\documentclass[11pt,a4paper]{article}
\usepackage[linesnumbered, lined, boxed]{algorithm2e}
\usepackage{amsfonts,amsmath,amssymb,amsthm}
\usepackage[margin=1in]{geometry}
\usepackage{graphics}
\usepackage{hyperref}
\usepackage{tikz-cd}

%\usepackage{lineno}
%\linenumbers

\newtheorem{theorem}{Theorem}[section]
\newtheorem{lemma}[theorem]{Lemma}
\newtheorem{proposition}[theorem]{Proposition}
\newtheorem{corollary}[theorem]{Corollary}
\newtheorem{conjecture}[theorem]{Conjecture}

\theoremstyle{definition}

\newtheorem{example}[theorem]{Example}

\theoremstyle{remark}

\numberwithin{equation}{section}

% caligraphic symbols
\def\calA{{\mathcal A}}  
\def\calD{{\mathcal D}}  
\def\calG{{\mathcal G}}  
  \def\calL{{\mathcal L}}
  \def\calO{{\mathcal O}}
  \def\calR{{\mathcal R}}
 \def\calT{{\mathcal T}} \def\calU{{\mathcal U}}

%--------------------

\newcommand{\heading}[1]{\smallskip\par\noindent{\bf #1}}

\newenvironment{packed_itemize}{
    \begin{itemize}
        \setlength{\itemsep}{1pt}
        \setlength{\parskip}{0pt}
        \setlength{\parsep}{0pt}
}{\end{itemize}}

% reduction env
\newcommand{\reduction}[4]{% {reduction_name}{input}{reduction}
	\medskip
	\begin{center}
	\fbox{\begin{tabular}{rp{12cm}}
	{\bf Reduction:}\enspace&#1\\
	{\bf Priority:}\enspace&#2\\
	{\bf Input:}\enspace&#3\\
	{\bf Output:}\enspace&#4\\
	\end{tabular}}
	\end{center}
	\medskip
}
%--------------

\def\cP{\hbox{\rm \sffamily P}}
\def\cNP{\hbox{\rm \sffamily NP}}

% Reduction names
\DeclareMathOperator{\lab}{{\bf Label}}
\DeclareMathOperator{\normal}{{\bf Normalize}}
\DeclareMathOperator{\loops}{{\bf Loops}}
\DeclareMathOperator{\dipoles}{{\bf Dipoles}}
\DeclareMathOperator{\largetype}{{\bf Large}}
\DeclareMathOperator{\periodic}{{\bf Periodic}}
\DeclareMathOperator{\aperiodic}{{\bf Aperiodic}}

\DeclareMathOperator{\id}{id}

\DeclareMathOperator{\Isoo}{{\rm Iso}^+}
\DeclareMathOperator{\Iso}{{\rm Iso}}
\DeclareMathOperator{\lcm}{lcm}

%-----------------

% notation
\DeclareMathOperator{\suc}{succ}
\def\Aut{{\rm Aut}}

\def\Sym{{\rm Sym}}
\def\Auto{{\rm Aut}^+}

\def\redeg{{\rm ref}}
\def\inv{^{-1}}

\def\dih{\mathbb{D}}
\def\cyc{\mathbb{Z}}

%---------

\title{Automorphism groups of maps in linear time}

\author{Ken-ichi Kawarabayashi
	\thanks{National Institute of Informatics, 2-1-2, Hitotsubashi, Chiyoda-ku, Tokyo, Japan. Email: \texttt{k\_keniti@nii.ac.jp}. Supported by JST ERATO Kawarabayashi Large Graph Project JPMJER1201 and by JSPS Kakenhi JP18H05291.
}
	\and
	Bojan Mohar
	\thanks{Department of Mathematics, Simon Fraser University, Burnaby, BC V5A 1S6. Email: \texttt{mohar@sfu.ca}. Supported in part by the NSERC Discovery Grant R611450 (Canada), by the Canada Research Chairs program, and by the Research Project J1-8130 of ARRS (Slovenia). On leave from IMFM, Department of Mathematics, University of Ljubljana.}
	\and
	Roman Nedela
	\thanks{Univeristy of West Bohemia, Pilsen, Czech Republic. Email: \texttt{nedela@savbb.sk}. Supported by Slovak Research and Development Agency under Grant No. APVV-15-0220 and by GA\v{C}R 20-15576S.}
	\and
	Peter Zeman
	\thanks{Department of Applied Mathematics, Faculty of Mathematics and Physics, Charles University, Prague, Czech Republic. Email: \texttt{zeman@kam.mff.cuni.cz}. Supported by GAUK 1224120 and GA\v{C}R 20-15576S.}
}

\date{}

\begin{document}

\maketitle

\begin{abstract}
A map is a $2$-cell decomposition of a closed compact surface, i.e., an embedding of a graph such that every face is homeomorphic to an open disc.
An automorphism of a map can be thought of as a permutation of the vertices which preserves the vertex-edge-face incidences in the embedding.
When the underlying surface is orientable, every automorphism of a map determines an angle-preserving homeomorphism of the surface.
While it is conjectured that there is no ``truly subquadratic'' algorithm for testing map isomorphism for unconstrained genus, we present a linear-time algorithm for computing the generators of the automorphism group of a map, parametrized by the genus of the underlying surface.
The algorithm applies a sequence of local reductions and produces a uniform map, while preserving the automorphism group.
The automorphism group of the original map can be reconstructed from the automorphism group of the uniform map in linear time.
We also extend the algorithm to non-orientable surfaces by making use of the antipodal double-cover.
\end{abstract}

\section{Introduction}
\label{sec:introduction}

By a \emph{topological map} we mean a $2$-cell decomposition of a closed compact surface, i.e., an embedding of a graph into a surface such that every face is homeomorphic to an open disc.
An \emph{automorphism} of a map is a permutation of the vertices which preserves the vertex-edge-face incidences.
In this paper, we study symmetries of maps, which are captured by their automorphism groups.

Topologically, we can think of symmetries of maps as symmetries of the underlying surface.
For example, if the underlying surface is orientable, then every map automorphism induces an orientation preserving homeomorphism of the surface. Hence, to every map there is a corresponding action of a discrete group on its underlying surface. Conversely, given a finite  discrete group of automorphisms $G$ acting on $S$, one can construct a vertex-transitive map $M$ such that $G$ is a subgroup of the automorphism group $\Aut(M)$ of $M$.
With some effort we can even force $G = \Aut(M)$.
Therefore, studying automorphisms groups of maps is equivalent to studying finite groups of automorphisms of surfaces.
This motivates the study of the automorphism groups of maps from the computational point of view.
Our main result reads as follows.

\begin{theorem}
\label{thm:main}
For a map $M$ on a surface of genus $g$, generators of the automorphism group of $M$ can be found in time $f(g)\|M\|$, where $f(g)$ is some computable function and $\|M\|$ is the size of the map.
\end{theorem}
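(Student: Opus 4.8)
The plan is to work with the combinatorial encoding of $M$ as a set $F$ of \emph{flags} equipped with three fixed-point-free involutions $\lambda_0,\lambda_1,\lambda_2$, where $\lambda_i$ swaps the two flags that agree in all but the $i$-th of the vertex/edge/face incidences. This encoding has size $\Theta(\|M\|)$; the group $\Aut(M)$ is exactly the centralizer of $\langle\lambda_0,\lambda_1,\lambda_2\rangle$ in $\Sym(F)$, and since $M$ is connected this action is semiregular, so a map automorphism is pinned down by the image of a single flag and $|\Aut(M)|$ divides $|F|=4|E|$. For a non-orientable surface one first passes to the antipodal double cover $\widetilde M$, which is orientable: $\Aut(M)$ is the subgroup of $\Aut(\widetilde M)$ commuting with the deck transformation, so it suffices to handle the orientable case and then take a centralizer, both in linear time.

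The heart of the argument is a suite of \emph{local reduction operators} --- $\loops$, $\dipoles$, $\normal$, $\largetype$, $\periodic$, $\aperiodic$ --- each of which detects a local reducible configuration (a loop, a bundle of parallel edges forming a dipole, a chain of $2$-valent vertices or $2$-gonal faces, a maximal ``periodic'' segment of the map, and so on) and rewrites $M$ into a strictly smaller map $M'$ of the same genus, recording a small amount of bookkeeping (a label, a multiplicity, a period length). I would prove, for each operator: (i) a full exhaustive application costs only $O(\|M\|)$, by amortizing the work against the flags that disappear; (ii) after $f(g)$ rounds, applied in a fixed priority order, no operator is applicable any more, so the whole reduction phase runs in time $f(g)\|M\|$ and terminates in a \emph{uniform} map $M^{\ast}$ (all vertices of one degree, all faces of one size, no reducible configuration left); and (iii) the recorded bookkeeping determines $\Aut(M)$ from $\Aut(M')$ by a linear-time transformation --- for instance, a contracted dipole of multiplicity $k$ contributes a $\cyc_k$ or $\dih_k$ factor to the relevant local stabilizer, and a reinserted periodic or $2$-valent chain forces the action on its flags.

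On the uniform map $M^{\ast}$ the monodromy group is transitive on flags, and $\Aut(M^{\ast})$ can be computed directly: a canonical flag-traversal produces a normal form in which the flags equivalent to a fixed base flag --- equivalently the orbit of that flag under $\Aut(M^{\ast})$ --- are recovered in a single pass, and the number of generators that need to be output is bounded in terms of $g$ (via the Riemann--Hurwitz bound on the number of branch points of the quotient orbifold $M^{\ast}/\Aut(M^{\ast})$). Finally I would compose everything: starting from generators of $\Aut(M^{\ast})$, undo the reductions in reverse order, at each step lifting the current group through the recorded bookkeeping; each lift is a linear-time map (re-attaching the cyclic/dihedral factors of the dipoles, the forced actions on periodic and $2$-valent chains, and the identifications performed by $\normal$), and after all of them one obtains generators of $\Aut(M)$.

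The main obstacle is steps (ii) and (iii): showing that these six operators, in the right priority order, always collapse $M$ --- after only $f(g)$ rounds --- to a genuinely uniform map while the automorphism group is tracked faithfully and, crucially, reversibly in linear time. The delicate points are the interplay between $\periodic$ and $\aperiodic$, which can create or absorb cyclic symmetry and must not be misled by accidental local periodicity, together with $\largetype$, and making sure all the recorded data is mutually consistent so that the final reconstruction is unambiguous. The base case is the other genuine difficulty: a uniform map can still be large, so one must extract $\Aut(M^{\ast})$ from a canonical form in truly linear time rather than by the naive quadratic flag-by-flag test.
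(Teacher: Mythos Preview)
Your outline follows the paper's architecture closely --- flag encoding, antipodal double cover for the non-orientable case, the named reductions, termination at a uniform map, then a base-case computation --- but two of your steps contain genuine gaps that would prevent the argument from going through as written.

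\medskip

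\textbf{The reconstruction mechanism.} Your item (iii) says that a contracted dipole of multiplicity $k$ ``contributes a $\cyc_k$ or $\dih_k$ factor to the relevant local stabilizer'', and more generally that the bookkeeping is used to \emph{lift} $\Aut(M')$ to $\Aut(M)$ by re-attaching local symmetry. This is the wrong picture. Because $\Aut(M)$ acts semiregularly on darts, every point stabilizer is trivial; there is no local $\cyc_k$ to re-attach, and an automorphism of $M'$ has at most one extension to $M$, not a coset's worth. What the paper actually does is attach \emph{labels} (integer-valued planted trees) to the surviving darts so that the reduced map $M'$ is a labeled map with $D'\subseteq D$ and $\Auto(M)_{\restriction D'}=\Auto(M')$ as groups, on the nose (Lemmas~\ref{lem:loops}--\ref{lem:periodic}). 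The labels do not record ``how much symmetry was lost'' --- they record enough of the deleted structure that no \emph{spurious} automorphism of $M'$ can appear. Reconstruction is then trivial: each generator of $\Auto(M')$ extends uniquely to $D$ by semiregularity, in linear time. Your wreath-product-flavoured description would overcount.

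\medskip

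\textbf{The base case for $g\le 1$.} You correctly flag that a uniform map can be large, but ``a canonical flag-traversal produces a normal form'' is not a solution. For $\chi<0$ the Hurwitz bound indeed caps the size of a uniform face-normal map by a function of $g$ (Proposition~\ref{prop:uniform_negative_characteristic}), so brute force works. For the sphere and the torus it does not: prisms, antiprisms, cycles, dipoles, bouquets, and all the toroidal $\{4,4\}$ and $\{6,3\}$ quotients are uniform and unbounded. The paper devotes Section~\ref{sec:uniform_sphere} to reducing the spherical infinite families to labeled cycles (with a bespoke linear-time cycle-automorphism routine), and Section~\ref{sec:uniform_torus} to a substantial algorithm that reduces a labeled homogeneous toroidal map to an orthogonal $\cyc_r\times\cyc_s$ grid and then computes the label-preserving subgroup via Lemma~\ref{lem:subgroups_zr_zs}. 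None of this falls out of a generic canonical form, and your proposal does not supply it.

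\medskip

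Two smaller points. First, ``uniform'' in the paper means all vertices share the same \emph{local type} (cyclic vector of incident face sizes), not that all faces have one size; your definition is the stronger notion the paper calls \emph{homogeneous}, and reaching it would require the extra Hopcroft--Wong-style reductions the paper deliberately avoids. Second, to pass from a $k$-valent map to a uniform one the paper re-runs $\largetype$/$\aperiodic$/$\periodic$ using \emph{refined} degree types (face degrees rather than vertex degrees) via the vertex--face incidence map; your sketch omits this layer, without which the reductions stall at $k$-valent but non-uniform maps.
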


There are two other algorithmic problems related to computing the generators of the automorphism group: the map isomorphism problem and the graph isomorphism problem. While the map isomorphism problem  can be solved in quadratic time (see Section~\ref{sec:preliminaries}), the complexity of the graph isomorphism problem is among the central problems of theoretical computer science for which the complexity is unknown. In what follows, we discuss the relations between these problems in detail.

\heading{Graph isomorphism problem (GI).}
Many algebraic, combinatorial, and topological structures can be encoded by (possibly infinite) graphs, while preserving the automorphism group~\cite{pultr_trnkova}.
The graph isomorphism problem is therefore of a special importance.
Also, in complexity theory, it is the prime candidate problem to be between $\cP$ and $\cNP$-complete problems.
If the graph isomorphism was $\cNP$-complete, then the polynomial-time hierarchy would collapse to its second level~\cite{schoning}.
This is considered to be an evidence that graph isomorphism is not $\cNP$-complete.
GI is polynomial-time equivalent to the problem of computing the generators of the automorphism group~\cite{mathon}.
Currently, the best upper bound for the complexity of these problems is due to Babai~\cite{babai_quasipolynomial}.
By fixing some natural parameters, it is often possible to obtain a polynomial-time algorithm for various restricted classes of graphs, e.g, graphs of bounded degree~\cite{luks_valence,schweitzer_valence}, bounded eigenvalue multiplicity~\cite{babai_eigenvalue}, bounded tree-width~\cite{pilipczuk_treewidth}, etc.

\heading{Graph isomorphism problem for graphs of bounded genus.} 
First observe that every graph can be embedded into a surface of sufficiently large genus, which also provides an important parametrization of all graphs. The first polynomial-time algorithm testing isomorphism of bounded genus graphs was given by Miller~\cite{miller}. Only recently, a linear-time algorithm was announced~\cite{kawarabayashi_genus}.
This already implies that the generators of the automorphism group of a graph of bounded genus can be computed in polynomial time.
However, an interesting question, which remained open, is whether linear time can be achieved here as well.
The first unavoidable step is to show that for a map $M$ on a fixed surface of genus $g$ it is possible to compute the automorphism group in linear time, which is our main result.

\heading{Map isomorphism problem.}
As already noted, the map isomorphism problem can be solved easily in quadratic time.
For the spherical maps, the quadratic bound on the complexity of the isomorphism problem was first improved in~\cite{HT1973} to $\calO(\|M\|\log \|M\|)$.
In 1974, Hopcroft and Wong in~\cite{wong} described an algorithm solving the problem for the spherical maps in linear time.
Existence of a linear-time algorithm solving the map isomorphism for maps of bounded genus was announced in~\cite{KM2008}.

It turns out that a small modification of our algorithm for computing the generators of the automorphism group of a map also gives a linear-time algorithm solving the map isomorphism problem for maps of bounded genus. In fact the set of all isomorphisms $M_1\to M_2$ can be expressed as a composition  $\psi\cdot\Aut(M_1)$ where $\psi: M_1\to M_2$ is an isomorphism. It follows that our main result also gives a description of all isomorphisms $M_1\to M_2$.
On the other hand, a linear-time algorithm for map isomorphism yields only a quadratic-time algorithm for determining the automorphism group. In this sense, our results should be considered as a non-trivial generalization of \cite{wong} and \cite{KM2008}. 
   
Since neither~\cite{wong}, nor \cite{KM2008}, presents essential details of the algorithm, our second aim is to fill in this gap. Although one of the basic ideas of our algorithm is similar to that in~\cite{wong} and~\cite{KM2008}, it should be stressed that our
algorithm is not just a technical improvement of the aforementioned algorithms. It requires new ideas and elaborate analysis about map automorphisms. 

\heading{Polyhedral graphs.}
The interaction among the above problems can be nicely demonstrated if we are restricted to polyhedral graphs.
Recall that by Steinitz' Theorem, a graph is polyhedral if and only if it is planar and $3$-connected.
Whitney's theorem~\cite{whitney} states that $3$-connected planar graphs have (combinatorially) unique embeddings in the sphere. Consequently, the isomorphism problem for $3$-connected planar graphs is equivalent to the isomorphism problem of spherical maps and they can be solved in linear time by the Hopcroft-Wong algorithm~\cite{wong}. 

The automorphism groups of $3$-connected planar graphs as abstract groups are well-understood.
They are exactly the spherical groups~\cite{Ma1971}, i.e., finite subgroups of the group of $3\times 3$ orthogonal matrices. However, it is not obvious how to modify the algorithm of Hopcroft and Wong~\cite{wong} to compute the generators of the automorphism group.
In fact, Colbourn and Booth~\cite{colbourn_booth} posed this as an open problem. 
Our main result,
Theorem~\ref{thm:main}, solves this in much greater generality and our approach provides a new insight into the algorithm of Hopcroft and Wong~\cite{wong}. 

\heading{Simultaneous conjugation problem.}
The problems of testing isomorphism of maps and computing the generators of the automorphism group of a map are surprisingly related to the problem of \emph{simultaneous conjugation}.
In the latter problem, the input consits of two sets of permutations $\alpha_1,\dots,\alpha_d$ and $\beta_1,\dots,\beta_d$ on the set $\{1,\dots,n\}$, each of which generates a transitive subgroup of the symmetric group.
The goal is to find a permutation $\gamma$ such that
$\gamma\alpha_i\gamma\inv = \beta_i$, for $i=1,\dots,d$.
Let us observe that this problem is a generalization of the map isomorphism problem.
If $\alpha_1$ and $\beta_1$ are involutions, $d = 2$, and the set $\{1,\dots,n\}$ is identified with the set of darts of a map on a surface (see Section \ref{sec:preliminaries} for definitions), then this problem is exactly the map isomorphism problem.
If further $\alpha_1 = \beta_1$ and $\alpha_2 = \beta_2$, we get the map automorphism problem.

Since mid 1970s it has been known that the simultaneous conjugation problem can be solved in time $\calO(dn^2)$~\cite{fontet_simultaneous, hoffmann_simultaneous}.
A faster algorithm, with running time $\calO(n^2\log{d}/\log{n} + dn\log{n})$, was found only recently~\cite{rok_simultaneous}.
This implies an $\calO(n^2/\log{n})$ algorithm for the isomorphism and automorphism problems on maps of unrestricted genus.
In complexity theory, this is not considered to be a ``truly subquadratic'' algorithm.
This motivates the following conjecture.

\begin{conjecture}
There is no $\varepsilon > 0$ for which there is an algorithm for testing isomorphism of maps of unrestricted genus in time $\calO(n^{2-\varepsilon})$.
\end{conjecture}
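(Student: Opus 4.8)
The statement is a conjecture, so a ``proof'' can only mean a conditional fine-grained lower bound: the plan is to exhibit a reduction showing that any algorithm solving map isomorphism in time $\calO(n^{2-\varepsilon})$ for some $\varepsilon>0$ would refute a standard hardness hypothesis --- most naturally the Strong Exponential Time Hypothesis, through the Orthogonal Vectors problem, or the $3$SUM conjecture, through the offline set-disjointness problem. Taking the first route, I would start from an Orthogonal Vectors instance: families $A=\{a_1,\dots,a_n\}$ and $B=\{b_1,\dots,b_n\}$ of vectors in $\{0,1\}^d$ with $d=\Theta(\log n)$, and build in near-linear time two maps $M_1=M_1(A)$ and $M_2=M_2(B)$, each of size $\calO(n\,\mathrm{poly}(d))$, such that $M_1\cong M_2$ if and only if some $a_i$ is orthogonal to some $b_j$. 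The unrestricted genus is the one resource the construction can spend freely: the gadgets are allowed to be topologically as complicated as needed.

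\textbf{The gadgetry.} Encode each vector $a_i$ as a small connected submap $G_i$, e.g.\ a ``necklace'' with one bead per coordinate, the $k$-th bead carrying a distinguished local configuration exactly when the $k$-th entry of $a_i$ is $1$; encode each $b_j$ similarly as $H_j$. The heart of the reduction is an \emph{overlap gadget} that glues a copy of $G_i$ to a copy of $H_j$ by interleaving the two necklaces and creating a face of a prescribed size precisely at the coordinates where both vectors are $1$, so that $\langle a_i,b_j\rangle=0$ becomes the purely combinatorial condition that the overlap gadget is isomorphic to a fixed ``zero-template''. Finally I would assemble $M_1$ and $M_2$ out of these pieces together with ``junk'' gadgets that are rigid enough to be matched only to one another, arranged so that a global isomorphism $M_1\to M_2$ exists iff a single ``active'' slot can be filled by an orthogonal pair while every remaining slot is absorbed by junk. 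The $3$SUM variant, starting from ``given $n$ sets over $[n]$, is some pair disjoint?'', would proceed analogously and is arguably a more natural source, since that problem already has the flavour of searching for one good pair.

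\textbf{The hard part.} The main obstacle --- and the reason this is a conjecture rather than a theorem --- is the clash between the \emph{rigid, global} character of isomorphism and the \emph{existential} character of the hard problems. An isomorphism between disjoint unions of small gadgets is merely a bipartite matching of gadget types, solvable in near-linear time by sorting the types, so all the hardness must be carried by the way the gadgets are wired together, yet the wiring must not inadvertently trivialise the instance or inflate its size. Worse, isomorphism is an ``equality'' test, whereas we need a gadget whose isomorphism type detects the \emph{asymmetric} relation $\langle a_i,b_j\rangle=0$ between two \emph{different} vectors, and we need the global automorphism structure tamed so that the only freedom an isomorphism enjoys is ``choose one good pair''. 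I do not know how to carry this out; indeed no superlinear lower bound, conditional or unconditional, is presently known for any natural map- or graph-isomorphism problem, which is exactly the gap the conjecture records.
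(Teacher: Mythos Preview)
The statement is a \emph{conjecture}, and the paper does not prove it. It is stated as a motivating open problem; the paper even singles out as an ``interesting open subproblem'' the weaker task of proving any conditional truly superlinear lower bound, and the only evidence offered is the $\Omega(dn\log n)$ communication-complexity bound and the $\Omega(n\log n)$ decision-tree bound for simultaneous conjugation. There is therefore no proof in the paper to compare your proposal against.

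As for the proposal itself: you correctly frame it as a plan for a \emph{conditional} fine-grained lower bound via OV (under SETH) or $3$SUM, and you are candid in the final paragraph that you do not know how to make the reduction go through. The obstacles you identify --- that isomorphism is a rigid, global equality test while OV/$3$SUM ask an existential question, that disjoint unions of gadgets collapse to type-matching, and that the asymmetric predicate $\langle a_i,b_j\rangle=0$ must somehow be encoded as an isomorphism-type condition --- are the real difficulties, and no one currently knows how to overcome them. So what you have written is a reasonable discussion of \emph{why} the conjecture is open, not a proof; it would be appropriate as a remark or as motivation, but it should not be presented as a proof attempt that merely needs filling in.
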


An interesting open subproblem is to prove a conditional ``truly superlinear'' lower bound for any of the mentioned problems.
There has been some progress in the direction of providing a lower bound.
In particular it is known that the communication complexity of the simultaneous conjugation problem is $\Omega(dn\log(n))$, for $d > 1$, and that under the decision tree model the search version of the simultaneous conjugation problem has lower bound of $\Omega(n\log{n})$ \cite{simultaneous_lower_bound}.
%\footnote{Personal communication with Andrej Brodnik, Aleksander Malni\v{c}, and Rok Po\v{z}ar.}

\medskip

\heading{Novelty of our approach.}
We first deal with oriented maps on orientable surfaces.
The idea is to apply a series of local elementary reductions, which reduce the size of the map, but preserve the automorphism group.
Each elementary reduction modifies a particular part of a map using vertex deletions and edge contractions.
All the vanishing structural information necessary to reconstruct the automorphism group is in each step preserved by using special labels. When no further reductions are possible, we say that the map is \emph{irreducible}. Then the generators of the automorphism group of the input map can be reconstructed from the generators
of the automorphism group of the associated irreducible map in linear time.
An important technical aspect is that we use an algebraic description of maps which is very convenient when working with automorphisms.

In~\cite{wong} and~\cite{KM2008} the approach is to first reduce an input map to a $k$-valent map, for some $k$. To proceed further, they introduce another set of reductions which apply only to $k$-valent maps and produce an irreducible \emph{homogeneous map}, where every vertex is of degree $k$ and every face is of degree $\ell$. This latter set of reductions is hard to formalize, and it is unclear whether
they preserve the map automorphisms.
In our work, we relax the set of irreducible maps to \emph{uniform maps}, where the cyclic vector of face degrees at each vertex is the same.
This allows us to keep the simpler and more manageable set of reductions.
For surfaces of fixed negative Euler characteristics this is sufficient, since the size of a uniform irreducible map is bounded by a function of the genus. If the Euler characteristics is non-negative, we reduce the problem to spherical cycles in case $g=0$, or to toroidal triangular, or quadrangular grids in case $g=1$.
To solve the problem for these highly symmetric maps
we introduce special algorithms. The linear algorithm for the toroidal case is highly non-trivial and contains a lot of new ideas. See Section \ref{sec:uniform_torus} for details.

Another conceptual contribution is the periodic reduction (Section~\ref{sec:reductions}), which appears in~\cite{wong} only in its simplest instance and it does not appear in \cite{KM2008}.
This concept is of crucial importance, and in fact, this is exactly the tool which allows to jump from the spherical case to a general surface.
Finally, an essential contribution is the use  of the antipodal double-cover to extend the main result to maps on non-orientable surfaces.

\heading{Linear time bound.}
Obtaining a linear bound on the complexity of our algorithm is delicate and a lot of details need to be taken into account. In particular, it is necessary to prove that the number of elementary reductions is linear and that the time spent on every elementary reduction is proportional to the size of the part of the map removed by the reduction. Particular attention has to be paid to the management and computation of labels. For the sphere and the torus there are infinite classes of irreducible maps and special algorithms must be designed and their linearity established.

%\heading{Contributions of this paper.} We summarize the main conceptual and technical contributions of this paper.
%\begin{itemize}
%\item
%We introduce periodic operation.
%\item
%We introduce the concept of refined degree type.
%\item
%We terminate on uniform maps.
%\item
%We use algebraic descpription of maps.
%\item
%We use antipodal double-cover.
%\end{itemize}

\heading{Structure of the paper.}
In Section~\ref{sec:preliminaries}, we give the necessary background in the theory of maps.
Most importantly, we recall a purely algebraic definition of a map, which defines it as a permutation group generated by three fixed-point-free involutions.
After this preparation, we give a more detailed overview of the whole algorithm in Section~\ref{sec:overview}.
Sections~\ref{sec:reductions}--\ref{sec:nonorientable} form the technical part of the paper. 
In Section~\ref{sec:reductions}, we describe all the elementary reductions, in Section~\ref{sec:uniform}, we deal with the irreducible maps, and in Section~\ref{sec:nonorientable} we consider maps on non-orientable surfaces.
We summarize the algorithm and analyze its complexity in Section~\ref{sec:labels}.

\section{Preliminaries}
\label{sec:preliminaries}

A \emph{map} $M$ is an embedding $\iota\colon X\to S$ of a connected graph $X$ to a surface $S$ such that every connected component of $S \setminus \iota(X)$ is homeomorphic to an open disc.
The connected components are called \emph{faces}.
By $V(M)$, $E(M)$, and $F(M)$ we denote the sets of vertices, edges, and faces of $M$, respectively.
We put $v(M) := |V(M)|$, $e(M) := E(M)$, and $f(M) := |F(M)|$.
An \emph{automorphism of a map} is a permutation of the vertices which preserves the vertex-edge-face incidences.

Recall that connected closed compact surfaces are characterized by two invariants: orientability and the Euler characteristic $\chi$.
For the orientable surfaces, the latter can be replaced by the \emph{(orientable) genus} $g\geq 0$, which is the number of tori in the connected sum decomposition of the surface, and for the non-orientable surfaces by the \emph{non-orientable genus} $\gamma \geq 1$, which is the number of real projective planes in the connected sum decomposition of the surface.
The following is well-known.

\begin{theorem}[Euler-Poincar\'{e} formula]
\label{thm:euler-poincare}
Let $M$ be a map on a surface $S$. Then
$$
v(M) - e(M) + f(M) = \chi(S) =
\left\{
  \begin{array}{ll}
    2 - 2g, & \hbox{if $S$ has genus $g$;} \\
    2 - \gamma, & \hbox{if $S$ has non-orientable genus $\gamma$.}
  \end{array}
\right.
%\begin{cases}
%2 - 2g \quad \text{if } S \text{ has genus } g,\\
%2 - \gamma \quad \text{if } S \text{ has non-orientable genus } \gamma.
%\end{cases}
$$
\end{theorem}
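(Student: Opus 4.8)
\emph{Proof proposal.} The plan is to establish the two equalities in turn: first, that the integer $v(M)-e(M)+f(M)$ depends only on the surface $S$ and not on the chosen map $M$, and then, that this common value equals $2-2g$ in the orientable case and $2-\gamma$ in the non-orientable case.

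For the first equality, I would observe that a map $M$ on $S$ makes $S$ into a finite CW-complex: the $0$-cells are the vertices, the $1$-cells are the open edges of $X$, and the $2$-cells are the faces, each attached to the $1$-skeleton $X$ along its boundary walk. Here the hypothesis that every face is homeomorphic to an open disc is exactly what is needed for the faces to be genuine $2$-cells. The Euler characteristic of a finite CW-complex, defined as the alternating sum of the numbers of cells of each dimension, equals $\sum_{i}(-1)^i\dim_{\mathbb{Q}}H_i(S;\mathbb{Q})$, hence is a homeomorphism invariant of the underlying space. Therefore $v(M)-e(M)+f(M)$ does not depend on $M$; write $\chi(S)$ for its value. (One can also avoid homology and argue combinatorially by induction on $e(M)$: if some edge is incident with two distinct faces, delete it, which merges those two faces into a disc and leaves a map on the same $S$ with $v-e+f$ unchanged; otherwise contract any non-loop edge, again leaving $v-e+f$ unchanged; the base case $e(M)=0$ forces $X$ to be a single vertex and $S$ the sphere, with $v-e+f=1-0+1=2$.)

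For the second equality, I would invoke the classification of closed compact surfaces: $S$ is homeomorphic either to the sphere, or to a connected sum of $g\geq 1$ tori, or to a connected sum of $\gamma\geq 1$ projective planes. Since $\chi(S)$ is a topological invariant it suffices to compute it on one convenient map for each type. On the sphere the one-vertex map has $v=1$, $e=0$, $f=1$, so $\chi=2=2-2\cdot 0$. The connected sum of $g$ tori is obtained from a $4g$-gon by the boundary identification $a_1b_1a_1\inv b_1\inv\cdots a_gb_ga_g\inv b_g\inv$; all polygon corners are identified to a single vertex, leaving $v=1$, $e=2g$, $f=1$, so $\chi=1-2g+1=2-2g$. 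Likewise the connected sum of $\gamma$ projective planes comes from a $2\gamma$-gon with identification $a_1a_1a_2a_2\cdots a_\gamma a_\gamma$, again with one vertex, $\gamma$ edges and one face, so $\chi=1-\gamma+1=2-\gamma$. Combining this with the first equality yields the stated formula.

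The main obstacle is concentrated entirely in the first equality: showing that $v-e+f$ is genuinely independent of the map. On the topological route this is the standard but substantial fact that the Euler characteristic is a homeomorphism (indeed homotopy) invariant, together with the mild check that a map really does determine a CW structure --- one must verify that the closure of a face, with its boundary walk as attaching map, is a bona fide characteristic map of a $2$-cell, which follows from the open-disc hypothesis and compactness of $S$. On the combinatorial route the delicate point is instead the terminal case of the induction, where no edge borders two distinct faces and every edge is a loop: there $X$ is a bouquet, and one must identify the configuration with one of the canonical polygon maps above (equivalently, argue that such a terminal map has exactly one face). The second equality, by contrast, is immediate once the classification of surfaces is taken for granted.
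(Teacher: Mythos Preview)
The paper does not prove this theorem at all: it is introduced with ``The following is well-known'' and stated without proof as background material from topology. So there is no proof in the paper to compare your proposal against.

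That said, your proposal is a standard and essentially correct route to the result. The homological argument in your first paragraph is the clean way to get invariance of $v-e+f$, and the polygon computations in the second paragraph are correct. One small caveat on your parenthetical combinatorial alternative: the sentence ``the base case $e(M)=0$ forces $X$ to be a single vertex and $S$ the sphere'' is not a base case for general $S$, since on a surface of positive genus you can never reduce to zero edges while keeping a $2$-cell embedding. You recognise this yourself in the final paragraph when you describe the genuine terminal configurations (one vertex, all loops, one face), but as written the parenthetical is misleading---it proves invariance only for spherical maps. If you want the combinatorial route to stand on its own you should either run the induction down to a canonical one-vertex, one-face map on each surface (and then you are effectively re-deriving the polygon models), or else simply drop the parenthetical and rely on the CW/homology argument, which already does the job.
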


In what follows, we give an algebraic description of a map, which defines it as a group generated by three fixed-point-free involutions acting on \emph{flags}.
A flag is a triple representing a vertex-edge-face incidence.
The involutions are simply instructions on how to join the flags together to form a map.
There are several advantages: (i) in such a form, maps can be easily passed to an algorithm as an input, (ii) verifying whether a mapping is an automorphism reduces to checking several commuting rules, and (iii) group theory techniques can be applied to obtain results about maps.
For more details see for example~\cite{jones_singerman} and~\cite[Section 7.6]{GY2013}.

\heading{Oriented maps.}
Even though our main concern are all maps, a large part of our algorithm deals with maps on orientable surfaces, where the algebraic description is simpler.
An \emph{oriented map} is a map on an orientable surface with a fixed global clockwise orientation.
Every oriented map can be combinatorially described as a triple $(D,R,L)$. Here, $D$ is the set of \emph{darts}, where each edge gives rise to two darts.
The permutation $R\in \Sym(D)$, called  \emph{rotation}, is the product $R=\Pi_{v\in V} R_v$, where each $R_v$ cyclically permutes the darts based at $v\in V$, following the chosen clockwise orientation around $v$.
The \emph{dart-reversing involution} $L\in \Sym(D)$ is an involution of $D$ that, for each edge, swaps the two oppositely directed darts arising from the edge.

Formally, a \emph{combinatorial oriented map} is any triple $M=(D,R,L)$, where $D$ is a finite non-empty set of \emph{darts}, $R$ is any permutation of darts, $L$ is a fixed-point-free involution of $D$, and the group $\langle R,L\rangle\leq \Sym(D)$ is transitive on $D$.
By the size $\|M\|$ of the map, we mean the number of darts $|D|$.
We require transitivity because we consider connected maps which correspond to decompositions of surfaces.

The group $\langle R,L\rangle$ is called the \emph{monodromy group} of $M$.  
The vertices, edges, and faces of $M$ are in one-to-one correspondence with the cycles of $R$, $L$, $R^{-1}L$, respectively.
By ``a dart $x$ is incident to a vertex $v$'' we mean that $x\in R_v$.
Similarly, ``$x$ is incident to a face $f$'' means that $x$ belongs to the boundary walk of $f$ defined by the respective cycle of $R^{-1}L$.
Note that each dart is incident to exactly one face. 
For convenience, we frequently use a shorthand notation $x\inv=Lx$, for $x\in D$. 
The \emph{dual} of an oriented map $M=(D,R,L)$ is the oriented map $M^*=(D,R^{-1}L,L)$.

Apart from standard map theory references, we need to introduce labeled maps.
A \emph{planted tree} is a rooted tree embedded in the plane, i.e., by permuting the children of a node we get different trees.
We say that a planted tree is \emph{integer-valued} if each node is assigned some integer.
A \emph{dart-labeling} of an oriented map $M=(D,R,L)$ is a mapping $\ell\colon D \to \calT$, where $\calT$ is the set of rooted integer-valued planted trees.
A \emph{labeled oriented map} $M$ is a $4$-tuple $(D, R, L, \ell)$.
The \emph{dual map} is the map $M^*$  defined as $M^*= (D, R^{-1}L, L, \ell)$.

Two labeled oriented maps $M_1 = (D_1,R_1,L_1,\ell_1)$ and $M_2 = (D_2,R_2,L_2,\ell_2)$ are \emph{isomorphic}, in symbols $M_1\cong M_2$, if there exists a bijection $\psi\colon D_1\to D_2$, called an \emph{(orientation-preserving) isomorphism} from $M_1$ to $M_2$, such that
\begin{equation}
\label{eq:iso}
\psi R_1 = R_2\psi,\quad \psi L_1 = L_2 \psi, \quad \text{and}\quad \ell_1 = \ell_2\psi.
\end{equation}
The \emph{set of all (orientation-preserving) isomorphisms} from $M_1$ to $M_2$ is denoted by $\Isoo(M_1,M_2)$.
The \emph{(orientation-preserving) automorphism group} of $M$ is the set $\Isoo(M,M)$, and we denote it by $\Auto(M)$.
Algebraically, $\Auto(M)$ is just the \emph{centralizer} of the monodromy group $\langle R,L\rangle$ in $\Sym(D)$, the group of all permutations in $\Sym(D)$ that commute with those in $\langle R,L\rangle$.
Note that, in general, the permutations $R$ and $L$ are not automorphisms of the map or of the underlying graph.
The following statement, well-known for unlabeled maps, extends easily to labeled maps.

\begin{theorem}
Let $M_1$ and $M_2$ be labeled oriented maps with sets of darts $D_1$ and $D_2$, respectively.
For every $x \in D_1$ and every $y \in D_2$, there exists at most one isomorphism $M_1 \to M_2$ mapping $x$ to $y$.
In particular, $\Aut^+(M_1)$ is fixed-point-free (semiregular) on $D_1$.
\end{theorem}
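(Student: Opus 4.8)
The plan is to use transitivity of the monodromy group $\langle R_1,L_1\rangle$ on $D_1$ together with the intertwining relations in~\eqref{eq:iso}. The underlying point is that an isomorphism is an \emph{equivariant} map, so once its value is fixed on a single dart, equivariance forces the value on the whole orbit under $\langle R_1,L_1\rangle$, and that orbit is all of $D_1$.

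Concretely, I would first record the following observation. Let $\psi\in\Isoo(M_1,M_2)$ and let $w=s_k\cdots s_1$ be any word with each $s_i\in\{R_1,L_1,R_1\inv\}$; write $\hat w=\hat s_k\cdots\hat s_1$ for the word obtained by replacing $R_1,L_1,R_1\inv$ by $R_2,L_2,R_2\inv$. Then $\psi(wd)=\hat w\,\psi(d)$ for every $d\in D_1$. This follows by induction on $k$: the case $k=0$ is trivial, and the inductive step is a single application of $\psi R_1=R_2\psi$, of $\psi L_1=L_2\psi$, or of the consequence $\psi R_1\inv=R_2\inv\psi$. Since $\langle R_1,L_1\rangle$ is transitive on $D_1$, for any prescribed $x\in D_1$ and any $d\in D_1$ there is such a word $w$ with $d=wx$.

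Now fix $x\in D_1$, $y\in D_2$, and suppose $\psi,\psi'\in\Isoo(M_1,M_2)$ both send $x$ to $y$. Given an arbitrary $d\in D_1$, pick a word $w$ as above with $d=wx$. Then by the observation
$$\psi(d)=\hat w\,\psi(x)=\hat w\,y=\hat w\,\psi'(x)=\psi'(d),$$
so $\psi=\psi'$, which is the claimed uniqueness. Note that the dart-labeling plays no role in this argument: uniqueness already holds among bijections satisfying merely the first two conditions of~\eqref{eq:iso}, and imposing the label condition $\ell_1=\ell_2\psi$ can only shrink $\Isoo(M_1,M_2)$ further.

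For the ``in particular'' clause, apply the uniqueness statement with $M_2=M_1=:M$ and $y=x$. The identity $\id_{D_1}$ lies in $\Auto(M)$ and fixes $x$, so by uniqueness any $\psi\in\Auto(M)$ with $\psi(x)=x$ equals $\id_{D_1}$. Hence no non-identity automorphism fixes a dart, i.e.\ $\Auto(M)$ acts freely, and therefore semiregularly, on $D_1$. I do not expect a substantive obstacle here; the only points requiring care are stating the word-induction cleanly and handling $R_1\inv$ when invoking transitivity, which is why I would close the generating set under inverses from the outset (equivalently, use $R_1\inv=R_1^{m-1}$ on each $m$-cycle of $R_1$, the set $D_1$ being finite).
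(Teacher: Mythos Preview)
Your argument is correct and is precisely the standard proof: transitivity of the monodromy group together with the intertwining relations~\eqref{eq:iso} force an isomorphism to be determined by the image of a single dart, and the semiregularity of $\Auto(M_1)$ follows by specializing to $M_2=M_1$ and $y=x$. The paper itself does not supply a proof of this theorem; it merely remarks that the statement is well-known for unlabeled maps and extends easily to labeled ones, which is exactly what you observe when noting that the label condition $\ell_1=\ell_2\psi$ plays no role in the uniqueness argument.
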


\begin{corollary}
Let $M_1$ and $M_2$ be labeled oriented maps with sets of darts $D_1$ and $D_2$, respectively.
If $x \in D_1$ and $y\in D_2$, then it can be checked in time $\calO(|D_1|+|D_2|)$ whether there is an isomorphism mapping $x$ to $y$.
\end{corollary}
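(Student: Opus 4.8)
The plan is to exploit the uniqueness statement of the preceding theorem: there is at most one isomorphism $M_1\to M_2$ sending $x$ to $y$, so it suffices to reconstruct this unique candidate and then verify it. Reconstruction is possible because $\langle R_1,L_1\rangle$ is transitive on $D_1$. Indeed, if $\psi$ is the sought isomorphism, then $\psi R_1=R_2\psi$ and $\psi L_1=L_2\psi$ force $\psi(R_1 d)=R_2\psi(d)$ and $\psi(L_1 d)=L_2\psi(d)$ whenever $\psi(d)$ is already known. So we set $\psi(x):=y$ and run a breadth-first search on $D_1$: from a dart $d$ whose image $\psi(d)$ has been assigned, we enqueue $R_1 d$ and $L_1 d$ with the forced values $R_2\psi(d)$ and $L_2\psi(d)$. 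Since $R_1$ has finite order, $R_1^{-1}$ is a power of $R_1$, so applying only $R_1$- and $L_1$-steps already reaches the whole orbit of $\langle R_1,L_1\rangle$, i.e.\ all of $D_1$, and every dart eventually gets a value. If some dart is ever assigned two different values we stop and answer ``no'', which is correct: any isomorphism with $x\mapsto y$ would be consistent with all the forced assignments.

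If the search completes without a conflict we obtain a well-defined map $\psi\colon D_1\to D_2$ with $\psi R_1=R_2\psi$ and $\psi L_1=L_2\psi$. Then $\psi(D_1)$ is a nonempty subset of $D_2$ invariant under $R_2$ and $L_2$, hence equals $D_2$ by transitivity of $\langle R_2,L_2\rangle$; combined with the preliminary check $|D_1|=|D_2|$ (if it fails, there is trivially no isomorphism), this makes $\psi$ a bijection. It remains to test the label condition $\ell_1=\ell_2\psi$, i.e.\ whether $\ell_1(d)$ and $\ell_2(\psi(d))$ are the same rooted integer-valued planted tree for every $d\in D_1$; equality of two planted integer-valued trees is decidable by a simultaneous recursive traversal in time linear in their sizes. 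If all these equalities hold, $\psi\in\Isoo(M_1,M_2)$ maps $x$ to $y$; otherwise, by the uniqueness theorem, no isomorphism does.

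For the running time, the breadth-first search visits each dart of $D_1$ a constant number of times and spends $\calO(1)$ per step on the structural relations, so reconstruction and the bijectivity bookkeeping cost $\calO(|D_1|+|D_2|)$; the label verification adds time linear in the total number of nodes among the trees $\ell_1(d)$ and $\ell_2(d')$, which is part of the input size. The main point requiring care is this last accounting — being explicit about what ``size'' means once labels are present — together with the observation that the propagation is guaranteed to succeed in a single pass, with no backtracking, precisely because the monodromy group is transitive and the candidate isomorphism, when it exists, is unique.
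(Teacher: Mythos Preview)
Your argument is correct and is exactly the intended one: the paper states this corollary without proof, treating it as an immediate consequence of the preceding uniqueness theorem, and the BFS propagation you describe is the standard way to make that consequence explicit. Your observation about the label-comparison cost is well taken; the paper's bound $\calO(|D_1|+|D_2|)$ is literally accurate only once labels have been replaced by integers as in Section~\ref{sec:labels}, and you are right to flag that the honest bound at this stage is linear in the full input including the label trees.
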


\heading{Chirality.}
The \emph{mirror image} of an oriented map $M=(D,R,L)$ is the oriented map $M\inv=(D,R^{-1},L)$.
Similarly, the \emph{mirror image} of labeled oriented map $M=(D,R,L,\ell)$ is the map $M\inv = (D, R\inv, L, \ell^-)$, where $\ell^-(x)$ is the mirror image of $\ell(x)$ for each $x\in D$.

An oriented map $M$ is called \emph{reflexible} if $M\cong M\inv$.
Otherwise the maps $M$ and $M\inv$ form a \emph{chiral pair}.
For example, all the Platonic solids are reflexible.
The set of \emph{all isomorphisms} from $M_1$ to $M_2$ is defined as $\Iso(M_1,M_2) := \Isoo(M_1,M_2)\cup\Isoo(M_1,M_2\inv)$.
Similarly, we put $\Aut(M) := \Iso(M,M)$.

\medskip

\heading{Maps on all surfaces.}
Let $M$ be a map on any, possibly non-orientable, surface.
In general, a \emph{combinatorial non-oriented map} is a quadruple $(F,\lambda,\rho,\tau)$, where $F$ is a finite non-empty set of \emph{flags}, and $\lambda,\rho,\tau \in \Sym(F)$ are fixed-point-free\footnote{It is possible to extend the theory to maps on surfaces with boundaries by allowing fixed points of $\lambda,\rho,\tau$.} involutions such that $\lambda\tau = \tau\lambda$ and the group $\langle\lambda,\rho,\tau\rangle$ acts transitively on $F$.
By the \emph{size} $\|M\|$ of the map $M$ we mean the number of flags $|F|$.

Each flag corresponds uniquely to a vertex-edge-face incidence triple $(v,e,f)$.
Geometrically, it can be viewed as the triangle defined by $v$, the center of $e$, and the center $f$.
The group $\langle\lambda,\rho,\tau\rangle$ is called the \emph{non-oriented monodromy group} of $M$.
The vertices, edges, and faces of $M$ correspond uniquely to the orbits of $\langle\rho,\tau\rangle$, $\langle\lambda,\tau\rangle$, and $\langle\rho,\lambda\rangle$, respectively.
Similarly, an \emph{isomorphism} of two non-oriented maps $M_1$ and $M_2$ is a bijection $\psi\colon F\to F$ which commutes with $\lambda,\rho,\tau$.

The even-word subgroup $\langle\rho\tau,\tau\lambda\rangle$ has index at most two in the monodromy group of $M$.
If it is exactly two, the map $M$ is called \emph{orientable}.
For every oriented map $(D,R,L)$ it is possible to construct the corresponding non-oriented map $(F,\lambda,\rho,\tau)$.
Conversely, from an orientable non-oriented map $(F,\lambda,\rho,\tau)$ it is possible to construct two oriented maps $(D,R,L)$ and $(D,R\inv,L)$.

\heading{Test of orientability.}
For a non-oriented map $M = (F,\lambda,\rho,\tau)$, it is possible to test in linear time if $M$ is orientable~\cite{tucker_gross,mohar_thomassen}.
The \emph{barycentric subdivision} $B$ of $M$ is constructed by placing a new vertex in the center of every edge and face, and then joining the centers of faces with the incident vertices and with the center of the incident edges.
The dual of $B$ is $3$-valent map, i.e., every vertex is of degree $3$.

\begin{theorem}
\label{thm:orientable}
A map $M = (F,\lambda,\rho,\tau)$ is orientable if and only if the underlying $3$-valent graph of the dual of the barycentric subdivision of $M$ is bipartite.
\end{theorem}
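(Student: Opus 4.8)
The graph in question, which I will call $B^{*}$, has the flag set $F$ as its vertex set, and its edges are the three perfect matchings $\{\,\{x,\lambda x\}:x\in F\,\}$, $\{\,\{x,\rho x\}:x\in F\,\}$, $\{\,\{x,\tau x\}:x\in F\,\}$; this is simply the dual of the barycentric subdivision rewritten in terms of the flag description (each flag‑triangle is joined to the three flag‑triangles obtained by reflecting it across its three sides). It is $3$‑valent, and it is connected because $\langle\lambda,\rho,\tau\rangle$ acts transitively on $F$. The plan is to prove the two implications separately. The common thread is the trivial remark that a proper $2$‑colouring of $B^{*}$ is the same thing as a map $c\colon F\to\{0,1\}$ with $c(\lambda x)\neq c(x)$, $c(\rho x)\neq c(x)$, and $c(\tau x)\neq c(x)$ for every $x\in F$, and that such a $c$ exists iff $B^{*}$ is bipartite.

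For the direction ``$B^{*}$ bipartite $\Rightarrow$ $M$ orientable'', I would fix such a colouring $c$. Then $\rho\tau$ and $\tau\lambda$ both leave $c$ invariant (two colour‑swaps cancel), so the even‑word subgroup $G^{+}=\langle\rho\tau,\tau\lambda\rangle$ preserves the partition $\{c^{-1}(0),c^{-1}(1)\}$ of $F$ into two nonempty parts, whereas $\lambda$ exchanges the two parts. Hence $G^{+}$ is a proper subgroup of the monodromy group $G=\langle\lambda,\rho,\tau\rangle$, and since $[G:G^{+}]\le 2$ was already recalled, we conclude $[G:G^{+}]=2$, i.e.\ $M$ is orientable by definition.

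For the reverse direction I would use the correspondence recalled above between orientable maps and oriented maps: if $M$ is orientable it is the non‑oriented map associated with some oriented map $(D,R,L)$, and in that construction its flags are indexed by $D\times\{0,1\}$, where $x_{0}$ and $x_{1}$ are the two flag‑triangles incident with the dart $x$, lying on its two sides with respect to the fixed orientation. Each of the three side‑reflections $\lambda,\rho,\tau$ is locally orientation‑reversing and therefore interchanges the two sides, so each of them maps $D\times\{0\}$ onto $D\times\{1\}$ and conversely. Thus $\{D\times\{0\},\,D\times\{1\}\}$ is a bipartition of $B^{*}$, and $B^{*}$ is bipartite.

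The delicate point is exactly this last claim that all three generators reverse the side of a dart; I would verify it by going through the explicit recipe that turns $(D,R,L)$ into $(F,\lambda,\rho,\tau)$, tracking, for each of the three types of side of a flag‑triangle, which dart and which side the reflected triangle occupies. The reason one cannot shortcut this with a purely group‑theoretic argument is that $\langle\lambda,\rho,\tau\rangle$ need not act freely on $F$ (flag stabilizers can be nontrivial), so the parity of words does not automatically descend to a colouring of $F$; one first needs to know that $G^{+}$ has exactly two orbits on $F$, and it is precisely the oriented‑map structure that supplies this (there $G^{+}\cong\langle R,L\rangle$ is transitive on each of the two ``sides'' of $D$ separately). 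Everything else is bookkeeping.
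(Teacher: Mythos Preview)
The paper does not prove Theorem~\ref{thm:orientable}; it is quoted as a known fact with references to standard textbooks, so there is no ``paper's proof'' to compare against.

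Your argument is correct. The direction ``$B^{*}$ bipartite $\Rightarrow$ orientable'' is clean and complete. For the converse you invoke the correspondence between orientable non-oriented maps and oriented maps, which the paper also states (without proof) just above the theorem; given that, your plan to exhibit the bipartition as $D\times\{0\}$ versus $D\times\{1\}$ is exactly right, and the verification you flag as ``delicate'' is routine once one writes down the explicit formulas: in the standard construction one has $\tau(x,\epsilon)=(x,1-\epsilon)$, $\lambda(x,\epsilon)=(Lx,1-\epsilon)$, and $\rho(x,\epsilon)=(R^{\mp}x,1-\epsilon)$ (the sign of the exponent depending on $\epsilon$), so each generator flips the second coordinate.

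Your closing remark is also on point and worth keeping: the bare index condition $[G:G^{+}]=2$ does not by itself force $G^{+}$ to have two orbits on $F$, because point stabilisers in $G$ need not lie in $G^{+}$ a priori. It is precisely the geometric/oriented-map input that rules this out, so one cannot replace the second half of the proof by a one-line parity argument. A reader might otherwise wonder why you do not simply push the sign homomorphism $G\to\{\pm 1\}$ down to a colouring of $F$; you have pre-empted that objection.
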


\heading{Degree types and refined degree types.}
By the \emph{degree of a face} we mean the length of its boundary walk. 
A face of degree $d$ will be called a $d$-face.
By a \emph{cyclic vector} of length $m$ we mean the  orbit in the action of the cyclic group $\cyc_m$ on a set of $m$-dimensional vectors shifting cyclically the entries of vectors.
The $m$-dimensional vectors are endowed with the lexicographic order, therefore we can represent each cyclic vector by its minimal representative. 
For a vector $Y$ we denote by $|Y|$ its length.

Let $M$ be an oriented map and let $u$ be a vertex of degree $d$.
Let $u_1,\dots, u_d$ be its neighbors, listed according to the chosen orientation.
The \emph{degree type} $\calD(u) = (\deg(u_1),\dots,\deg(u_d))$ of $u$ is the minimal representative of the respective cyclic vector of degrees of the neighbors of $u$, where minimality is defined as follows.
We set $\calD(u) \prec \calD(v)$ if $|\calD(u)| < |\calD(v)|$, or if $|\calD(u)| = |\calD(v)|$ and $\calD(u)$ is lexicographically smaller than  $\calD(v)$.

Following the clockwise orientation, the cyclic vector $(f_1,\dots,f_d)$ of degrees of faces incident with a vertex $v$ is called the \emph{local type} of $v$.
The \emph{refined degree} of $v$, denoted $\redeg(v)$, is the minimal representative of the local type, where the order of local types is defined in the same was as above for degree types.
Note that $\deg(u) = |\redeg(u)|$.
The \emph{refined degree type} $\calR(u)$  of $u \in V(M)$ is  the lexicographically minimal representative of the cyclic vector $(\redeg(u_0),\dots,\redeg(u_{d-1}))$, where $u_0, \dots, u_{d-1}$ are the neighbors of $u$ listed in the order following the clockwise orientation.
Similarly as above, we set $\calR(u) \prec \calR(v)$ if $|\calR(u)| < |\calR(v)|$, or if $|\calR(u)| = |\calR(v)|$ and $\calR(u)$ is lexicographically smaller than  $\calR(v)$.

For our paper, the following observation will be important: the degree types and the refined degree types are preserved by map isomorphisms.
In particular, the respective decomposition(s) of the vertex-set of a map has the following property: the set of vertices of the same (refined) degree type is a union of orbits in the action of $\Auto(M)$.

\heading{Light vertices.}
A map is called \emph{face-normal}, if all its faces are of degree at least three.
It is well-known that every face-normal map on the sphere or on the projective plane has a vertex of degree at most $5$.
Using the Euler-Poincar\'{e} formula, this can be generalized for other surfaces.

\begin{theorem}\label{thm:light}
Let $S$ be a closed compact surface with Euler characteristic $\chi(S) \leq 0$ and let $M$ be a face-normal map on $S$.
Then there is a vertex of valence at most $6(1 - \chi(S))$.
\end{theorem}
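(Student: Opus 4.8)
The plan is to run a purely global counting argument: combine the Euler--Poincar\'e formula $v(M) - e(M) + f(M) = \chi(S)$ with the two handshake identities $\sum_{v} \deg(v) = 2e(M)$ and $\sum_{f} \deg(f) = 2e(M)$. Face-normality enters through the second identity only, and the whole proof is a one-line averaging estimate, so I expect no real obstacle; the care needed is purely bookkeeping.

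Concretely, set $\delta := \min_{v \in V(M)} \deg(v)$. Since $M$ is face-normal, every face has degree at least $3$, so $\sum_{f}\deg(f) = 2e(M)$ gives $3f(M) \le 2e(M)$, i.e. $f(M) \le \tfrac23 e(M)$. Likewise $\delta\, v(M) \le \sum_v \deg(v) = 2e(M)$ gives $e(M) \ge \tfrac{\delta}{2} v(M)$, hence $-\tfrac13 e(M) \le -\tfrac{\delta}{6} v(M)$. Plugging both into the Euler--Poincar\'e formula,
\[
\chi(S) \;=\; v(M) - e(M) + f(M) \;\le\; v(M) - \tfrac13 e(M) \;\le\; v(M)\Bigl(1 - \tfrac{\delta}{6}\Bigr),
\]
which after multiplying by $6$ reads $6\,\chi(S) \le v(M)\,(6 - \delta)$.

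Now I would split on the sign of $6-\delta$. If $\delta \le 6$, then because $\chi(S) \le 0$ we have $6\bigl(1 - \chi(S)\bigr) \ge 6 \ge \delta$, so a minimum-degree vertex already has valence $\le 6(1-\chi(S))$. If instead $\delta \ge 7$, then $6 - \delta < 0$, and using $v(M) \ge 1$ (the underlying graph is non-empty and connected) we get $6\,\chi(S) \le v(M)(6-\delta) \le 6 - \delta$, whence $\delta \le 6 - 6\chi(S) = 6\bigl(1 - \chi(S)\bigr)$. In both cases there is a vertex of valence at most $6\bigl(1 - \chi(S)\bigr)$, as claimed.

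The only subtleties worth flagging in the write-up: maps may be multigraphs, so a loop contributes $2$ to a vertex degree, but this leaves $\sum_v \deg(v) = 2e(M)$ intact; the hypothesis of face-normality is precisely what makes $3f(M) \le 2e(M)$ valid, since a $1$- or $2$-face would break it; and the case $\delta \le 6$ must be handled separately because there the inequality $6\chi(S)\le v(M)(6-\delta)$ carries no upper bound on $\delta$. Finally, for $\chi(S) = 0$ the bound specializes to valence $\le 6$, attained by the triangular grid on the torus, so the constant $6$ cannot be lowered; this can be mentioned as a remark but is not needed for the proof.
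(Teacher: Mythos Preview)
Your proof is correct and takes essentially the same route as the paper: combine Euler--Poincar\'e with the face-normal bound $3f(M)\le 2e(M)$ and the handshake identity to bound the minimum degree. The paper phrases the estimate via the average degree $\overline{d} = 2e/v \le 6 - 6\chi/v \le 6(1-\chi)$ (after reducing to a triangulation), which sidesteps your case split on $\delta\le 6$ versus $\delta\ge 7$, but the content is identical.
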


\begin{proof}
A bound for maximum degree is achieved by a triangulation, thus we may assume that $M$ is a triangulation.
We have $f = 2e/3$.
By plugging this in the Euler-Poincar\'{e} formula and using the Handshaking lemma, we obtain
$3v - \overline{d}v/2 = 3\chi(S)$, where $\overline{d}$ is the average degree.
By manipulating the equality, we get $\overline{d} - 6 = -6\chi(S)/v$.
Since $\chi(S) \leq 0$, the right hand side is maximized for $v = 1$.
We conclude that $\overline{d} \leq 6(1-\chi(S))$.
\end{proof}

We say that a vertex is \emph{light} if its valence is at most $6(1 - \chi(S))$, for $\chi(S) \leq 0$. If $\chi(S) > 0$ then the vertex is light if its valence is at most $5$.

\medskip

\heading{Uniform and homogeneous maps.}
A map is \emph{uniform}\footnote{In~\cite{babai_vertex_transitive} Babai uses the term semiregular instead of uniform.} if the local types (or equivalently, the refined degrees) of all vertices are the same.
A map is \emph{homogeneous} of type $\{k,\ell\}$ if every vertex is of degree $k$ and every face is of degree $\ell$.
A \emph{dipole}  is a $2$-vertex spherical map which is dual to a spherical cycle.
\emph{A bouquet} is a one-vertex map that is a dual of a \emph{planted star} (a tree with at most one vertex of degree $>1$).

\begin{example}
\label{ex:uniform_sphere}
The face-normal uniform spherical maps are: the $5$ Platonic solids, the $13$ Archimedean solids, pseudo-rhombicuboctahedron, prisms, antiprisms, and cycles of length at least $3$.
It easily follows from Euler's formula that the spherical homogeneous maps are the $5$ Platonic solids, cycles, and dipoles.
\end{example}

%For the convenience of the reader we close this section by the classical statements forming the framework of our paper.
%
%\begin{theorem}[Steinitz~\cite{St1916}]
%\label{thm:steinitz}
%A graph  is the graph of a convex polytope in $\real^3$ if and only if it is planar and
%$3$-connected.
%\end{theorem}
%
%
%\begin{theorem}[Whitney~\cite{Wh1933}]
%\label{thm:whitney}
%A $3$-connected planar graph has a combinatorially unique embedding in the sphere.
%\end{theorem}
%
%
%\begin{theorem}[Mani~\cite{Ma1971}]
%\label{thm:mani}
%If $X$ is a $3$-connected planar graph, then there is a convex polyhedron $P$ in $\real^3$ whose
%graph is isomorphic to $X$ such that every automorphism of $X$ induces a symmetry of $P$.
%\end{theorem}
%
%
%\begin{theorem}[Consequence of Whitney's theorem]
%The automorphism group of a polyhedral graph $X$ is equal to the automorphism group of the unique
%embedding of $X$ in the sphere. 
%\end{theorem}
%
%In the remaining part of the paper all the considered maps will
%be oriented spherical labeled maps and we shall frequently
%omit the adjectives.

\section{Overview of the algorithm}
\label{sec:overview}

%\begin{algorithm}[H]
%    \DontPrintSemicolon
%    \SetAlgoNoEnd
%    \SetAlgoNoLine
%    \SetKw{Continue}{continue}
%    \SetKwInput{Input}{input}
%    \SetKwInOut{Output}{output}
%    \SetKwFunction{Orientable}{Orientable}
%    \SetKwFunction{Normalize}{Normalize}
%    \SetKwFunction{Largetype}{Large}
%    \SetKwFunction{Aperiodic}{Aperiodic}
%    \SetKwFunction{Periodic}{Periodic}
%    \SetKwFunction{Reduction}{Reduction}
%    \SetKwFunction{AutGen}{AutGen}
%    \SetKwFunction{AutGenOr}{AutGentOr}
%    \SetKwFunction{Uniform}{Uniform}
%    \SetKwFunction{Extend}{Extend}
%    \SetKwProg{Function}{Algorithm}{}{}
%
%    \Function{\AutGen{$M$}}{
%	\Input{A non-oriented map $N = (F, \lambda,\rho,\tau)$.}
%	\Output{Generators of $\Aut(N)$ as permutations of $F$.}
%	\BlankLine
%	\eIf{\Orientable{$N$}}{
%	    Construct the associated oriented maps $M = (D,R,L)$ and $M\inv = (D,R\inv,L)$.\\
%	    Let $S$ be a set of generators of $\Auto(M)$.\\
%	    \lIf{$\Isoo(M,M\inv) \neq \emptyset$}{$S \leftarrow \varphi$, for any $\varphi \in \Isoo(M,M\inv)$.}
%	}
%	{
%	    Construct the associated antipodal double-cover $\widetilde{N} = (D,R,L)$, where $D = F$, $R = \rho\tau$, and $L = \tau\lambda$.\\
%	    Let $S_0$ be the set of generators of $\Auto(\widetilde{N})$.\\
%	    Find the generators $S$ of the subgroup of $\Auto(\widetilde{N})$ that is centralized by $\tau$.\\
%	}
%	\Return{$S$}
%    }
%\end{algorithm}
%
%\begin{tikzcd}
%M_1\arrow{r}{\varphi} \arrow[swap]{d}{\Reduction} & M_2 \arrow{d}{\Reduction}\\
%\overline{M_1} \arrow{r}{\psi} & \overline{M_2}
%\end{tikzcd}

Using the background provided in the previous section, we are able to give more detailed overview of the whole algorithm.
Our algorithm applies a set of local reductions defined formally in Section~\ref{sec:reductions}.
For a given input oriented map $M$ it produces a sequence of labeled maps $M_0,M_1,\dots, M_k$, where $M_0 = M$ and $M_k$ is a uniform map.
We informally introduce particular parts of the algorithm

\heading{Priorities.}
The elementary reductions are ordered by a priority.
Each reduction has a list of darts, or vertices and darts is attached.
The attached list determines the part of the considered map which is going to be modified by the respective reduction.
In each step a reduction with highest priority with non-empty attached list is executed.
When performing an elementary reduction, the finite set of lists attached to the elementary reductions are reconstructed.

The reduction process is multilevel.
Firstly, the map is reduced to a face-normal map. Secondly,
a face-normal map is reduced to a $k$-valent map. Thirdly,
a $k$-valent map is reduced to a uniform map.
Finally, special algorithms are used to deal with some infinite series of uniform maps.
The number of operations
used there is controlled by the sum $v(M) + e(M)$, which in each step decreases. Formally the reductions
are described as transformations of the combinatorial labeled
maps $(D,R,L,\ell)\mapsto (D',R',L',\ell')$. This is needed
to produce exact proofs that the isomorphism relation is in each step preserved. In what follows we explain them informally
to help the reader.

\heading{Normalization.} 
At the first level, called $\normal$, the input map is changed to become face-normal.
This is done by performing  two types of elementary reductions:
the first one deletes sequences of $1$-faces attached to a vertex; see
Figure~\ref{fig:loops}. The second one replaces a sequence of $2$-faces
by a single edge, see Figure~\ref{fig:dipoles}. Normalization terminates with a face-normal map, or with a bouquet, or with a dipole. The reader can find a detailed explanation of Normalization in 
Subsection~\ref{Sub:Normalization}.  

\heading{From face-normal to $k$-valent maps.}
Assume we have a face-normal map that is not $k$-valent.
By Euler's theorem it contains a non-empty list of light vertices. Moreover, by connectivity
there are edges joining a vertex of minimum degree to a vertex of higher degree. Suppose at each vertex of minimum degree $d$ one can canonically identify a unique edge of this sort. Then the set $S$ of these edges forms a union of orbits in the action of the group of orientation preserving automorphisms. One    
can prove that $S$ induces a disjoint union of stars, see Lemma~\ref{lem:disjoint_stars}. 
When these assumptions are satisfied, a reduction called $\aperiodic$ is executed for a degree-type $\calD$.
The reduction $\aperiodic$ contracts each star with pendant light vertices
of degree type $\calD$ to the central vertex.
There are just finitely many classes of $\calD$ to consider, the list of all possible that can occur on a sphere can be found in Subsection~\ref{Sub:face-normal}. The algorithm executes $\aperiodic$ for $\calD$ being minimal. 

It may happen that
the reduction $\aperiodic$ cannot be used, because we are not able
to identify in a canonical way the edges that are going to be contracted. The algorithm recognises this by checking that 
the lists attached to $\aperiodic$ are empty for all $\calD$.
Further analysis shows that this happens when all the light vertices are either joined to the vertices of higher degree
(such vertices are said to be of large degree type), or they
are of periodic type. Therefore we first get rid of vertices
of large type by expanding  each such  vertex of degree $d$
to a $d$-face; see Figure~\ref{fig:ldt}. The respective reduction is called $\largetype$. After that the algorithm either returns to $\normal$, or, if the map remains face-normal, applies $\largetype$
again. 

If $\normal$, $\largetype$, and $\aperiodic$ cannot be executed, we obtain a $k$-valent map, or vertices with periodic degree types.
In this situation the algorithm employs the reduction $\periodic$.
This reduction is the most complex, see Figure~\ref{fig:pdt}.    
The process of reduction of a face-normal map to a $k$-valent map is described in detail in 
Subsection~\ref{Sub:face-normal}.

\heading{From $k$-valent maps to uniform.} If the map is a $k$-valent face-normal map (for some $k\leq 5$), none of the above
reductions applies. In particular, we cannot use the difference
in degrees of end-vertices of edges to determine the set of edges to be
contracted. The original Hopcroft-Wong algorithm at this stage
introduces new kinds of reductions that are difficult to describe
both formally and informally. Here our algorithm differs essentially. The main new
idea consists in the observation that there is no need to introduce
new reductions for face-normal $k$-valent maps, but the reductions $\aperiodic$,
$\largetype$ and $\periodic$ can be used, where instead of degrees and degree types, we use the refined degrees (local types) and refined
degree types to identify the part of map which is going to be modified. If we obtain a map which is not $k$-valent, then we return back to the degree type version of the reductions, or to $\normal$. We repeat this process until the map becomes uniform, i.e., it has the same refined degree type at each vertex.

\heading{Labels.} The elementary reductions are defined
in a way that the set of darts $D'$ of the reduced map is a subset
of the set $D$ of darts of the input map. For some of the reductions we have $D'=D$. Moreover, the deleted darts $D\setminus D'$
always form a union of orbits in the action of the orientation
preserving automorphism group. For each reduction we show
that for a given isomorphism $\psi: M_1\to M_2$, its restriction
$\psi'=\psi|_{D'_1}$ is an isomorphism between the reduced maps.
To reverse the implication, labels attached to darts of the maps
are introduced. It transpires that the best data-structures for the
labels are planted rooted trees with nodes labeled by integers.
They have several advantages. Firstly, the management of the labels
is efficient. Secondly, the main concepts of the theory of oriented maps easily extend to labeled maps. Thirdly, the labels can be used for a backward reconstruction of the input map from the associated reduced map. More details on the labels can be found
in Section~\ref{sec:labels}.

\heading{Uniform maps.}
If $g$ is at least $2$, then by Euler's formula, the number of uniform maps is bounded by a function of $g$, and the generators of the automorphism group can be computed by brute force.
Similarly for the finite families of uniform maps on the sphere.
For all the infinite classes of spherical and toroidal maps, we describe special algorithms in Section~\ref{sec:uniform}.

\heading{Non-oriented maps.}
The input of the whole algorithm is a non-oriented map $N = (F,\lambda,\rho,\tau)$.
First, we compute its Euler characteristic by performing a breadth-first search.
Then, we test whether it is orientable using Theorem~\ref{thm:orientable}.
If $N$ is orientable, then we construct the associated oriented maps $M = (D,R,L)$ and $M\inv = (D,R\inv,L)$ and use the sketched algorithm to compute $\Auto(M)$ and to find any $\varphi \in \Isoo(M,M\inv)$.
The group $\Aut(N)$ is reconstructed from $\Auto(M)$ and $\varphi$.
If $N$ is not orientable, then we reconstruct $\Aut(N)$ from the automorphisms of the antipodal double-cover of $N$; for more details see Section~\ref{sec:nonorientable}.

\section{From oriented maps to uniform oriented maps}
\label{sec:reductions}

%\begin{function}[H]
%    \DontPrintSemicolon
%    \SetAlgoNoEnd
%    \SetAlgoNoLine
%    \SetKw{Continue}{continue}
%    \SetKwInput{Input}{input}
%    \SetKwInOut{Output}{output}
%    \SetKwFunction{Orientable}{Orientable}
%    \SetKwFunction{Normalize}{Normalize}
%    \SetKwFunction{Largetype}{Large}
%    \SetKwFunction{Aperiodic}{Aperiodic}
%    \SetKwFunction{Periodic}{Periodic}
%    \SetKwFunction{Reduction}{Reduction}
%    \SetKwProg{Function}{Function}{}{}
%    \SetKwProg{Algorithm}{Algorithm}{}{}
%
%    \Algorithm{\Reduction{$M$}}{
%	\Input{An oriented map $M = (D,R,L)$.}
%	\Output{A face-normal uniform oriented map $M' = (D',R',L')$ such that $D' \subseteq D$ and $\Auto(M') \cong \Auto(M)$.}
%	\BlankLine
%	\While{$M$ is not uniform}{
%	    \eIf{$M$ is not face-normal}{
%		$M \leftarrow \Normalize{$M$}$
%	    }{
%		\eIf{$M$ has vertices of large refined degree type}{
%		    $M \leftarrow \Largetype{$M$}$
%		}{
%		    \eIf{$M$ has vertices of aperiodic refined degree type}{
%			$M\leftarrow \Aperiodic{$M$}$
%		    }{
%			\If{$M$ has vertices of periodic refined degree type}{
%			    $M\leftarrow \Periodic{$M$}$
%			}
%		    }
%		}
%	    }
%	}
%	\BlankLine
%	\Return{$M$}
%    }
%\end{function}

In this section, we describe in detail a set of elementary reductions defined on labeled oriented maps, given by a quadruple $(D,R,L,\ell)$.
The output of each elementary reduction is always a quadruple $(D',R',L',\ell')$, satisfying $D'\subseteq D$, $v(M') + e(M') < v(M) + e(M)$, and $\Auto(M') = \Auto(M)$.
We show that if none of the reductions apply, the map is a uniform oriented map.
This defines a function which assigns to a given oriented map $M$ a unique labeled oriented map $U$ with $\Auto(M)\cong\Auto(U)$.
Since the darts of $U$ form a subset of the darts of $M$, by semiregularity, every generator of $\Auto(U)$ can be extended to a generator of $\Auto(M)$ in linear time.
We deal with the uniform oriented maps in Section~\ref{sec:uniform}.

After every elementary reduction, to ensure that $\Auto(M') = \Auto(M)$, we need to define a new labeling $\ell'$.
To this end, in the whole section, we assume that we have an injective function $\lab\colon \mathbb{N} \times \bigcup_{k=1}^\infty\calT^k\to\calT$, where $\calT$ is the set of all integer-valued planted trees.
Moreover, we assume that the root of $\lab(t,T_1,\dots,T_k)$ contains the integer $t$, corresponding to the current step of the reduction procedure.
After every elementary reduction, this integer is increased by one.
In Section~\ref{sec:labels}, we show how to evaluate $\lab$ in linear time.

\subsection{Normalization}\label{Sub:Normalization}

By Theorem~\ref{thm:light}, there is always a light vertex in a face-normal map.
The purpose of the following reduction is to remove faces of valence one and two, unless the whole map is a bouquet or a dipole which we define to be irreducible.
This reduction is of the highest priority and it is applied until the map is one of the following: (i) face-normal, (ii) bouquet, (iii) dipole.
In the cases (ii) and (iii), the whole reduction procedure stops with a uniform map.
In the case (i), the reduction procedure continues with further reductions.
We describe the reduction formally.

\reduction
{$\normal(M)$}
{0}
{Oriented map $M = (D,R,L,\ell)$ which is not face-normal.}
{Oriented map $M' = (D',R',L',\ell')$ which is face-normal, or a bouquet, or a dipole such that $\Auto(M) = \Auto(M')$.}

For technical reasons we split the reduction into two parts: deletion of loops, denoted by $\loops(M)$, and replacement of a dipole by an edge, denoted by $\dipoles(M)$.

\begin{figure}[b]
\centering
\includegraphics{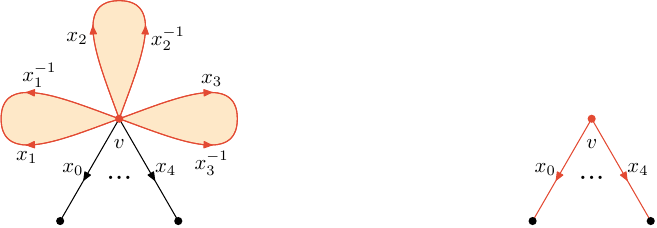}
\caption{A sequence of darts $x_1,x_1\inv,x_2,x_2\inv,x_3,x_3\inv$ with bounding darts $x_0$ and $x_4$.}
\label{fig:loops}
\end{figure}

\heading{Reduction $\loops$.}
If $M = (D,R,L,\ell)$ with $v(M) > 1$ contains loops, we remove them.
Let $\calL$ be the list of all maximal sequences of darts of the form $s =\{x_1,x_1\inv,\dots,x_k,x_k\inv\}$, where $Rx_i=x_i\inv$, for $i=1,\dots,k$, $Rx_i\inv=x_{i+1}$ for $i=1,\dots,k-1$, and $Rx_k\inv\neq x_1$.
By definition, $R^{-1}Lx_i = x_i$, hence $x_i$ and $x_i\inv$ bound a $1$-face; see Figure~\ref{fig:loops}.
Moreover, for each such sequence $s$, all the darts $x_i$ are incident to the same vertex $v\in V(M)$.
We say that the unique vertex $v$ with $R_v = (x_0,x_1,x_1\inv,\dots,x_k,x_k\inv,x_{k+1},\dots)$, for some darts $x_0,x_{k+1}$, is \emph{incident} to $s$.
We call the darts $x_0$ and $x_{k+1}$ the \emph{bounding darts} of the sequence $s$.
Note that it may happen that $x_0=x_{k+1}$, however, for every $s\in\calL$, we have $\{x_0,x_{k+1}\}\neq\emptyset$ since otherwise $v(M) = 1$ and the map $M$ is a bouquet.

The new map $M' = (D';R',L',\ell') =: \loops(M)$ is defined as follows.
First, we put $D' := D\setminus \bigcup_{s\in\calL}s$, and $L' := L_{\restriction{D'}}$.
Let $s =\{x_1,x_1\inv,\dots,x_k,x_k\inv\} \in \calL$ with bounding darts $x_0$ and $x_{k+1}$.
If $v$ is incident to $s$, then we put $R_v' := (x_0,x_{k+1},\dots)$, else we put $R_v' := R_v$.
Moreover, we put $\ell'(x_0) := \lab(t, a_0,\dots,a_k)$ and $\ell'(x_{k+1}) := \lab(t, a_{k+1},b_k,\dots,b_{1})$, where $t$ is the current step, $a_i = \ell(x_i)$, for $i = 0,\dots,k+1$, and $b_i = \ell(x_i\inv)$, for $i = 1,\dots,k$.
For every $x \in D'$ which is not a bounding dart in $M$, we put $\ell'(x) := \ell(x)$.
We obtain a well-defined map $M'$ with no faces of valence one; see Figure~\ref{fig:loops}.

\begin{lemma}
\label{lem:loops}
Let $M_i=(D_i,R_i,L_i,\ell_i)$, $i=1,2$ where $D_1\cap D_2=\emptyset$, be labeled oriented maps.
Let $M_1' := \loops(M_1)$ and $M_2' := \loops(M_2)$.
Then $\Isoo(M_1,M_2)_{\restriction{D_1'}} = \Isoo(M_1',M_2')$.
In particular, $\Auto(M_1)_{\restriction{D_1'}} = \Auto(M_1')$.
\end{lemma}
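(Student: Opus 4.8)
The plan is to establish the two directions of the set equality $\Isoo(M_1,M_2)_{\restriction{D_1'}} = \Isoo(M_1',M_2')$ separately, treating the special-case statement about $\Auto$ as the instance $M_1 = M_2$. Throughout, the key point is that the loop-deletion operation is ``local'': it only touches the $R_v$-cycles at vertices incident to a loop sequence, and it only alters the labels of bounding darts. Since the deleted darts $\bigcup_{s\in\calL}s$ consist of loops (darts $x$ with $R^{-1}Lx = x$), any isomorphism $\psi\colon M_1\to M_2$ must send loop sequences of $M_1$ to loop sequences of $M_2$: because $\psi$ commutes with $R_i$ and $L_i$, it preserves the property $R_i^{-1}L_ix = x$, it preserves incidence of darts to a common vertex, and it preserves maximality of a sequence (since $\psi$ respects the successor relation that defines when a sequence terminates, i.e.\ $Rx_k\inv\neq x_1$). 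Hence $\psi$ restricts to a bijection $D_1' \to D_2'$, and moreover it maps the sequence $s$ with bounding darts $x_0,x_{k+1}$ to the sequence $\psi(s)$ with bounding darts $\psi(x_0),\psi(x_{k+1})$, preserving the length $k$ and, by $\ell_1 = \ell_2\psi$, the tuples $(a_0,\dots,a_{k+1})$ and $(b_1,\dots,b_k)$.

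\textbf{Forward inclusion.} Given $\psi\in\Isoo(M_1,M_2)$, I would check that $\psi' := \psi_{\restriction{D_1'}}$ lies in $\Isoo(M_1',M_2')$, i.e.\ that it commutes with $R_i'$ and $L_i'$ and satisfies $\ell_1' = \ell_2'\psi'$. Commuting with $L' = L_{\restriction{D'}}$ is immediate. For $R'$: at a vertex $v$ not incident to any loop sequence, $R_v' = R_v$ and there is nothing to do; at a vertex $v$ incident to the sequence $s$ with bounding darts $x_0,x_{k+1}$, the new rotation $R_v'$ is obtained from $R_v$ by excising the block $(x_1,x_1\inv,\dots,x_k,x_k\inv)$, so that $R_v'$ sends $x_0\mapsto x_{k+1}$; since $\psi$ maps $v$ to a vertex $w = \psi(v)$ incident to $\psi(s)$ with bounding darts $\psi(x_0),\psi(x_{k+1})$, the same excision at $w$ gives $R_w'(\psi x_0) = \psi x_{k+1} = \psi(R_v' x_0)$, and on all other darts of $v$ the rotation is unchanged, so $\psi' R_1' = R_2'\psi'$. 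For the labels: on a dart $x\in D_1'$ that is not a bounding dart of $M_1$, $\ell_1'(x) = \ell_1(x) = \ell_2(\psi x) = \ell_2'(\psi' x)$, where the last equality holds because $\psi x$ is then not a bounding dart of $M_2$; on a bounding dart $x_0$, we have $\ell_1'(x_0) = \lab(t, a_0,\dots,a_k)$ and $\ell_2'(\psi x_0) = \lab(t, a_0',\dots,a_k')$ with $a_i' = \ell_2(\psi x_i) = \ell_1(x_i) = a_i$, and similarly for $x_{k+1}$ using the reversed tuple; here the step counter $t$ is the same for both maps since it is determined by the reduction procedure, not the map. Hence $\ell_1' = \ell_2'\psi'$.

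\textbf{Backward inclusion.} Conversely, given $\varphi\in\Isoo(M_1',M_2')$, I would extend it to a bijection $\widetilde\varphi\colon D_1\to D_2$ and verify $\widetilde\varphi\in\Isoo(M_1,M_2)$. The extension is forced: each bounding dart $x_0\in D_1'$ carries, encoded in $\ell_1'(x_0) = \lab(t, a_0,\dots,a_k)$, the length $k$ and the labels $a_1,\dots,a_k$ (and $x_{k+1}$ carries $b_1,\dots,b_k$), and since $\lab$ is injective and $\ell_1'(x_0) = \ell_2'(\varphi x_0)$, the sequence $s'$ of $M_2$ bounded by $\varphi x_0$ has the same length and the same label data; one then declares $\widetilde\varphi(x_i) := $ the $i$-th dart of $s'$ and $\widetilde\varphi(x_i\inv) := $ its reverse, and $\widetilde\varphi := \varphi$ elsewhere. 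The main obstacle here — and the only genuinely delicate point in the lemma — is checking that $\widetilde\varphi$ commutes with $R_1$ on the excised blocks and that the label equation $\ell_1 = \ell_2\widetilde\varphi$ holds on all of $D_1$: on the $x_i,x_i\inv$ one reads off $R_1 x_i = x_i\inv$, $R_1 x_i\inv = x_{i+1}$ (for $i<k$), $R_1 x_k\inv = x_{k+1}$, which are matched on the $M_2$ side by the definition of ``sequence bounded by $\varphi x_0$'', and the labels $\ell_1(x_i) = a_i$, $\ell_1(x_i\inv) = b_i$ are matched because $\widetilde\varphi$ was defined to transport exactly these. A small bookkeeping subtlety is that one must also confirm $\widetilde\varphi$ is a well-defined bijection, i.e.\ distinct bounding darts of $M_1$ (possibly with $x_0 = x_{k+1}$ for a single sequence) give disjoint excised blocks mapping to disjoint blocks in $M_2$ — this follows from maximality of the sequences in $\calL$. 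Once $\widetilde\varphi\in\Isoo(M_1,M_2)$ is established, $\widetilde\varphi_{\restriction{D_1'}} = \varphi$ by construction, giving the reverse inclusion. Setting $M_1 = M_2$ yields $\Auto(M_1)_{\restriction{D_1'}} = \Auto(M_1')$.
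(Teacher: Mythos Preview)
Your proposal is correct and follows essentially the same two-direction strategy as the paper: restrict $\psi$ for the forward inclusion, and extend $\varphi$ along the deleted loop-blocks using injectivity of $\lab$ for the backward one. One point the paper makes explicit that you gloss over in the backward direction: to conclude that $\varphi x_0$ is a \emph{bounding} dart of $M_2$ (so that there is a sequence $s'$ in $M_2$ to which you can extend), you need more than injectivity of $\lab$---you need that the step counter $t$ stored in the root of $\ell_1'(x_0)=\lab(t,a_0,\dots,a_k)$ is strictly larger than any root value appearing in an unmodified label $\ell_2(y)$, which is exactly what the increment of $t$ after each reduction guarantees.
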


\begin{proof}
If $M$ has no $1$-faces or if $M$ is a bouquet, then $M' = M$ and there is nothing to prove.
Otherwise, let $\psi \colon M_1 \to M_2$ be an isomorphism.
We prove that $\psi' := \psi_{\restriction{D_1'}}$ is an isomorphism of $M_1'$ and $M_2'$.
Since $\psi$ preserves the set of $1$-faces, the mapping $\psi'$ is a well-defined bijection.
We check the commuting rules~(\ref{eq:iso}) for $\psi'$.

By the definition of $\loops$, $L_i' = L_i{_{\restriction{D_i}}}$, for $i = 1,2$.
Thus, we have $\psi'L_1' = L_2'\psi'$.
As concerns the permutations $R_1'$ and $R_2'$, we need to check the commuting rule only at the darts preceding a sequence of $1$-faces (in the clockwise orientation).
With the above notation, using the definition of $M_1'$ and $M_2'$, and the fact that $\psi$ is an isomorphism, we get
$$\psi'R_1'x_0 = \psi'R_1(L_1R_1)^kx_0 = \psi R_1(L_1R_1)^kx_0 = R_2(L_2R_2)^k\psi x_0 =  R_2(L_2R_2)^k\psi' x_0 =  R_2'\psi' x_0.$$
Finally, for $\ell_1'$ and $\ell_2'$, we have, by the definition of $\loops$,
$$\ell_1'(x_0) = \lab(t, \ell_1(x_0),\dots,\ell_1(x_k)) = 
\lab(t, \ell_2(\psi x_0),\dots,\ell_2(\psi x_k)) = \ell_2'(\psi' x_0)$$
if and only if
$$\ell_1(x_i) = \ell_2(\psi x_i), \text{ for } i = 0,\dots,k,$$
which is satisfied since $\psi$ is an isomorphism.
Similarly, $\ell'_1(x_{k+1}) = \ell'_2(\psi x_{k+1})$. 

Conversely, let $\psi'\colon M_1'\to M_2'$ be an isomorphism.
With the above notation, we have
$$x_i = R_1(L_1R_1)^ix_0 \quad\text{and}\quad x_{k+1} = R_1'x_0.$$
Since $\lab$ is injective, it follows that there are $y_1,\dots,y_k$ in $D_2\setminus D_2'$ such that
$$y_i = R_2(L_2R_2)^i\psi' x_0.$$
Here we employ the fact that $t$ is increased after every elementary reduction.
This forbids the existence of an isomorphisms $\psi': M_1'\to M_2'$ taking a bounding dart to a dart that is not bounding, i.e., $\psi'$ takes  the set of bounding darts onto the set of bounding darts.
We define an extension $\psi$ of $\psi'$ by setting $\psi x_i = y_i$, for $i = 1,\dots,k$.
It is straightforward to check that $\psi\in\Isoo(M_1,M_2)$.
\end{proof}

\heading{Reduction $\dipoles$.}
If $M = (D,R,L,\ell)$ with $v(M) > 2$ contains dipoles as submaps, we replace them by edges.
Let $\calL$ be the list of all maximal sequences $s = (x_1,\dots,x_k)$ of darts, $k>1$, satisfying $Rx_i=x_{i+1}$, $(R^{-1}L)^2x_i=x_i$, and either $Rx_k\neq x_1$ or $Rx_1\inv\neq x_k\inv$; see Figure~\ref{fig:dipoles}.
Let $s\inv := (x_k\inv,\dots,x_1\inv) \in \calL$ be the \emph{inverse} sequence.
There are vertices $u$ and $v$ such that $R_u = (y_1, s, y_2,\dots)$ and $R_v = (z_1,s\inv,z_2,\dots)$, for some $y_1,y_2,z_1,z_2\in D$.
At least one of the sets $\{y_1,y_2\}$, $\{z_1,z_2\}$ is non-empty since otherwise $v(M) = 2$ and $M$ is a dipole.
We say that $u, v$ are \emph{incident} to $s, s\inv$, respectively; see Figure~\ref{fig:dipoles}

The new map $M' = (D',R',L',\ell') =: \dipoles(M)$ is defined as follows.
First, we put $D' := D\setminus\bigcup_{(x_1,\dots,x_k)\in\calL}\{x_2,\dots,x_k\}$.
Let $s = (x_1,\dots,x_k) \in \calL$.
If $u$ and $v$ are incident to $s$ and $s\inv$, respectively, then we put $R_u' := (y_1,x_1,y_2\dots)$ and $R_v' := (z_1,x_k\inv,z_2,\dots)$, else we put $R_u' := R_u$.
Next, we put $L'x_1 := x_k\inv$, $L'x_k\inv := x_1$, and $L'x := Lx$ if $x\notin s \in \calL$.
Finally, we put $\ell'(x_1) := \lab(t, a_1,\dots,a_k)$ and $\ell'(x_k\inv) := \lab(t, b_k,\dots,b_1)$, where $t$ is the current step, $a_i = \ell(x_i)$ and $b_i = \ell(x_i\inv)$, for $i = 1,\dots,k$. We put $\ell'(x) := \ell(x)$ for $x\notin s \in \calL$.
We obtain a well-defined map $M'$ with no $2$-faces; see Figure.~\ref{fig:dipoles}.

\begin{figure}
\centering
\includegraphics{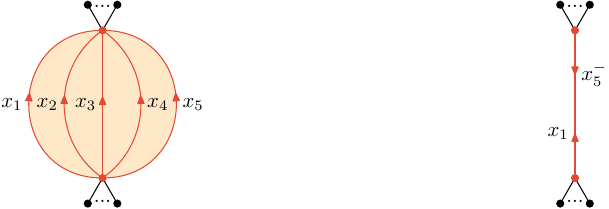}
\caption{A sequence of darts $x_1,\dots,x_5$ forming a dipole.}
\label{fig:dipoles}
\end{figure}

\begin{lemma}
\label{lem:romove_dipoles}
Let $M_i=(D_i,R_i,L_i,\ell_i)$, $i=1,2$ where $D_1\cap D_2=\emptyset$, be labeled oriented maps.
Let $M_1' := \dipoles(M_1)$ and $M_2' := \dipoles(M_2)$.
Then $\Isoo(M_1,M_2)_{\restriction{D_1'}} = \Isoo(M_1',M_2')$.
In particular, $\Auto(M_1)_{\restriction{D_1'}} = \Auto(M_1')$.
\end{lemma}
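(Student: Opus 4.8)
The plan is to follow the blueprint of the proof of Lemma~\ref{lem:loops}, reading ``$2$-face'' for ``$1$-face'' and letting the two darts $x_1$ and $x_k\inv$ that survive each reduced dipole play the part of the bounding darts there. First I would dispose of the degenerate cases: if $M_1$ has no $2$-faces, or $M_1$ is a dipole, then $\dipoles(M_1)=M_1$ and there is nothing to prove, so assume neither occurs.

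For the inclusion $\Isoo(M_1,M_2)_{\restriction{D_1'}}\subseteq\Isoo(M_1',M_2')$ I would take $\psi\in\Isoo(M_1,M_2)$. Since $\psi$ commutes with $R_1$ and $L_1$ it commutes with the face permutation $R_1^{-1}L_1$, so it sends $2$-faces to $2$-faces and hence maps the list $\calL$ of $M_1$ bijectively onto that of $M_2$; in particular $\psi$ maps $D_1\setminus D_1'$ onto $D_2\setminus D_2'$ and $\psi':=\psi_{\restriction{D_1'}}$ is a well-defined bijection $D_1'\to D_2'$. I would then verify the three rules of~(\ref{eq:iso}) for $\psi'$. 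The rule for $L'$ is clear off the reduced dipoles, and on a reduced dipole reads $\psi'L_1'x_1=\psi x_k\inv=(\psi x_k)\inv=L_2'\psi'x_1$, using $\psi L_1=L_2\psi$. The rule for $R'$ needs checking only at the darts $x_1$ and $x_k\inv$ where $R_1'$ differs from $R_1$; there $R_1'x_1=R_1^k x_1$, and since $\psi$ carries $(x_1,\dots,x_k)$ to the analogous sequence of $M_2$, $\psi'R_1'x_1=\psi R_1^k x_1=R_2^k\psi x_1=R_2'\psi'x_1$, and symmetrically at $x_k\inv$. For the labels, injectivity of $\lab$ turns $\ell_1'(x_1)=\ell_2'(\psi'x_1)$ into the statement $\ell_1(x_i)=\ell_2(\psi x_i)$ for all $i$, which holds because $\psi$ is an isomorphism; $x_k\inv$ is symmetric and the remaining darts are trivial.

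For the reverse inclusion, given $\psi'\in\Isoo(M_1',M_2')$, the key observation --- exactly as in the proof of Lemma~\ref{lem:loops} --- is that the step counter $t$ is increased after every elementary reduction, so in $M_i'$ the darts whose label is a tree with $t$ at its root are precisely the darts $x_1$ and $x_k\inv$ created by $\dipoles$; since $\psi'$ preserves labels, it maps this set in $M_1'$ onto the corresponding set in $M_2'$. Fixing $s_1=(x_1,\dots,x_k)\in\calL$ for $M_1$, injectivity of $\lab$ applied to $\ell_1'(x_1)=\ell_2'(\psi'x_1)$ forces $\psi'x_1$ to be the ``$x_1$-type'' or the ``$x_k\inv$-type'' surviving dart of some sequence $s_2=(x_1',\dots,x_k')$ of $M_2$ of the \emph{same} length $k$, with the corresponding labels matching; the value $\psi'x_k\inv=L_2'\psi'x_1$ is then determined too. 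In the first case I would extend $\psi$ on the deleted darts by $\psi x_i:=x_i'$ and $\psi x_i\inv:=x_i'\inv$, and in the reversed case by $\psi x_i:=x_{k+1-i}'\inv$ and $\psi x_i\inv:=x_{k+1-i}'$. Doing this over all sequences yields a bijection $\psi\colon D_1\to D_2$ extending $\psi'$, and one checks $\psi R_1=R_2\psi$, $\psi L_1=L_2\psi$ and $\ell_1=\ell_2\psi$ directly from the construction (the equalities at the bounding darts $y_2,z_2$ reduce, via $y_2=R_1'x_1$ and $z_2=R_1'x_k\inv$, to $\psi'$ being an isomorphism of $M_1'$ and $M_2'$), so $\psi\in\Isoo(M_1,M_2)$. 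The ``in particular'' clause is the case $M_1=M_2$.

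I expect the main obstacle to lie in the reverse direction. Three things must be controlled: that $\psi'$ cannot send a surviving dipole dart to a dart of a different kind (this is what the strictly increasing counter $t$ guarantees), that it cannot match dipoles of different lengths (excluded because $\lab$ is injective and its argument records $k$), and that $\psi'$ may reverse a dipole --- the labels $\lab(t,a_1,\dots,a_k)$ and $\lab(t,b_k,\dots,b_1)$ are usually but not always distinct, so both orientations of the extension must be allowed and each verified against the commuting rules. Once these are handled, the remainder is a routine transcription of the proof of Lemma~\ref{lem:loops}.
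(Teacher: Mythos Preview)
Your proof is correct and follows essentially the same approach as the paper's: verify the commuting rules~(\ref{eq:iso}) for $\psi'$ in the forward direction, and in the reverse direction use injectivity of $\lab$ together with the step counter $t$ to match each surviving dipole dart with one of the same type and length in $M_2'$, then extend along the deleted sequence. Your ``reversal'' case is already subsumed in the paper's framework, since $s\inv=(x_k\inv,\dots,x_1\inv)$ is itself a sequence in $\calL$ with first dart $x_k\inv$, so every surviving dart is a first dart of some sequence and no separate case split is needed; incidentally, your formula $R_1'x_1=R_1^k x_1$ is the correct one.
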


\begin{proof}
Let $\psi \colon M_1 \to M_2$ be an isomorphism.
We prove that $\psi' = \psi_{\restriction{D_1'}}$ is an isomorphism of $M_1'$ and $M_2'$.
Since $\psi$ preserves the set of $2$-faces, the mapping $\psi'$ is a well-defined bijection.
We check the commuting rules~(\ref{eq:iso}) for $\psi'$.

By the definition of $\dipoles$, $L_1'x_1 = x_k\inv = L_1R_1^{k-1}x_1$ and $L_1'x_k\inv = L_1R_1^{k-1}x_k\inv$.
We have
$$\psi'L_1' x_1 = \psi'L_1R_1^{k-1}x_1 = \psi L_1R_1^{k-1}x_1 =  L_2R_2^{k-1}\psi x_1 = L_2'\psi' x_1,$$
and
$$\psi'L_1' x_k\inv = \psi'L_1R_1^{k-1}x_k\inv = \psi L_1R_1^{k-1}x_k\inv =  L_2R_2^{k-1}\psi x_k\inv = L_2'\psi' x_k\inv.$$
It follows that $\psi L_1' = L_2'\psi$.

For $R_1'$ and $R_2'$, it follows that we need to check the commuting rule only at the darts $x_1$ and $x_k\inv$ bounding a sequence of $2$-faces in $M_1$.
With the above notation, using the definition of $M_1'$ and $M_2'$, and the fact that $\psi$ is an isomorphism, we get
$$\psi' R_1' x_1 = \psi' R_1^{k-1} x_1 = \psi R_1^{k-1} x_1 = R_2^{k-1}\psi x_1 = R_2'\psi x_1 =  R_2'\psi'x_1.$$
For $x_k\inv$, the verification of the commuting rules looks the same.

For $\ell_1'$ and $\ell_2'$, we have, by the definition of $\dipoles$,
$$\ell_1'(x_1) = \lab(s, \ell_1(x_1),\dots,\ell_1(x_k)) = \lab(s, \ell_2(\psi x_1),\dots,\ell_2(\psi x_k)) = \ell_2'(\psi' x_1)$$
if and only if
$$\ell_1(x_i) = \ell_2(\psi x_i), \text{ for } i = 1,\dots,k,$$
which is satisfied since $\psi$ is an isomorphism.
Similarly, we check that $\ell'(x_{k}\inv) = \ell'(\psi x_{k}\inv)$. 

Conversely, let $\psi'\colon M_1'\to M_2'$ be an isomorphism.
With the above notation, we have
$$x_i = R_1^{i-1}x_1 \quad \text{and}\quad x_i\inv = R^{i-1}x_k\inv,$$
for $i = 1,\dots,k$.
Since $\lab$ is injective, it follows that there are $y_2,\dots,y_k$ in $D_2\setminus D_2'$ such that
$$y_i = R_2^{i-1}\psi' x_1$$
determining a sequence of $2$-faces.
We define an extension of $\psi$ of $\psi'$ by setting $\psi x_i := y_i$, for $i = 2,\dots,k$.
It is straightforward to check that the extension $\psi \in \Isoo(M_1,M_2)$.
\end{proof}

\subsection{Face-normal maps}\label{Sub:face-normal}

Let $M$ be a face-normal oriented map with a vertex $u$ of minimum degree $d$.
By Theorem~\ref{thm:light}, $d$ is bounded by a function of $g$, where $g$ is the genus of the underlying surface of $M$.
Let $v_0,\dots,v_{d-1}$ be the neighbors of $u$, and let $\calD(u) = (m_0,\dots,m_{d-1})$ be its degree type, where $\deg(v_i)=m_i$, for $i=0,\dots,d-1$.
We have $m_i \ge d$ for all $i$, $0\leq i<d$.
We say that $u$ has
\begin{packed_itemize}
\item \emph{large type} if $m_k > d$ for all $k$,
\item \emph{small type} if there exists $i\neq j$ with $m_i = d$ and $m_j > d$, and
\item \emph{homogeneous type} if $m_i = d$, for $i = 0,\dots,d-1$.
\end{packed_itemize}
A small degree type is called \emph{periodic} if it can be written in the form 
$$(d,m_1\dots,m_{k},\dots,d,m_1,\dots,m_{k}),$$
where $m_1,\dots,m_k > d$ and the sequence $d,m_1,\dots,m_k$ occurs at least twice.
A small degree type is called \emph{aperiodic}, if it is not periodic.

\begin{example}
For example, if $M$ is a face-normal spherical map, then $d\leq 5$ and the only possible periodic type is $(4,m,4,m)$, where $m>4$.

For a spherical map and any suitable integers $m_1,m_2,m_3,m_4$, the following are all the possible aperiodic types:
$(2,m_1)$, $(3,3,m_1)$, $(3,m_1,m_2)$, $(4,4,4,m_1)$, $(4,4,m_1,m_2)$, $(4,m_1,4,m_2)$, $(4,m_1,m_2,m_3)$, $(5,5,5,5,m_1)$, $(5,5,5,m_1,m_2)$, $(5,5,m_1,5,m_2)$, $(5,5,m_1,m_2,m_3)$, $(5,m_1,5,m_2,m_3)$, $(5,m_1,m_2,5,m_3)$, $(5,m_1,m_2,m_3,m_4)$.
\end{example}

We introduce three types of reductions.

\heading{Reduction $\largetype$.}

The input is a labeled face-normal oriented map $M=(D,R,L,\ell)$ and a list $\calL$ of all light vertices of minimum degree $d$ with large degree type. 
For every vertex $v \in \calL$ with $\calD(v) = (m_0,\dots,m_{d-1})$ and the respective neighbors $u_0,\dots,u_{d-1}$, we delete $v$ together with all the edges incident to it, and we add the face bounded by the cycle $v_0,\dots,v_{d-1}$.

\reduction
{$\largetype(M)$}
{$(1, d)$}
{Face-normal oriented map $M$ with a list $\calL$ of vertices of degree $d$ of large type.}
{Oriented map $M'$ with $V(M') = V(M)\setminus \calL$ and $D'= D$.}

The new map $M' =(D',R',L',\ell') =: \largetype(M)$ is defined as follows. 
We set $D' := D$ and $L' := L$.
For $v\in \calL$, let $u_0,u_1,\dots,u_{d-1}$ be the neighbors of $v$ listed in the order following the chosen orientation.
Denote by $x_0,x_1,\dots, x_{d-1}$ the darts based at $u_0,u_1,\dots,u_{d-1}$, joining $u_j$ to $v$ for $j=0,\dots,d-1$.
We have $R_{u_i} = (y_i,x_i,z_i,\dots)$, for $i = 0,\dots,d-1$.
We set $R_{u_i}' := (y_i,x_i,x_{i-1}\inv, z_i,\dots)$.
Moreover, we set $\ell'(x_i) := \lab(t, \ell(x_i))$ and $\ell'(x_i\inv) := \lab(t, \ell(x_i\inv))$, where $t$ is the current step number; see Figure~\ref{fig:ldt}.

\begin{figure}[b]
\centering
\includegraphics{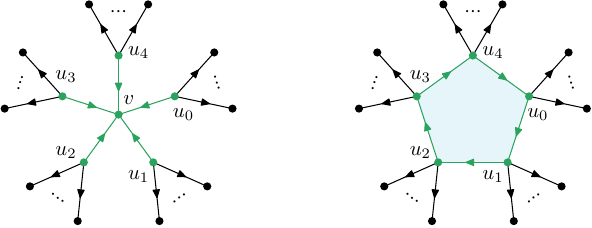}
\caption{Removing vertices of large degree type.}
\label{fig:ldt}
\end{figure}

\begin{lemma}
\label{lem:ldt_subgroup}
Let $M_i=(D_i,R_i,L_i,\ell_i)$, $i=1,2$ where $D_1\cap D_2=\emptyset$, be labeled oriented maps.
Let $M_1' := \largetype(M_1)$ and $M_2' := \largetype(M_2)$.
Then $\Isoo(M_1,M_2) = \Isoo(M_1',M_2')$.
In particular, $\Auto(M_1) = \Auto(M_1')$.
\end{lemma}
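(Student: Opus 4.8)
\heading{Setup and first reductions.}
Unlike the reductions $\loops$ and $\dipoles$, the reduction $\largetype$ leaves the dart set unchanged, $D_i'=D_i$, so there is nothing to restrict: the claimed identity is a \emph{literal} equality of sets of bijections $D_1\to D_2$. My plan is therefore to show that one and the same bijection $\psi$ satisfies the three commuting rules~(\ref{eq:iso}) for $(M_1,M_2)$ iff it satisfies them for $(M_1',M_2')$. Two structural observations do the bookkeeping. First, $L_i'=L_i$, so the $L$‑rule is identical on both sides. Second, $\largetype$ changes the rotation only locally: away from the $2d|\calL|$ darts $x_0,\dots,x_{d-1}$ and their reverses attached to the vertices $v\in\calL$ we have $R_i'=R_i$, while at each of these darts $R_i'$ is obtained from $R_i$ and $L_i$ by a \emph{fixed} word not depending on the map — inflating $v$ into the $d$‑face $F_v$ makes $R'x_j$ equal to the old face‑successor $R^{-1}Lx_j$ of $x_j$, and $R'x_j\inv$ equal to a length‑three word in $R,L$ applied to $x_j\inv$. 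The labelling changes only at these same darts, via $\ell'(x_j)=\lab(t,\ell(x_j))$ and $\ell'(x_j\inv)=\lab(t,\ell(x_j\inv))$ for the current step $t$.

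\heading{From $M_i$ to $M_i'$.} Take $\psi\in\Isoo(M_1,M_2)$. Since isomorphisms preserve degree types (Section~\ref{sec:preliminaries}), $\psi$ maps $\calL_1$, the set of vertices of minimum degree $d$ of large type, onto $\calL_2$, and for every $v\in\calL_1$ it carries the darts $x_0,\dots,x_{d-1}$ at $v$ to the corresponding darts at $\psi(v)$ in the correct cyclic order (because $\psi R_1=R_2\psi$). Then the three rules for $\psi$ with respect to $(M_1',M_2')$ follow mechanically: the $L$‑rule is free; the $R$‑rule holds on the unchanged darts by $\psi R_1=R_2\psi$, and on each $x_j$ or $x_j\inv$ by substituting the fixed word of paragraph~1 and using $\psi R_1=R_2\psi$, $\psi L_1=L_2\psi$; and the label rule holds because $\psi$ maps modified darts to modified darts, so $\ell_1'(x_j)=\lab(t,\ell_1(x_j))$ and $\ell_2'(\psi x_j)=\lab(t,\ell_2(\psi x_j))$ agree exactly because $\ell_1(x_j)=\ell_2(\psi x_j)$ already (and likewise for $x_j\inv$).

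\heading{From $M_i'$ to $M_i$.} Take $\psi'\in\Isoo(M_1',M_2')$. Since $\lab$ is injective and the step counter strictly increases through the whole reduction process, the darts modified by \emph{this} reduction are precisely those whose label has root $t$; hence $\psi'$ maps the modified darts of $M_1'$ onto those of $M_2'$, preserves the underlying labelling (this yields the label rule for $(M_1,M_2)$), and the $L$‑rule is again automatic. What remains is $\psi'R_1=R_2\psi'$. On unmodified darts it is inherited from $\psi'R_1'=R_2'\psi'$ exactly as above (both sides of an unmodified dart are unmodified). At a modified dart, $R_1$ is recovered from $R_1',L_1'$ by the word inverting the reduction — namely $R_1=(R_1')^2$ on the darts $x_j$ and $R_1=L_1'(R_1')^{-1}$ on the darts $x_j\inv$ — but this word depends on whether the dart is an $x_j$ (equivalently, lies on one of the inserted $d$‑faces $F_v$) or an $x_j\inv$. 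So the heart of the reverse implication is to show that $\psi'$ respects this dichotomy, i.e.\ that it sends each face $F_v$ of $M_1'$ to a face $F_{v'}$ of $M_2'$; granting this, $R_1$ and $R_2$ are described at every modified dart by the same word in the primed data, $\psi'R_1=R_2\psi'$ follows, and the $\Auto$ statement is the case $M_1=M_2$.

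\heading{Main obstacle.} The delicate point is exactly this last claim. The inserted cycles $C_v$ may share leaf vertices, and (since $\ell'$ tags the two kinds of new darts in the same way) they are not recoverable from the labels alone, so one must reconstruct them from the rotation and face structure of $M_i'$. I expect the argument to go by contradiction: an isomorphism $\psi'$ that swapped the two kinds of modified darts would, upon applying $\psi'R_1'=R_2'\psi'$ just past a bundle, be forced to send a modified dart to an unmodified one, violating label preservation; the only way to evade this is the degenerate, purely ``bipartite'' configuration in which every leaf of every star has all its neighbours in $\calL$ (so that no unmodified dart is ever encountered), and that residual case has to be excluded separately — e.g.\ by an Euler‑characteristic count showing there are too few faces of $M_i'$ to serve as the $\psi'$‑images of all the inserted $d$‑faces. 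Finally, some routine care is needed to check that degenerate incidences — parallel edges at a vertex of $\calL$, or a leaf lying on several $C_v$'s — do not disturb the local word calculus used throughout; the case of a loop at $v$ does not arise, since a vertex of large type has all neighbours of strictly larger degree and hence no loops.
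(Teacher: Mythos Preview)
Your forward direction is exactly the paper's argument (indeed the paper proves \emph{only} this direction): once $\psi$ is known to carry $\calL_1$ onto $\calL_2$, the three rules for $(M_1',M_2')$ follow because $L_i'=L_i$ and because $R_i'$ on each modified dart equals a fixed monodromy word in $R_i,L_i$. Where you go beyond the paper is in taking the reverse inclusion seriously, and you are right that it is not automatic: the labelling $\ell'(x_j)=\lab(t,\ell(x_j))$, $\ell'(x_j^{-1})=\lab(t,\ell(x_j^{-1}))$ does \emph{not} by itself separate the two kinds of modified darts, so one really must argue structurally that every $\psi'\in\Isoo(M_1',M_2')$ respects the $x_j$/$x_j^{-1}$ dichotomy before the word-by-word recovery of $R_i$ can be run.

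Unfortunately your proposed handling of the ``degenerate bipartite'' case cannot be completed, because that case genuinely breaks the lemma as written. Take $M$ to be the spherical embedding of $K_{2,4}$: two vertices $a,b$ of degree~$4$, four vertices $p,q,r,s$ of degree~$2$, four square faces. It is face-normal, the minimum degree is $d=2$, and $\calL=\{p,q,r,s\}$ are exactly the light vertices, all of large type. Applying $\largetype$ touches every dart (so the new labelling is constant) and produces the spherical dipole $D_8$: two vertices of degree~$8$, eight digons, four of them ``new'' faces $F_v$ and four of them shrunk old squares. Now $\Auto(M')=\Auto(D_8)$ has order~$16$ and acts regularly on darts, whereas $|\Auto(M)|=8$ (the map has vertices of two different degrees, hence at least two dart-orbits). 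The odd shift of the dipole --- the rotation sending $x^a_p$ to $(x^b_p)^{-1}$ --- lies in $\Auto(M')\setminus\Auto(M)$ and swaps $x_j$-type with $x_j^{-1}$-type darts, so no Euler-characteristic count can rule it out. The clean repair, which also makes the reverse direction a two-line check, is to label the two kinds asymmetrically, e.g.\ $\ell'(x_j):=\lab(t,0,\ell(x_j))$ and $\ell'(x_j^{-1}):=\lab(t,1,\ell(x_j^{-1}))$; with that change any $\psi'$ is forced to preserve the dichotomy, and your recovery of $R_i$ from $R_i',L_i'$ goes through verbatim.
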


\begin{proof}
Let $\psi \colon M_1 \to M_2$ be an isomorphism.
We prove that $\psi$ is also an isomorphism of $M_1'$ and $M_2'$.
We check the commuting rules~(\ref{eq:iso}) for $\psi$.

We have $L_i' = L_i$, for $i = 1,2$, so $L_1'\psi = \psi L_2'$.
For $R_1'$ and $R_2'$, we have
$$\psi R_1'x_i = \psi R_1^{-1}L_1 x_i = R_2^{-1}L_2\psi x_i = R_2'\psi x_i,$$
$$\psi R_1'x_i\inv = \psi R_1L_1 x_i\inv = R_2 L_2\psi x_i\inv = R_2'\psi x_i\inv,$$
proving that $\psi R_1' = R_2'\psi$.
Clearly, $\ell_1'(x_i) = \ell_2'(\psi x_i)$ if and only if $\ell_1(x_i) = \ell_2(\psi x_i)$.
Similarly for $x_i\inv$.
\end{proof}

\heading{Reduction $\aperiodic$.}
The number of possible aperiodic types is bounded by a function of $g$.
We put a lexicographic ordering on all aperiodic types.
The reduction, for a given aperiodic type $\calD$ canonically picks an edge incident to a vertex of type $\calD$, and contracts it.
The canonical choice of the edge is explained below.
The priority is given by the ordering of the degree types.
Note that, for example, vertices with types $(4,4,5,6)$ and $(4,4,5,5)$ have the same aperiodic type and they are processed at the same step.
It is essential that degree types of smaller lengths are of higher priority.

The input is a face-normal oriented map $M = (D,R,L,\ell)$ with no light vertices of large type.
Let $\calL$ be the list of all vertices $u$ with aperiodic type $\calD(u)=\calD$.
Let $u \in \calL$ with $\calD(u) = (m_0,\dots,m_{d-1})$ and let $v_0,\dots,v_{d-1}$ be the corresponding neighbors.
Since $\calD(u)$ is aperiodic, we can canonically choose an edge $e_k=uv_k$, where $k$ is the smallest index such that $m_k > d$.
We define $O := \{x \in D : (x,x\inv) = e_k, u \in \calL\}$ to be the set of darts associated to the canonically chosen edges.
The reduction contracts every edge in $O$.

\reduction
{$\aperiodic(M)$}
{$(2, \calD)$}
{Face-normal oriented map $M$ with no light vertices of large type and a list $\calL$ of light vertices with aperiodic degree type $\calD$.}
{Oriented map $M'$ with $V(M') = V(M)\setminus \calL$ and $D'\subseteq D$.}

\begin{lemma}
\label{lem:disjoint_stars}
The subgraph $X_O$ of the underlying graph of $M$ induced by $O$ is a disjoint union of stars.
\end{lemma}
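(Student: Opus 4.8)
The plan is to use a simple degree argument. Recall that the edges of $X_O$ are precisely the canonically chosen edges $e_k=uv_k$ for $u\in\calL$, where for a fixed $u$ the index $k$ is the smallest one with $m_k>d$; here $d=|\calD|=\deg(u)$ is the same for all $u\in\calL$, and such an index $k$ exists because an aperiodic degree type is in particular of small type, so at least one coordinate $m_j$ satisfies $m_j>d$. In particular $\deg(v_k)=m_k>d$.

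First I would record the following. The vertex set of $X_O$ is the union of $A:=\calL$, whose vertices all have degree $d$ in $M$, and $B:=\{v_k : u\in\calL\}$, whose vertices all have degree $>d$ in $M$. Since no vertex of $M$ can have degree both $d$ and $>d$, the sets $A$ and $B$ are disjoint, and every edge of $X_O$ runs between $A$ and $B$. Moreover, each $u\in A$ is incident in $X_O$ to exactly one edge, namely its own chosen edge $e_k$: indeed $u$ cannot be the $B$-endpoint $v_{k'}$ of the chosen edge of some other vertex $u'\in\calL$, because $\deg(u)=d<\deg(v_{k'})$. Hence every vertex of $A$ has degree $1$ in $X_O$.

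Then I would finish as follows. Let $C$ be a connected component of $X_O$. Since $V(X_O)$ consists only of endpoints of edges, $C$ contains at least one edge, hence a vertex $w\in B$. Every $X_O$-edge at $w$ has its other endpoint in $A$, and each such endpoint has $X_O$-degree $1$, so it is adjacent only to $w$. Therefore $C$ is exactly $\{w\}$ together with all the vertices $u\in\calL$ whose chosen edge ends at $w$, each joined to $w$ by a single edge; that is, $C$ is a star with centre $w$. Since $C$ was an arbitrary component, $X_O$ is a disjoint union of stars.

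I do not expect any real obstacle here. The only step requiring a little care is checking that, for every $u\in\calL$, the chosen edge $e_k$ genuinely has one endpoint ($u$) of degree $d$ and the other ($v_k$) of degree $>d$; this is exactly where the definition of an aperiodic (hence small) degree type is used, together with the fact that all vertices of $\calL$ share the degree $d=|\calD|$. Everything else is bookkeeping about components, and the lemma follows directly from the canonicity of the choice of $e_k$.
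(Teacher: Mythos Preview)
Your proof is correct and follows the same approach as the paper's own proof: every chosen edge joins a vertex of degree $d$ in $\calL$ to a vertex of strictly larger degree, and each vertex of $\calL$ is incident to exactly one such edge, which forces the components to be stars. You have simply spelled out in full the bipartition $A\cup B$ and the component argument that the paper compresses into two sentences.
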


\begin{proof}
Note that every edge joins a vertex of degree $d$ to a vertex of degree higher than $d$.
Moreover, due to the canonical choice of edges in $O$, we also have that for a vertex $u$ of type $\calD$, there is exactly one edge incident to $u$.
\end{proof}

The new map $M'=(D',R',L',\ell') =: \aperiodic(M)$ is defined as follows.
We set $D' := D\setminus O$.
For each $x\in D'$ we set $L'x=Lx$.
If $v$ is not in the subgraph induced by $X_O$, we set $R'_v := R_v$.
It remains to define $R'$ at the centers of the stars in $X_O$.
Let $u_0,u_1,\dots,u_{d-1}$ be the vertices with aperiodic degree type $\calD$ in a connected component of $X_O$ with the center $v$. 
Suppose that we have $R_{u_i} = (x_i,A_i)$, for some sequence of darts $A_i$, $i=0,\dots,d-1$, and $R_v = (Lx_0, B_0,\dots,Lx_{k-1},B_{k-1})$, for some (possibly empty) sequences of darts $B_i$, $i=0,1,\dots,d-1$.
For all such vertices $v$, the permutation $R'$ is defined by setting $R'_v := (A_0,B_0,\dots,A_{k-1}, B_{k-1})$.
In particular, we have $R'(R^{-1} x_i) = RLx_i$ and $R'(R^{-1} x_i\inv) = Rx_i$. 
Moreover, we set $\ell'(Rx_i) := \lab(t, \ell(Rx_i), \ell(x_i))$ and $\ell'(R^{-1} x_i) := \lab(t, \ell(R^{-1} x_i), \ell(x_i\inv))$.
For other darts $x$, we set $\ell'(x) := \ell(x)$.

\begin{lemma}
\label{lem:adt_subgroup}
Let $M_i=(D_i,R_i,L_i,\ell_i)$, $i=1,2$ where $D_1\cap D_2=\emptyset$, be face-normal oriented maps.
Let $M_1' := \aperiodic(M_1)$ and $M_2' := \aperiodic(M_2)$.
Then $\Isoo(M_1,M_2)_{\restriction{D_1'}} = \Isoo(M_1',M_2')$.
In particular, $\Auto(M_1)_{\restriction{D_1'}} = \Auto(M_1')$.
\end{lemma}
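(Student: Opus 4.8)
The plan is to prove both inclusions in $\Isoo(M_1,M_2)_{\restriction{D_1'}}=\Isoo(M_1',M_2')$, following the template of Lemmas~\ref{lem:loops} and~\ref{lem:romove_dipoles}, the statement about automorphism groups then following in the standard way (apply the first part to a disjoint isomorphic copy of $M_1$). For the forward inclusion, given $\psi\in\Isoo(M_1,M_2)$, the first point is that $\psi$ respects the entire situation on which $\aperiodic$ acts: the degree type, and hence the aperiodic type, is an isomorphism invariant and the vertices of a fixed degree type form a union of orbits, so $\psi(\calL_1)=\calL_2$; moreover, since $\psi$ conjugates $R_1$ to $R_2$ and preserves degrees, the basepoint of the neighbour sequence singled out by the lexicographically minimal representative of $\calD(u)$ — unique precisely because $\calD$ is aperiodic — is transported by $\psi$, so the canonically chosen edge $e_k$ at $u$ is carried to the canonically chosen edge at $\psi(u)$. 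Hence $\psi(O_1)=O_2$, $\psi$ carries the stars of $X_{O_1}$ (Lemma~\ref{lem:disjoint_stars}) to those of $X_{O_2}$ preserving centres and leaves, and $\psi':=\psi_{\restriction{D_1'}}$ is a bijection $D_1'\to D_2'$.

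It then remains to verify the commuting rules~(\ref{eq:iso}) for $\psi'$, which is routine. The rule for $L$ is immediate since $L_i'$ is a restriction of $L_i$. For $R$, one notes that $R_i'$ differs from $R_i$ exactly on the set $\{R_i^{-1}x:x\in O_i\}$, where $R_i'(R_i^{-1}x)=R_iL_ix$; on this set the rule for $\psi'$ follows from $\psi R_1=R_2\psi$, $\psi L_1=L_2\psi$, $\psi(O_1)=O_2$ and the observation that the darts $R_i^{-1}x$ and $R_iL_ix$ both lie in $D_i'$, by the same kind of one-line computation as in the proof of Lemma~\ref{lem:loops}; off this set the rule is inherited from $\psi$, since there $R_i'$ agrees with $R_i$ and $\psi$ sends "unchanged" darts to "unchanged" darts. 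For $\ell$, the only relabelled darts are $R_ix$ and $R_i^{-1}x$ as $x$ runs over the "leaf-to-centre" darts of the contracted edges, carrying $\lab(t,\ell_i(R_ix),\ell_i(x))$ and $\lab(t,\ell_i(R_i^{-1}x),\ell_i(x\inv))$ respectively; since $\lab$ is injective and $\psi$ satisfies $\ell_1=\ell_2\psi$ at every dart (including the deleted ones), $\ell_1'=\ell_2'\psi'$ follows.

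For the reverse inclusion, take $\psi'\in\Isoo(M_1',M_2')$. As in Lemma~\ref{lem:loops}, because the step counter $t$ strictly increases and $\lab$ is injective, $\psi'$ maps the darts relabelled at this step onto the corresponding darts of $M_2'$, and decoding their labels recovers, for each contracted edge $(x,x\inv)$ of $M_1$, the surviving dart carrying $\ell_1(x)$ (namely $R_1x$) and the surviving dart carrying $\ell_1(x\inv)$ (namely $R_1^{-1}x$), together with these two labels; $\psi'$ transports this data to the analogous data in $M_2'$. Using the identities $x=R_1^{-1}(R_1x)$ and $x\inv=L_1x$, one extends $\psi'$ to a bijection $\psi\colon D_1\to D_2$ by setting $\psi(x):=R_2^{-1}\bigl(\psi'(R_1x)\bigr)$ and $\psi(x\inv):=L_2\psi(x)$ for each leaf-to-centre dart $x$ of a contracted edge (and $\psi:=\psi'$ on $D_1'$), and then checks from $\psi'R_1'=R_2'\psi'$, $\psi'L_1'=L_2'\psi'$, $\ell_1'=\ell_2'\psi'$ that $\psi\in\Isoo(M_1,M_2)$.

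The delicate point, and the main obstacle, is precisely this extension step. To know that $\psi$ is well defined and a bijection one needs $\psi'(R_1x)$ to have the form $R_2y$ with $y\in O_2$ — that is, $\psi'$ must not send an "$R_1x$-type" marked dart to an "$R_2^{-1}y$-type" marked dart — so the labelling convention of $\aperiodic$ must genuinely distinguish the two marked darts attached to a contracted edge; and one must handle separately the degenerate aperiodic type of length two, where $R_1x=R_1^{-1}x$ is a single surviving dart and the label stored there has to encode both $\ell_1(x)$ and $\ell_1(x\inv)$. This, together with the use of Lemma~\ref{lem:disjoint_stars} to ensure that the rotation surgery at each merged centre is unambiguous, is where the real content lies; the remaining verifications of~(\ref{eq:iso}) proceed exactly as in the two preceding lemmas.
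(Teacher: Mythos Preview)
Your approach is essentially the same as the paper's, and the overall architecture---forward direction by restriction, converse by label-decoding and extension---matches exactly. One technical point deserves correction, though.

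You write that $R_i'$ differs from $R_i$ exactly on $\{R_i^{-1}x:x\in O_i\}$, with $R_i'(R_i^{-1}x)=R_iL_ix$. This formula is not always correct. In the notation of the paper, where the star centre $v$ has rotation $R_v=(Lx_0,B_0,\dots,Lx_{k-1},B_{k-1})$ with possibly empty $B_i$, consider a leaf dart $x_i$ with $B_i=\emptyset$. Then $R_iL_ix_i=R_i(x_i^{-1})=Lx_{i+1}\in O_i$, so your formula lands in a deleted dart rather than in $D_i'$. The correct value there is the first dart of $A_{i+1}$, namely $R_iL_iR_iL_ix_i$. The paper handles this by observing that in every case $R_1'y=w(R_1,L_1)y$ for some monodromy word $w$ determined purely by the local combinatorics (which $B_j$ are empty), and that $\psi$ transports this combinatorics, so $R_2'(\psi y)=w(R_2,L_2)(\psi y)$ for the \emph{same} word $w$; the commuting rule then follows. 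Your ``one-line computation'' needs this case split.

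Your discussion of the converse is more careful than the paper's (which dispatches it as ``straightforward''). The worry you raise about distinguishing the $R_ix$-type from the $R_i^{-1}x$-type marked darts, and the degree-two degenerate case where these coincide, is legitimate; the paper does not address it explicitly. In practice the resolution is that the extension is determined not by the label alone but by the structural position: one recovers $x_i$ as $R_1y_i$ where $y_i=R_1^{-1}x_i$ is identified (via its step-$t$ label) as a marked predecessor dart, and then $\psi x_i:=R_2\psi'(y_i)$. That $\psi'(y_i)$ is itself such a predecessor dart follows because $\psi'$ preserves the step-$t$ labels and the injectivity of $\lab$.
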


\begin{proof}
Let $\psi \colon M_1 \to M_2$ be an isomorphism.
We prove that $\psi' = \psi_{\restriction{D_1'}}$ is an isomorphism of $M_1'$ and $M_2'$.
Since $\psi$ preserves the set $O$, the mapping $\psi'$ is a well-defined bijection.
We check the commuting rules~(\ref{eq:iso}) for $\psi'$.

By the definition, $L_i' = L_i{_{\restriction{D_i'}}}$, for $i = 1,2$.
Thus, we have $\psi'L_1' = L_2'\psi'$.
For $R_1'$ and $R_2'$, it suffices to check the commuting rule at $y_i = R_1^{-1}x_i$, and at $z_i = R_1^{-1}x_i\inv$ such that $z_i \notin O$.
By the definition of $R_1'$ and $R_2'$, we have
$R_1'y_i = RLRy_i$ if $B_i \neq \emptyset$ and $R_1'y_i = R_1LR_1L_1R_1y_i$ if $B_i = \emptyset$, and $R_1'z_i = R_1L_1R_1z_i$ if $B_i \neq \emptyset$.
In general, for $x \in \{y_i,z_i\}$, we have
$$R_1'x = w(R_1,L_1)x \quad\text{and}\quad R_2'\psi x = w(R_2,L_2)\psi x,$$
where $w(R_i,L_i)$, for $i = 1,2$, is a word in terms $R_i$ and $L_i$ defining an element in the respective monodromy group.
We have
$$\psi' R_1' x = \psi w(R_1,L_1)x = w(R_2,L_2)\psi x = R_2'\psi' x.$$
For $\ell_1'$ and $\ell_2'$, we have
$$\ell_1'(R_1x_i) = \lab(t,\ell_1(R_1 x_i),\ell_1(x_i)) = \lab(t,\ell_2(R_2 \psi x_i),\ell_2(\psi x_i)) = \ell_2'(R_2\psi x_i)$$
if and only if
$$\ell_1(R_1x_i) = \ell_2(R_2\psi x_i)\quad\text{and}\quad \ell_1(x_i) = \ell_2(\psi x_i),$$
which is true since $\psi$ is an isomorphism.

Conversely, let $\psi'\colon M_1'\to M_2'$ be an isomorphism.
By definition, we have $x_i = R_1 y_i$.
Since $\lab$ is injective, it follows that there is $R_2\psi' y_i$ in $D_2\setminus D_2'$.
We set $\psi x_i := R_2\psi' y_i$.
It is straightforward to check that $\psi \in \Isoo(M_1,M_2)$.
\end{proof}

\heading{Reduction $\periodic$.}
Here we assume that we have a face-normal oriented map $M = (D,R,L,\ell)$ that has no vertices of large type and no vertices of small aperiodic type.
It follows that the only vertices left are of small periodic type or of homogeneous type.
For a vertex of periodic type, it is not possible to canonically select an edge, therefore, a special type of operation is required.
Informally, if a vertex has type $(d,m_1,\dots,m_k,\ \dots\ ,d,m_1,\dots,m_k)$, then we add a polygon bounded by the neighbours of degree greater than $d$.

\reduction
{$\periodic(M)$}
{$(2, d)$}
{Face-normal oriented map $M$ without aperiodic vertices and a list $\calL$ of light vertices of degree $d$.}
{Oriented map $M'$ with $V(M') = V(M)\setminus \calL$ and $\Aut(M) \cong \Aut(M')$.}

More formally, let $v \in\calL$ be a vertex of small periodic degree type
$$\calD = (d,m_0\dots,m_{k-2},d,m_0,\dots,m_{k-2},\ \dots\ , d,m_0,\dots,m_{k-2}),$$ where $d = r k$, and let
$$v_0,u_0,\dots,u_{k-2}, v_1, u_{k-1},\dots,u_{2k-3},\ \dots\ ,v_{r-1},u_{(r-1)(k-1)},\dots,u_{(r-1)(k-1) + k-2}$$
be the corresponding neighbours.
Let
$$R_v = (x_0,y_0,\dots,y_{k-2}, x_1, y_{k-1},\dots,y_{2k-3},\ \dots\ ,x_{\ell-1},y_{(r-1)(k-1)},\dots,y_{(r-1)(k-1) + k-2}).$$
Suppose that $R_{v_i} = (x_i\inv, z_i, A_i)$, for $i = 0,\dots,r-1$, and $R_{u_j} = (y_j\inv, B_j)$, for $j = 0,\dots,(r-1)(k-1)+k-2$, where each $A_i$ and $B_j$ is some sequence of darts.
%Let $w_i$ be the neighbour of $v_i$ such that $R_{w_i} = (z_i\inv,C_i)$, for $i = 0,\dots,\ell-1$, where each $C_i$ is some sequence of darts.

\begin{figure}[t]
\centering
\includegraphics{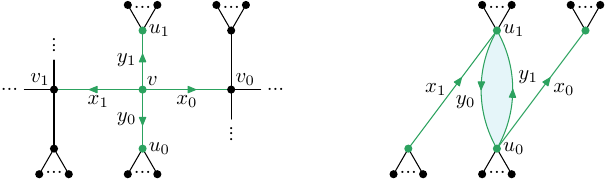}
\caption{Local view of the periodic small degree type reduction.}
\label{fig:pdt_detail}
\end{figure}

\begin{figure}[b]
\centering
\includegraphics{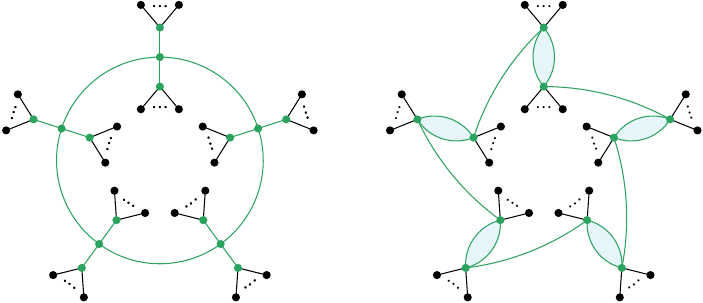}
\caption{Global view of the periodic small degree type reduction.}
\label{fig:pdt}
\end{figure}

The new map $M' = (D',R',L',\ell') =: \periodic(M)$ is defined as follows.
First, we put $D' := D$.
For every $v \in\calL$, we do the following simultaneously.
First, we remove $R_v$.
Then we put 
$$
R'_{u_j} :=
\begin{cases}
(y_j\inv, y_{j-1}, x_i, B_j) \quad \text{if } j = i(k-1),\\
(y_j\inv, y_{j-1}, B_j).
\end{cases}
$$
Finally, we put $L' := L$.
For every $x \in \{x_i, y_j : i = 0,\dots,k-1, j = 0\dots,(r-1)(k-1)+k-2\}$, we put $\ell'(x) := \lab(s, \ell(x))$, and for every other dart $x$, we put $\ell'(x) := \ell(x)$; see Figure~\ref{fig:pdt_detail} and~\ref{fig:pdt}.

\begin{lemma}
\label{lem:periodic}
Let $M_i=(D_i,R_i,L_i,\ell_i)$, $i=1,2$ and $D_1\cap D_2=\emptyset$, be labeled oriented maps.
Let $M_1' := \periodic(M_1)$ and $M_2' := \periodic(M_2)$.
Then $\Isoo(M_1,M_2) = \Isoo(M_1',M_2')$.
In particular, $\Auto(M_1) = \Auto(M_1')$.
\end{lemma}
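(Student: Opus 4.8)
The plan is to follow the same template as the proofs of Lemmas~\ref{lem:loops}--\ref{lem:adt_subgroup}, closest in spirit to Lemma~\ref{lem:adt_subgroup}: a forward implication checking the commuting rules~(\ref{eq:iso}), and a backward implication recovering the original map from the reduced one via the injective labelling $\lab$. Since the $\periodic$ reduction deletes no darts ($D_1'=D_1$ and $D_2'=D_2$; it only rewires $R$ at finitely many darts and relabels some of them), no restriction of $\psi$ is needed: I will show that one and the same bijection $\psi\colon D_1\to D_2$ satisfies~(\ref{eq:iso}) for $(M_1,M_2)$ if and only if it satisfies~(\ref{eq:iso}) for $(M_1',M_2')$. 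In both directions I use that degree types, hence the property of being a light vertex of degree $d$ of periodic type $\calD$, are preserved by isomorphisms, so that any isomorphism maps $\calL_1$ onto $\calL_2$ and therefore performs the surgery at corresponding vertices.

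For the forward direction, let $\psi\colon M_1\to M_2$ be an isomorphism. The rule $\psi L_1'=L_2'\psi$ is immediate because $L_i'=L_i$. The permutation $R_i'$ agrees with $R_i$ except at the finitely many darts incident to some $v\in\calL_i$ or to its neighbours, namely the darts $y_j\inv$, the darts $y_{j-1}$ that lay in the old $R_v$, and, when $j=i(k-1)$, the darts $x_i$. Inspecting the definition of $R_{u_j}'$ shows that at each such dart $x$ one has $R_1'x=w_x(R_1,L_1)\,x$, where $w_x$ is a fixed word in $R$ and $L$ recording the local navigation around the removed vertex (for instance $R'y_j\inv=R\inv L\,y_j\inv$ when $j\ne i(k-1)$, and $R'x_i=RLR\,x_i$), and likewise $R_2'\psi x=w_x(R_2,L_2)\,\psi x$; since $\psi$ intertwines $R_i$ and $L_i$, we obtain $\psi R_1'x=R_2'\psi x$, hence $\psi R_1'=R_2'\psi$. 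Finally, the darts relabelled by this step are exactly the $x_i$ and $y_j$, each receiving the label $\lab(t,\ell(\cdot))$, all other labels being unchanged; by injectivity of $\lab$, $\ell_1'=\ell_2'\psi$ holds if and only if $\ell_1=\ell_2\psi$, which holds. Thus $\psi$ is an isomorphism $M_1'\to M_2'$.

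For the backward direction, let $\psi'\colon M_1'\to M_2'$ be an isomorphism. Exactly as in Lemmas~\ref{lem:loops} and~\ref{lem:romove_dipoles}, every dart relabelled by the $\periodic$ step carries a label of the form $\lab(t,\cdot)$ with $t$ the current step number; since $\lab$ is injective and $t$ is incremented after every elementary reduction, $\psi'$ cannot send such a marked dart to an unmarked one, so it maps the marked darts of $M_1'$ onto those of $M_2'$. Using the marked darts one locates, inside $M_i'$, the faces created by this step — one for each $v\in\calL_i$, whose boundary walk runs through the marked darts $y_j$ in their original cyclic order — and the vertex $v$ together with its rotation $R_v$ is reconstructed canonically from such a face and the marked spoke darts $x_i$ attached to it, interleaved as prescribed by $\calD$. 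Since this reconstruction depends only on $R_i',L_i',\ell_i'$, and $\psi'$ commutes with $R_i'$, commutes with $L_i'=L_i$, and preserves $\ell_i'$, it follows that $\psi'$ commutes with $R_1$ and preserves $\ell_1$, i.e.\ $\psi'\in\Isoo(M_1,M_2)$. Taking $M_1=M_2$ gives $\Auto(M_1)=\Auto(M_1')$.

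I expect the backward direction to be the main obstacle. The forward direction is routine word-chasing, essentially identical to Lemma~\ref{lem:adt_subgroup}. The difficulty is to make precise that the labels $\lab(t,\cdot)$ encode enough information to reconstruct $R_i$ from $R_i'$ by a rule that is canonical, hence automatically respected by any label-preserving isomorphism. This is more delicate than in the loop and dipole reductions, where the removed darts form short, easily recognised chains: here the whole rotation $R_v$ is deleted at once and its darts are redistributed around the cycle $v_0,\dots,v_{r-1}$, so one must verify that the new face, its marked boundary darts, and the marked spokes $x_i$ pin down both $v$ and the exact cyclic word $R_v$ with no ambiguity — in particular, no ambiguity stemming from the periodicity of the type $\calD$.
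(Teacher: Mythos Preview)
Your forward direction is exactly the paper's argument: write $R_i'$ at each modified dart as a fixed monodromy word $w(R_i,L_i)$ and use that $\psi$ intertwines $R_i,L_i$. The specific words you quote ($R'y_j^{-1}=R^{-1}L\,y_j^{-1}$ for $j\ne i(k-1)$, $R'x_i=RLR\,x_i$) match the paper's displayed identities.

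Where you diverge is the backward direction. The paper does \emph{not} carry out the reconstruction argument you sketch; it simply records the one line ``$\ell_1'=\ell_2'\psi$ if and only if $\ell_1=\ell_2\psi$'' and stops, exactly as in Lemma~\ref{lem:ldt_subgroup}. The implicit reasoning is that since $D_i'=D_i$ and the step-$t$ labels single out the modified darts, the same word-chasing can be run in reverse (express $R_i$ at each modified dart as a word in $R_i',L_i'$), so the verification is symmetric. Your reconstruction via ``the new face bounded by the $y_j$'' is a valid alternative way to organise this, but it is more than the paper does, and your anticipated difficulty with periodicity does not arise: the labels mark individual darts, not merely their types, so there is no ambiguity in recovering $R_v$. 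In short, the backward direction is not the obstacle you expect; the paper treats it as immediate once the forward word identities are written down.
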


\begin{proof}
Let $\psi \colon M_1 \to M_2$ be an isomorphism.
We prove that $\psi$ is also an isomorphism of $M_1'$ and $M_2'$.
We check the commuting rules~(\ref{eq:iso}) for $\psi$.

We have $L_i' = L_i$, for $i = 1,2$, so $L_1'\psi = \psi L_2'$.
With the above notation, for $R_1'$ and $R_2'$, we have
$$\psi R_1'x_i = \psi R_1L_1R_1 x_i = R_2L_2R_2\psi x_i = R_2'\psi x_i,$$
$$\psi R_1'y_j\inv = \psi R_1\inv L_1y_j\inv = R_2\inv L_2 \psi y_j\inv = R_2'\psi y_j\inv, \text{for } j\neq i(k-1),$$
$$\psi R_1'y_j\inv = \psi (R_1\inv)^2 L_1y_j\inv = (R_2\inv)^2 L_2 \psi y_j\inv = R_2'\psi y_j\inv, \text{for } j =  i(k-1),$$
$$\psi R_1'y_j = \psi L_1 R_1 y_j = L_2 R_2\psi y_j = R_2' \psi y_j, \text{for } j \neq i(k-1),$$
$$\psi R_1'y_j = \psi L_1 (R_1)^2 y_j = L_2 (R_2)^2\psi y_j = R_2' \psi y_j, \text{for } j = i(k-1),$$
proving that $\psi R_1' = R_2'\psi$.
Clearly, $\ell_1' = \ell_2'\psi$ if and only if $\ell_1 = \ell_2\psi$.
\end{proof}

\subsection{Refined degree type}

For a light vertex $u$ of degree $d$, the refined degree type $\calR(u) = (\redeg(u_0),\dots,\redeg(u_{d-1}))$ is
\begin{packed_itemize}
\item
\emph{large} if $\redeg(u) < \redeg(u_0)$,
\item
\emph{small} if $\redeg(u) = \redeg(u_0)$ and there exists $i > 0$ such that $\redeg(u) < \redeg(u_i)$,
\item
\emph{homogeneous}  if $\redeg(u) = \redeg(u_i)$ for all $i=0,\dots, d-1$.
\end{packed_itemize}
A small refined degree type is called \emph{periodic} if it can be written in the form 
$$(r_0,r_1\dots,r_{k},r_0,r_1,\dots,r_{k},r_0,r_1,\dots,r_{k})$$
where the subsequence $r_0,r_1\dots,r_{k}$ of refined degrees occurs at least  twice.
Since the refined degree of a light vertex $u$ is of length at least five, we have that a small refined degree type $\calR(u)$ is \emph{periodic} if $\deg(u) = 4$ and  $\redeg(u) = \redeg(u_0) = \redeg(u_2)$ and $\redeg(u_1) = \redeg(u_3)$,
and $\calR(u)$ is \emph{aperiodic} otherwise.

Clearly, the application of the reductions described in Subsections~3.1 and 3.2
yields either a uniform map, or a non-uniform $k$-valent map. In the second case, we will continue using the same set of
reductions, but replacing degree types with refined degree types.

We say that $v$ is a \emph{refined light vertex} if it is incident to a light face.
By Theorem~\ref{thm:light} a $k$-valent spherical map contains a refined light vertex.

If there is a refined light vertex $v$ with $\calR(v)$ large, we apply
reduction $\largetype$ with $d=k$.  Otherwise every refined light vertex $v$ has
$\calR(v)$ small or homogeneous.  If there is a refined light vertex $v$ with
$\calR(v)$ small and aperiodic, we pick the one with the smallest $\calR(v)$
and apply $\aperiodic$ with $\calD$ being a refined degree type.  If all the refined
light vertices are periodic, we apply $\periodic$.  Otherwise, the refined degree
types at all vertices are homogeneous, and consequently $M$ is uniform.  Note
that application of one of these reductions to a $k$-valent maps typically
produces a map which is not $k$-valent.

\section{Irreducible maps on orientable surfaces}
\label{sec:uniform}

In this section, we provide an algorithm computing the automorphism group of irreducible oriented maps, with fixed Euler characteristic, in linear time.
The proof is split into three parts: negative Euler characteristic (Section~\ref{sec:uniform_negative}), sphere (Section~\ref{sec:uniform_sphere}), and torus (Section~\ref{sec:uniform_torus}).

\subsection{Surfaces of negative Euler characteristic}
\label{sec:uniform_negative}

If the Euler characteristic $\chi$ is negative, the irreducible maps are exactly all the uniform face-normal maps.
We prove that the number of uniform face-normal maps is bounded by a function of $\chi$.
Therefore, generators of the automorphism group can be computed by a brute force approach.
Note that the following lemma does not require the underlying surface to be orientable, it only requires $\chi$ to be negative.

\begin{proposition}
\label{prop:uniform_negative_characteristic}
The number of uniform face-normal maps on a closed compact surface $S$ with Euler characteristic $\chi(S) < 0$ is bounded by a function of $\chi(S)$.
\end{proposition}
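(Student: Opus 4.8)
The plan is to bound separately the number of possible \emph{combinatorial types} of uniform face-normal maps and then the number of maps of each type. Let $M$ be a uniform face-normal map on $S$ with $\chi := \chi(S) < 0$, and let $(f_1,\dots,f_d)$ be the common local type of every vertex, so every vertex has degree $d$ and the multiset of incident face-degrees is $\{f_1,\dots,f_d\}$, each $f_i \ge 3$. First I would observe that $d \le 6(1-\chi)$ by Theorem~\ref{thm:light}, since $M$ is face-normal and $\chi \le 0$; in particular $d$ is bounded. Next I would bound the face degrees. Double-counting vertex-face incidences (each vertex contributes $d$ of them, each $j$-face contributes $j$), we get $\sum_{\text{faces } F} \deg(F) = d\,v(M)$, hence $e(M) = \tfrac12 d\,v(M)$ and $f(M) = \sum 1$ over faces. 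Plugging $v - e + f = \chi$ and using $e = \tfrac12 d v$ gives $f(M) = \chi - v(M)(1 - \tfrac{d}{2})$. Since each face has degree $\ge 3$ we have $f(M) \le \tfrac{2}{3} e(M) = \tfrac{d}{3} v(M)$; combining with the previous identity and $\chi < 0$ yields an upper bound $v(M) \le C(\chi)$ for some function $C$ depending only on $\chi$, \emph{provided} $1 - d/2 < 0$, i.e. $d \ge 3$. The degenerate cases $d = 1$ and $d = 2$ must be handled by hand: a uniform face-normal map with $d=1$ is a bouquet-like object which is not face-normal unless it has no edges (impossible for a map), and $d=2$ forces every face to be bounded by a closed walk of a cycle, forcing $\chi \ge 0$, contradiction — so $d \ge 3$ is automatic for $\chi < 0$.

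With $v(M) \le C(\chi)$ in hand, $e(M) = \tfrac12 d\,v(M) \le 3(1-\chi)C(\chi)$ and $f(M) \le \tfrac{d}{3}v(M)$ are likewise bounded, so the total size $\|M\|$ (number of flags, which is $4e(M)$, or in the oriented setting the number of darts $2e(M)$) is bounded by a function of $\chi$ alone. Finally, the number of distinct maps of bounded size is finite: a combinatorial map is determined by a finite set of darts/flags together with finitely many fixed-point-free involutions and a rotation on them, so up to isomorphism there are at most $(2e(M))!^{\,O(1)}$ of them, which is bounded once $e(M)$ is bounded. Assembling: the number of uniform face-normal maps on $S$ is at most $\sum_{e \le E(\chi)} (2e)!^{\,O(1)} =: g(\chi)$, a function of $\chi$ only.

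The main obstacle I anticipate is making the inequality chain $v(M) \le C(\chi)$ fully rigorous and checking it does not secretly need orientability or connectivity in a hidden way. The key point is the strict inequality $1 - d/2 < 0$: when $d \ge 3$ the coefficient of $v(M)$ in $f(M) = \chi - v(M)(1 - d/2)$ is positive, and pairing this with the face-normality bound $f(M) \le \tfrac{d}{3}v(M)$ gives $\chi + v(M)(d/2 - 1) \le \tfrac{d}{3} v(M)$, i.e. $v(M)(d/6 - 1) \le -\chi$; this is only useful when $d > 6$, so for the \emph{worst} range $3 \le d \le 6$ one instead argues directly that a uniform face-normal map with $d \le 6$ and $\chi < 0$ cannot exist at all (the average-degree computation in the proof of Theorem~\ref{thm:light} shows $\overline d \le 6$ forces $\chi \ge 0$ for triangulations, and a general face-normal map is a "sub-triangulation" so the same sign conclusion holds), leaving only finitely many $d$ in the range $6 < d \le 6(1-\chi)$, each giving $v(M) \le -\chi/(d/6-1) \le -6\chi$. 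So in fact $v(M) \le -6\chi$ uniformly, and the rest follows. The care needed is exactly in this case split around $d = 6$, and in confirming that "face-normal" plus $\chi<0$ already rules out the borderline $d \le 6$ cases; once that is pinned down the counting is routine.
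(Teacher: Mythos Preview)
Your argument has a genuine gap in the range $3 \le d \le 6$. The claim that ``a uniform face-normal map with $d \le 6$ and $\chi < 0$ cannot exist at all'' is false: the homogeneous map of type $\{7,3\}$ (every vertex trivalent, every face a heptagon) on the Klein quartic is a uniform face-normal map with $d = 3$ on a surface with $\chi = -4$; more generally, $\{n,3\}$-maps with $n \ge 7$ live on many hyperbolic surfaces and all have $d = 3$. Your appeal to the proof of Theorem~\ref{thm:light} does not help here: that computation shows that a \emph{triangulation} with $\chi < 0$ has $\bar d > 6$, but a face-normal map is not a triangulation, and passing to one changes the vertex degrees.

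The underlying reason your inequality chain stalls is that in bounding $v$ you only use $d$-regularity together with the crude face bound $f \le \tfrac{d}{3}v$ coming from face-normality; you never use the full uniformity hypothesis that every vertex has the \emph{same local type} $(f_1,\dots,f_d)$. For fixed $d \le 6$ and fixed $\chi < 0$ there are $d$-regular face-normal maps with arbitrarily many vertices, so no bound on $v$ can come from those hypotheses alone. What uniformity actually gives is the exact relation
\[
v\Bigl(\tfrac{d}{2} - 1 - \sum_{i=1}^{d}\tfrac{1}{f_i}\Bigr) \;=\; -\chi,
\]
obtained by counting faces of each degree, and one then needs a positive lower bound on the bracketed quantity over all \emph{realizable} local types. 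That lower bound is exactly the Hurwitz-type input the paper invokes (via Babai), yielding $v \le 84|\chi|$. Your coarse inequality $v(d/6 - 1) \le -\chi$ is the special case where one throws away the $f_i$ and uses only $f_i \ge 3$; it works for $d \ge 7$ but is vacuous for $d \le 6$, and there is no way to close that gap without bringing the $f_i$ back in and appealing to a Hurwitz-type bound.
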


\begin{proof}
Babai noted in~\cite[Theorem 3.3]{babai_vertex_transitive} that the Hurwitz Theorem (see, e.g. \cite{Biggs_White} or \cite{tucker_gross}) implies that the number of vertices of a uniform map $M$ on $S$ is at most $84|\chi(S)|$.
By Theorem~\ref{thm:light}, the degree of a vertex of $M$ is bounded by a function of $\chi(S)$ as well.
Therefore, the number of edges is also bounded by a function of $\chi(S)$ and the theorem follows.
\end{proof}

\begin{corollary}
\label{cor:uniform_negative}
Let $M = (D,R,L)$ be a uniform face-normal map $M = (D,R,L)$ on an orientable surface $S$ with $\chi(S) < 0$.
Then $\Aut(M)$ can be computed in time $f(\chi(S))|D|$, for some computable function $f$.
\end{corollary}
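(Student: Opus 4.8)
The plan is to use Proposition~\ref{prop:uniform_negative_characteristic} to bound the size of $M$, and then appeal to the semiregularity of map automorphisms (together with the corollary following Theorem~1.4 in the excerpt) to enumerate all candidate automorphisms by brute force. Concretely, since $M$ is irreducible and $\chi(S)<0$, the map $M$ is uniform and face-normal; by Proposition~\ref{prop:uniform_negative_characteristic} there is a computable function $h$ with $v(M)\le h(\chi(S))$, and by Theorem~\ref{thm:light} the vertex degree is at most $6(1-\chi(S))$, so $|D| = 2e(M) \le v(M)\cdot 6(1-\chi(S)) \le h(\chi(S))\cdot 6(1-\chi(S)) =: N(\chi(S))$. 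Thus $|D|$ itself is bounded by a function of $\chi(S)$.

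First I would fix a dart $x_0 \in D$. By the theorem following Theorem~1.4 (semiregularity), every element of $\Auto(M)$ is determined by the image of $x_0$, so $|\Auto(M)| \le |D|$, and the candidate images $y$ range over the at most $|D|$ darts of $D$. For each such $y$, by the corollary to that theorem we can decide in time $\calO(|D|)$ whether there is an isomorphism $M \to M$ sending $x_0 \mapsto y$, and if so construct it (by propagating along the generators $R$ and $L$ of the monodromy group, which is the standard breadth-first determination of the unique candidate bijection followed by a check of the commuting rules~\eqref{eq:iso}). Collecting the $y$ for which this succeeds yields the full set $\Auto(M)$ as an explicit list of permutations, in total time $|D|\cdot\calO(|D|) = \calO(|D|^2) = \calO(N(\chi(S))^2)$. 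Since $\chi(S)$ is fixed, this is a constant, which is certainly $\le f(\chi(S))|D|$ once we absorb $N(\chi(S))^2$ into $f$; in fact because $|D|\ge 1$ always, any bound depending only on $\chi(S)$ is of the required form $f(\chi(S))|D|$. To also obtain $\Aut(M)$ rather than just $\Auto(M)$, I would in the same way test for each $y$ whether there is an isomorphism $M \to M\inv$ sending $x_0\mapsto y$; if any exists, pick one such $\varphi$, and then $\Aut(M) = \Auto(M)\cup \varphi\,\Auto(M)$, computed within the same time budget.

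Finally I would extract a generating set from the list: since $\Auto(M)$ (and $\Aut(M)$) has order at most $2|D|\le 2N(\chi(S))$, even returning the entire group as its own generating set is within the claimed bound; alternatively a standard Sims-style sifting produces $\calO(\log|D|)$ generators, but this refinement is not needed for the statement. There is essentially no hard step here — the content is entirely in Proposition~\ref{prop:uniform_negative_characteristic} (which is already proved) and in the semiregularity corollary. The only point requiring a word of care is that the bound is stated as $f(\chi(S))|D|$ rather than $f(\chi(S))$: this is harmless because $|D|\ge 1$, so a pure function-of-$\chi$ bound trivially implies the stated form; I would remark on this explicitly so the reader is not puzzled by the apparently generous linear factor.
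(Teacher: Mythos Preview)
Your proposal is correct and matches the paper's approach: the paper states the corollary without proof, treating it as an immediate consequence of Proposition~\ref{prop:uniform_negative_characteristic} via brute force, which is exactly what you spell out. Your explicit handling of the $f(\chi(S))|D|$ form and the extension to $\Aut(M)$ via $M\inv$ are fine elaborations of what the paper leaves implicit.
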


\subsection{Sphere}
\label{sec:uniform_sphere}

By the definition of the reductions in Section~\ref{sec:reductions}, the irreducible spherical maps are the two-skeletons of the five Platonic solids, $13$ Archimedean solids, pseudo-rhombicuboctahedron, prisms, antiprisms, cycles, dipoles, and bouquets.

In the first three cases, the automorphism group can be computed by a brute force approach.
We show that for (labeled) prisms, antiprisms, dipoles and bouquets, the problem can be reduced to computing the automorphism group of a cycle.

\begin{lemma}
For every labeled map $M$ which is a prism, an antiprism, a dipole or a bouquet, there is a labeled cycle $M'$ such that $\Auto(M) \cong \Auto(M')$.
Moreover $M'$ can be constructed in linear time.
\end{lemma}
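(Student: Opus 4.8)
The plan is to exhibit, for each of the four families, an explicit linear-time construction of a labeled cycle $M'$ together with a bijection between darts that visibly intertwines the rotation, the dart-reversing involution, and the labeling, so that the centralizers in the respective symmetric groups coincide. The unifying observation is that each of these maps carries a natural \emph{equatorial cycle} (or can be dualized to a spherical cycle) whose length is a divisor of, or equal to, the number of symmetries we wish to capture, and that $\Auto$ of a spherical cycle on $n$ edges is the cyclic group $\cyc_n$ acting regularly on one set of darts. Since the statement only asks for orientation-preserving automorphisms, we never need to worry about reflections here.

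First I would dispose of the easy cases. A \emph{dipole} is by definition dual to a spherical cycle, and $\Auto$ is invariant under duality (Section~\ref{sec:preliminaries}: $M^* = (D,R^{-1}L,L,\ell)$ has the same dart set, and $\Auto$ is the centralizer of $\langle R,L\rangle = \langle R^{-1}L, L\rangle$); so for a dipole we simply take $M' = M^*$, carrying the labels along unchanged, and we are done. A \emph{bouquet} is dual to a planted star; it has a single vertex, so its monodromy group fixes the face structure trivially and $\Auto(M)$ is the stabilizer of the cyclic arrangement of loops together with their labels — this is again the centralizer of a single cyclic permutation of the darts, i.e. a cyclic group, which is exactly $\Auto$ of a labeled cycle whose edge labels record the (symmetrized pairs of) loop labels. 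The only care needed is to encode the two labels $\ell(x), \ell(x^{-1})$ of each loop into the single planted-tree label of the corresponding cycle-dart via the injective $\lab$ function, and to do the analogous thing on the reversed dart, so that the label-preservation condition in \eqref{eq:iso} translates faithfully in both directions.

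For \emph{prisms} and \emph{antiprisms} the idea is the same but the bookkeeping is heavier. A prism over an $n$-cycle has $2n$ vertices lying on two triangular-quadrilateral ``rings''; an orientation-preserving automorphism is determined by where it sends one dart, and the group is $\cyc_n$ (rotations) — the ``flip'' exchanging the two rings is orientation-reversing on the sphere, hence excluded from $\Auto$. So I would pick the outer $n$-gon, form a labeled cycle $M'$ on $n$ edges, and define the label of each cycle-dart by bundling, via $\lab$, all the local data of the prism that sits ``between'' consecutive outer vertices: the labels of the rung edge, the inner edge, and the relevant darts, arranged in a fixed canonical order around that portion of the prism. One then checks that a rotation of the prism induces a rotation of $M'$ preserving these bundled labels, and conversely that any label-preserving rotation of $M'$ lifts uniquely — uniqueness being free from semiregularity (the Theorem following \eqref{eq:iso}), so only existence needs the explicit lift. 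The antiprism is identical except that the ``portion between consecutive outer vertices'' contains two triangles rather than a square and a rung, which just changes which finite list of labels gets fed to $\lab$.

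The main obstacle is purely organizational rather than conceptual: making the label-bundling \emph{canonical} and \emph{reversible}. Canonicity is needed so that an automorphism of $M$ maps bundled labels to bundled labels; reversibility (which comes from injectivity of $\lab$, already assumed) is needed so that an automorphism of the labeled cycle $M'$ knows how to reconstruct the prism/antiprism-local structure it came from, and hence extends. I would handle this by fixing, once and for all, a reading order of the constituent darts and edges of each ``fundamental slab'' of the prism or antiprism relative to a chosen dart of the equatorial cycle, and verifying the commuting rules \eqref{eq:iso} slab by slab exactly as in the proofs of Lemmas~\ref{lem:loops}--\ref{lem:periodic}. Linearity of the construction is immediate: each of the $n$ slabs contributes $\calO(1)$ work plus one $\lab$-evaluation, and $\lab$ is evaluable in linear time by the promise in Section~\ref{sec:reductions} (to be detailed in Section~\ref{sec:labels}), so the whole reduction runs in time $\calO(\|M\|)$.
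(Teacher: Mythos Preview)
Your treatment of dipoles and bouquets is fine and matches the paper's in spirit. The genuine gap is in the prism/antiprism case, and it stems from a factual error: the ``flip'' exchanging the two $n$-gon rings of a prism is \emph{not} orientation-reversing. A $180^\circ$ rotation of the sphere about an axis through the midpoints of two opposite lateral edges (or through a vertex and the opposite edge-midpoint, for odd $n$) lies in $\SOr(3)$; it swaps the top and bottom $n$-gons and belongs to $\Auto(M)$. Thus for an unlabeled $n$-prism $\Auto(M)\cong\dih_n$, not $\cyc_n$. Your construction --- take the \emph{outer} $n$-gon as $M'$ and bundle the slab data into labels --- does not see these flips at all: such an automorphism sends the darts of the outer $n$-gon to darts of the \emph{inner} $n$-gon, so it is not even a permutation of $D'$. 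You would get an injection $\Auto(M')\hookrightarrow\Auto(M)$ but no surjection, and the claimed isomorphism fails. (The same issue arises for antiprisms.) Making the equatorial-cycle idea work would require a canonical identification of the two rings that is compatible with both rotations and flips, which is considerably more delicate than what you sketched.

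For comparison, the paper sidesteps this entirely by recycling the machinery already built in Section~\ref{sec:reductions}: dualize first, then apply the existing $\Auto$-preserving reductions. The dual of an $n$-prism is an $n$-bipyramid, which (for $n\neq4$) is not uniform; one application of $\periodic$ collapses it to a $3n$-dipole, whose dual is a labeled $3n$-cycle. The dual of an $n$-antiprism is likewise reducible; one application of $\aperiodic$ yields a $2n$-dipole and hence a labeled $2n$-cycle. Because Lemmas~\ref{lem:periodic} and~\ref{lem:adt_subgroup} already guarantee $\Auto$ is preserved (and duality preserves $\Auto$ trivially, as you correctly noted for dipoles), no new bookkeeping or verification of commuting rules is needed. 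This is both shorter and avoids the trap you fell into.
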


\begin{proof}
The idea of the proof is to take the dual $M^*$ of $M$, if $M$ is not a bouquet, and apply the reductions defined in Section~\ref{sec:reductions}, following the order defined by the priorities.

Clearly, the dual of a dipole is a cycle.
The dual of an $n$-prism, is an $n$-bipyramid.
It is easy to see that an $n$-bipyramid, for $n\neq 4$, is reduced to a $3n$-dipole by applying the periodic reduction.
Similarly, the dual of an $n$-antiprism, for $n \neq 3$, is again a reducible map, which is reduced by applying once an aperiodic reduction to $2n$-dipole.
Every prism and antiprism is therefore transformed to a labeled cycle.

Concerning bouquets, we transform every $n$-bouquet to an $n$-cycle based on the same set darts.
Formally, let $B_n = (D,R,L,\ell)$ be a bouquet. We set $D'=D$, $L'=L$ and $\ell'(x)=\lab(s,\ell(x))$.
By definition the rotation consists of a single cycle of the form $R=(x_0,x_0\inv,x_1,x_1\inv,\dots,x_{n-1},x_{n-1}\inv)$.
We set $R'=\prod_{i=0}^{d-1} (x_i\inv,x_{i+1})$.
\end{proof}

It remains to show how to test isomorphism of two labeled maps whose underlying graphs are cycles.
In order to make the exposition simpler, we transform the labeling of darts $\ell\colon D\to\calU$ of a labeled cycle $M$ to a labeling of vertices.
For every vertex $v$ of $M$, there are two darts $x,y$ incident with it, and we have $R_v = (x, y)$.
%The two-element set $\{\ell(x),\ell(y)\}$ can be considered as a new label of the vertex $v$.
The pair $(\ell(x),\ell(y))$ can be considered as a new label of the vertex $v$.
Thus, the problem reduces to testing isomorphism of two \emph{vertex-labeled} cycles.

\heading{Automorphisms and isomorphisms of labeled cycles.}
In this section we modify the algorithm which was given by Hopcroft and Wong~\cite{wong} to test isomorphism of cycles.
This algorithm is an essential tool, since we apply it several times as a black box in the rest of the text.
In particular, we use it in the algorithm for uniform toroidal maps (the next subsection) and in the algorithm for computing the centralizer of a fixed-point-free involution in a certain $2$-generated group; see Lemma~\ref{lem:centralizer}.
The latter application is necessary to compute the generators of the automorphism group of a map on the projective plane or on the Klein bottle.

Given cycles $X_1$ and $X_2$ with vertex-labelings $\ell_1$ and $\ell_2$, respectively, the following algorithm tests if there is an isomorphism $\psi\colon V(X_1)\to V(X_2)$ such that $\ell_1(v) = \ell_2(\psi(v))$, for every $v\in V(X_1)$.
For simplicity, we assume that, at the start, if $X_i$, for $i = 1, 2$, has $k$ different labels, for some $k\leq |V(X_i)|$, then the labels are the integers $1,\dots,k$ and the same coding is used in $X_1$ and $X_2$. Moreover, we fix an orientation of $X_1$ and $X_2$, so that for every vertex $v$ its successor $\suc(v)$ is well defined.

\begin{figure}[b]
\centering
\includegraphics{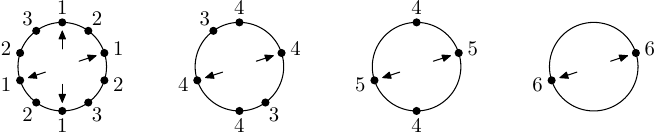}
\caption{Illustration of the reduction procedure for cycles.}
\label{fig:cyc_iso}
\end{figure}

\begin{itemize}
\item
\textbf{Step 1:}
We find an arbitrary vertex $v_1$ in $X_1$, with $\ell_1(v_1)\neq\ell_1(\suc(v_1))$.
If no such vertex exists in $X_1$, then $\ell_1$ is constant in which case it is easy to  check if $X_1 \cong X_2$.
Otherwise, we find $v_2 \in V(X_2)$ with
$\ell_1(v_1) = \ell_2(v_2)$ and $\ell_1(\suc(v_1)) = \ell_2(\suc(v_2))$.
If no such vertex $v_2$ exists, then $X_1$ and $X_2$ are not isomorphic.

\item
\textbf{Step 2:}
For $i = 1, 2$, we construct the set $S_i$ of all vertices $u$ of $X_i$ with $\ell_i(u) = \ell_i(v_i)$ and $\ell_i(\suc(u)) = \ell_i(\suc(v_i))$; see Figure~\ref{fig:cyc_iso}.
The sets $S_1$ and $S_2$ form independent sets in $X_1$ and $X_2$, respectively.
Every isomorphism maps $S_1$ bijectively to $S_2$.
If $|S_1| \neq |S_2|$, then $X_1$ and $X_2$ are not isomorphic.

\item
\textbf{Step 3:}
For every $v \in S_i$ ($i=1,2$), we join $v$ to $\suc(\suc(v))$ and remove $\suc(v)$.
We relabel every vertex in $S_i$ by $k$, where $k$ is the smallest unused integer; see Figure~\ref{fig:cyc_iso}.

\item
\textbf{Step 4:}
We find an arbitrary vertex $v_1 \in S_1$ with $\ell_1(v_1) \neq \ell_1(\suc(v_1))$.
If no such vertex exists, then we have $S_1 = V(X_1)$ and $\ell_1$ is constant. It is easy to check if $X_1 \cong X_2$.
Otherwise, we find $v_2 \in S_2$ with $\ell_2(v_2) = \ell_1(v)$ and $\ell_2(\suc(v_2)) = \ell_2(\suc(v))$.
If no such vertex exists, then $X_1$ and $X_2$ are not isomorphic, and we stop.

\item
\textbf{Step 5:}
For $i = 1, 2$, we remove from $S_i$ every $u$ with $\ell_i(\suc(u)) \neq \ell_i(\suc(v_i))$.
The sets $S_1$ and $S_2$ form independent sets in $X_1$ and $X_2$, respectively.
If $|S_1| \neq |S_2|$, then $X_1$ and $X_2$ are not isomorphic and we stop.
We go to Step 3.
\end{itemize}

By $X_i\inv$ we denote the labeled cycle $X_i$ ($i=1,2$) with the reverse orientation.

\begin{lemma}
Applying the above algorithm twice for the inputs $(X_1,X_2)$ and $(X_1,X_2\inv)$ with $X_2$ taken with the chosen and the reverse orientation, it is decided in linear time if two labeled cycles $X_1$ and $X_2$ are isomorphic as oriented maps.
\end{lemma}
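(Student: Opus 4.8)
I would split the argument into a reduction step, a correctness analysis of one run of Steps~1--5, and an amortized running-time analysis of that run; I expect the last of these to be the genuinely delicate part. First I would make precise the reduction from isomorphism of the cycle \emph{maps} to isomorphism of the vertex-labelled cycle \emph{graphs}. Recall the vertex label of $v$ with $R_v=(x,y)$ is the pair $(\ell(x),\ell(y))$, and the chosen orientation provides a successor map $\suc$ on $V(X_i)$. A bijection $V(X_1)\to V(X_2)$ is (the dart map of) an orientation-preserving isomorphism of the cycle maps precisely when it commutes with $\suc$ and preserves the vertex labels; reversing the orientation of $X_2$ replaces $\suc$ by its inverse and swaps both coordinates of every label, i.e.\ produces $X_2^{-1}$. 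Hence the run of the Steps~1--5 procedure on $(X_1,X_2)$ decides whether $\Isoo(X_1,X_2)\ne\emptyset$, the run on $(X_1,X_2^{-1})$ decides whether $\Isoo(X_1,X_2^{-1})\ne\emptyset$, and together they decide isomorphism; moreover the final state of a run describes the whole coset of isomorphisms it produced. So it suffices to analyse a single run.

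For correctness of one run I would exhibit a loop invariant that holds immediately after every execution of Step~3 (and is established by Steps~1--2): the current labelled cycles $X_1,X_2$ are isomorphic (label-respecting, commuting with $\suc$) if and only if the original inputs were, and in addition $S_1,S_2$ are independent sets, $|S_1|=|S_2|$, and every isomorphism $X_1\to X_2$ restricts to a bijection $S_1\to S_2$. The only non-routine point is that Step~3 preserves the isomorphism relation: each vertex it deletes is the $\suc$-successor of an $S_i$-vertex, and by construction of $S_i$ all the deleted vertices carry the single label $\ell_i(\suc v_i)$; so an isomorphism of the old cycles descends to one of the contracted cycles, and conversely an isomorphism $\psi'$ of the contracted cycles extends uniquely by re-inserting, after every fresh-label vertex, one vertex carrying that common label --- here one uses $\ell_1(v_1)=\ell_2(v_2)$ and $\ell_1(\suc v_1)=\ell_2(\suc v_2)$ from the choice of $v_2$ in Step~1 (resp.\ Step~4). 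Steps~4--5 merely replace $S_i$ by $\{u\in S_i : \ell_i(\suc u)=\ell_i(\suc v_i)\}$, which is again independent and is respected by every isomorphism, so the invariant is maintained; the terminal cases, detected in Step~1 or Step~4 when $\ell_i$ is constant, are trivial since then the cycles are isomorphic iff they have equal length. Since each Step~3 removes $|S_i|\ge 1$ vertices, the cycle strictly shrinks, so the run halts after fewer than $|V(X_1)|$ passes.

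For the running time I would fix the data structures: the cycles as circular doubly-linked lists (so $\suc$, deletion of $\suc(v)$, and splicing $v$ to $\suc\suc(v)$ cost $O(1)$), each $S_i$ as a linked list whose cells hold back-pointers into the cycle, and labels as integers managed by a single incrementing counter, so that comparisons and ``smallest unused integer'' are $O(1)$ in the unit-cost RAM model; the initial normalisation of the (at most $|V(X_i)|$) labels to $\{1,\dots,k\}$ with a common encoding for $X_1$ and $X_2$ is done by bucket/radix sort in $O(|V(X_1)|+|V(X_2)|)$. Then Steps~1--2 cost $O(|V(X_1)|+|V(X_2)|)$, and one pass (a Step~3, a Step~4, a Step~5) costs $O(|S_1|+|S_2|)=O(|S_1|)$, using $|S_1|=|S_2|$. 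The crucial observation is that the $|S_1|$ vertices deleted in the Step~3 of a pass are distinct from those deleted in every other pass, whence $\sum_{\text{passes}}|S_1|\le |V(X_1)|$; summing gives a total of $O(|V(X_1)|+|V(X_2)|)$ for one run, hence linear overall for the two runs.

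The step I expect to be the main obstacle is exactly this amortization. It dictates the precise shape of Steps~3--5 --- relabelling the set $S_i$ rather than deleting it, and \emph{refining} $S_i$ in Step~5 instead of recomputing it from scratch --- and it requires verifying that all the auxiliary bookkeeping (maintaining the $S_i$-lists and back-pointers, issuing fresh labels) stays within the ``one unit of work per deleted vertex'' budget, together with the $O(1)$-per-operation data-structure claims. By contrast, the correctness invariant, although it needs some care around the contraction/expansion bijection and the injectivity of the labelling coming from the fresh integers, is comparatively routine.
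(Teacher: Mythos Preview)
Your proposal is correct and follows essentially the same approach as the paper: correctness via the invariant that the contraction in Step~3 preserves the isomorphism relation (using that the deleted successors all carry the same label and that every isomorphism maps $S_1$ onto $S_2$), together with the amortized cost bound $\sum |S_i|\le |V(X_i)|$ from the fact that Step~3 deletes $|S_i|$ vertices per pass. The paper presents the correctness and the running-time bound as two separate lemmas and is terser about data structures, while you weave them together and are more explicit about the loop invariant and the unit-cost bookkeeping; but the substance is the same.
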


\begin{proof}
Let $X_1'$ and $X_2'$ be the graphs obtained from $X_1$ and $X_2$ after applying Step 3, respectively.
It suffices to show that $X_1\cong X_2$ if and only if $X_1'\cong X_2'$.

Let $T_i$ be the set of clockwise neighbors of $S_i$, for $i=1,2$.
Formally, $T_i = \{u \in V(X_i) : u = \suc(v), \text{ for } v \in S_i\}$.
The subgraph of $X_i$ induced by $S_i\cup T_i$ is a matching such that all the vertices in $S_i$ have the same label and all the vertices in $T_i$ have the same label.
Every orientation preserving isomorphism $\psi \colon X_1 \to X_2$ satisfies $\psi(S_1) = S_2$ and $\psi(T_1) = T_2$.
We have $V(X_i') = V(X_i)\setminus T_i$.
Therefore, the restriction of $\psi$ to $V(X_1')$ is an isomorphism from $X_1'$ to $X_2'$.

On the other hand, if $\psi'\colon X_1'\to X_2'$ is an isomorphism, then let $U_i$ be the set of clockwise neighbors of $S_i$ in $X_i'$.
We have $\psi'(S_1) = S_2$.
Note that we assume that $S_1$ and $S_2$ are updated before applying Step 4.
Since $|S_i| = |T_i|$, we can easily extend $\psi'$ to an isomorphism $\psi\colon X_1\to X_2$.

We need to execute the algorithm twice to check whether $X_1$ is isomorphic $X_2$, or to a $180$-degree rotation of $X_2$.
More precisely,  $\Iso(X_1,X_2)$ checks map for the existence of  map isomorphisms taking the inner face of $X_1$ onto the inner face $X_2$, while $\Iso(X_1,X_2\inv)$ checks the existence of a map isomorphisms taking the inner face of $X_1$ onto the outer face of $X_2$.
\end{proof}

\begin{lemma}
The complexity of the above algorithm is $\calO(n)$, where $n$ is the number of vertices of $X_1$ and $X_2$.
\end{lemma}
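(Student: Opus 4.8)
The proof is a standard amortized analysis: all the work of the repeated Steps~3--5 is charged to cycle vertices, each of which is deleted at most once. First I would fix the data structures. Store each $X_i$ as a doubly linked list along its cyclic order, so that the contraction in Step~3 (delete $\suc(v)$, splice $v$ to $\suc(\suc(v))$) costs $O(1)$ per vertex; store each $S_i$ as a linked list; and keep one global counter supplying the ``smallest unused integer'' needed to relabel in Step~3. Since at the outset the labels are assumed to be coded as integers $1,\dots,k$ with a common coding in $X_1$ and $X_2$ (and if they were not, a bucket sort would achieve this in $O(n)$), Steps~1 and~2 amount to a constant number of linear scans of $V(X_1)\cup V(X_2)$ together with label lookups, hence take $O(n)$ time.

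For the loop, let iteration $t$ be entered with the candidate sets of common size $a_t := |S_1| = |S_2|$; the equality $|S_1| = |S_2|$ is an invariant, established in Step~2 and re-checked in Step~5, and the algorithm halts as soon as it fails. Two observations drive the analysis. First, each $S_i$ is an \emph{independent} set of the current cycle: for the set produced in Step~2 this is because $v_i$ was chosen in Step~1 with $\ell_i(v_i) \neq \ell_i(\suc(v_i))$, so $u$ and $\suc(u)$ cannot both lie in $S_i$; and it persists into later iterations because after Step~3 every vertex of $S_i$ carries a brand-new label $k$, while the common successor-label $q$ selected in Step~4 satisfies $q \neq k$ (the split vertex $v_i$ has $\ell_i(v_i) = k \neq \ell_i(\suc(v_i)) = q$), so once more no vertex of $S_i$ is the successor of another. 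Second, since $\suc$ is a bijection and $S_i$ is independent, Step~3 deletes from each cycle \emph{exactly} $a_t$ vertices, none of them in $S_i$; writing $n_t$ for the common size of the two cycles at the start of iteration $t$, this gives $n_{t+1} = n_t - a_t$. Each of Steps~3, 4, 5 is one scan of the current $S_i$ — the relabeling, the search for a split vertex, and the refinement, respectively — so iteration $t$ costs $O(a_t)$ and emits $S_i$ of size $a_{t+1} \le a_t$. Telescoping, $\sum_t a_t = n_1 - n_{\mathrm{last}+1} \le n$, so the whole loop costs $\sum_t O(a_t) = O(n)$.

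Combining the $O(n)$ of Steps~1--2 with the $O(n)$ of the loop, and noting that the full procedure merely runs this algorithm twice (on $(X_1,X_2)$ and on $(X_1,X_2\inv)$), which doubles the constant, yields the claimed $O(n)$ bound. I do not expect a real obstacle here; the one point worth spelling out in full is the independence of $S_i$ at every iteration, because that is precisely what forces Step~3 to remove a full batch of $a_t$ \emph{fresh} vertices and hence makes the sizes $a_t$ telescope against $n$ rather than merely be bounded one at a time.
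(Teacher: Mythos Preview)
Your proposal is correct and follows essentially the same amortized argument as the paper: Steps~1--2 cost $\calO(n)$, each iteration of Steps~3--5 costs $\calO(|S_1|+|S_2|)$, and since Step~3 deletes that many vertices, the total telescopes to $\calO(n)$. You supply more detail than the paper does --- in particular, you explicitly verify the independence invariant for $S_i$ (which the paper simply asserts within the algorithm description) and spell out the telescoping sum --- but the underlying idea is identical.
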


\begin{proof}
Steps 1--2 take $\calO(n)$ time.
Each iteration of Steps 3--5 takes $\calO(|S_1| + |S_2|)$ time.
However, since we remove $|S_1|$ vertices from $X_1$ and $|S_2|$ vertices from $X_2$ in each iteration of Step 3, the overall complexity is $\calO(n)$.
\end{proof}

\begin{corollary}
For a labeled cycle $X$ on $n$ vertices, there is a $\calO(n)$-time algorithm that computes the generator of the cyclic group of rotations of $X$.
\end{corollary}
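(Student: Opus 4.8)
Recall that $\Auto(X)$ is exactly the group of label-preserving rotations of $X$, so if we write $V(X)=\{v_0,\dots,v_{n-1}\}$ in the cyclic order given by $\suc$, then $\Auto(X)$ is a subgroup of the cyclic group $\cyc_n$ of all rotations $v_i\mapsto v_{i+j}$ (indices mod $n$). Such a subgroup is cyclic, its order $m=|\Auto(X)|$ divides $n$, and its unique generator is the rotation $v_i\mapsto v_{i+n/m}$, which induces a well-defined permutation of the darts computable in $\calO(n)$ once $m$ is known. Thus it suffices to compute $m$ in linear time (and optionally confirm the output by checking in $\calO(n)$ that the claimed rotation preserves all labels). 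If $\ell$ is constant then $m=n$; assume from now on that $\ell$ is non-constant.

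The plan is to run the reduction of Steps 1--5 above with $X$ playing the role of \emph{both} input cycles (a single run suffices here, since we want only orientation-preserving automorphisms, i.e.\ rotations, and not the mirror isomorphisms obtained from the second run on $X\inv$). The two copies start out equal, and at every stage the sets $S_1,S_2$ and the reference vertices chosen in Steps~1 and~4 depend only on label data that is identical in the two copies; hence the two copies undergo the same contractions throughout, and the procedure outputs a single reduced labeled cycle $X'$ on a dart set $D'\subseteq D$. By the termination analysis of Step~4, $X'$ is monochromatic (all its vertices share one label), so $\Auto(X')=\cyc_{m'}$ with $m'=|V(X')|$. Now apply the lemma above on cycle isomorphism --- which shows that each Step~3 contraction restricts every isomorphism $X_1\to X_2$ to an isomorphism of the contracted cycles, and conversely that such restrictions extend back --- to each Step~3 executed during the run, with $X_1=X_2$; this makes the restriction map $\Auto(X)\to\Auto(X')$ surjective. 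It is injective as well, since $\Auto$ acts without fixed points on darts and $D'\neq\emptyset$. Hence $m=m'=|V(X')|$, and the generator of $\Auto(X)$ is the rotation by $n/|V(X')|$ vertices.

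The running time is linear: producing $X'$ is one run of Steps~1--5, which costs $\calO(n)$ by the complexity lemma above, and counting $|V(X')|$ and writing down the generator (plus the optional verification) is again $\calO(n)$. The delicate point is the bookkeeping that collapses ``the algorithm run on $(X,X)$'' to a single reduced cycle and licenses re-reading the isomorphism-preservation lemma as a statement about $\Auto$; once that is set up, injectivity of the restriction is immediate from semiregularity. Alternatively, one can bypass the reduction altogether: $m=n/p$, where $p$ is the least period of the cyclic word $\ell(v_0)\cdots\ell(v_{n-1})$, and $p$ is computed in $\calO(n)$ from the Knuth--Morris--Pratt failure function; the generator is then the rotation by $p$ vertices.
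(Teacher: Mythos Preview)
Your proposal is correct and matches the paper's intended approach: the corollary is stated there without proof, as an immediate consequence of the two preceding lemmas on correctness and linear complexity of the cycle-isomorphism procedure, and you spell out precisely that deduction---run the procedure on $(X,X)$, observe that the terminal cycle $X'$ is monochromatic so that $|\Auto(X')|=|V(X')|$, and use the restriction/extension argument from the correctness lemma together with semiregularity to identify $\Auto(X)$ with $\Auto(X')$. One minor remark: the cycle algorithm in the paper is phrased on vertices rather than darts, so your injectivity appeal to dart-semiregularity is slightly indirect; for rotations of a cycle the action is already free on vertices, which gives injectivity immediately.

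The KMP alternative you sketch at the end is a genuinely different, self-contained route that avoids the reduction machinery altogether; it is equally valid (and arguably cleaner) but is not what the paper has in mind.
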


The results of this subsection are summarized by the following.

\begin{proposition}
\label{prop:uniform_spherical}
If $M = (D,R,L)$ is an irreducible spherical map, then the generators of $\Aut(M)$ can be computed in time $\calO(|D|)$.
\end{proposition}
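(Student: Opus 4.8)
The plan is to walk through the classification of irreducible spherical maps recalled above and, in every case, reduce the computation of $\Aut(M)$ to the cycle algorithm of this subsection. Which of the finitely many possible types the input is (a cycle, a dipole, a prism, an antiprism, a bouquet, or one of the sporadic maps) is determined in $\calO(|D|)$ time from cheap invariants such as $v(M)$, $e(M)$, $f(M)$ and the multiset of vertex degrees, so there is no loss in treating the cases separately. The sporadic maps — the $2$-skeletons of the five Platonic solids, the thirteen Archimedean solids, and the pseudo-rhombicuboctahedron — have $|D|$ bounded by an absolute constant, so for these one simply computes $\Aut(M)$ and a generating set by brute force in $\calO(1)=\calO(|D|)$ time.

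Next, for the infinite families — prisms, antiprisms, dipoles, bouquets — I would invoke the preceding lemma to build, in linear time, a labeled cycle $M'$ with $\Auto(M)\cong\Auto(M')$. In each of the constructions used there (dualization followed by the elementary reductions of Section~\ref{sec:reductions}, or the direct bouquet-to-cycle transformation) the dart set of $M'$ is a subset of $D$ and the isomorphism $\Auto(M)\cong\Auto(M')$ is realized simply by restriction of darts. Since all of these operations commute with passing to the mirror image, the orientation-reversing isomorphisms $M\to M$ correspond bijectively, again by restriction, to the orientation-reversing isomorphisms $M'\to M'$; together with the previous point this gives $\Aut(M)\cong\Aut(M')$ via restriction, the map being injective because $\Auto$ is semiregular on the dart set and $D'\neq\emptyset$. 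Hence it suffices to compute generators of $\Aut(M')$ and then lift them: each generator of $\Auto(M')$, and an orientation-reversing generator of $\Aut(M')$ if one exists, extends to a unique permutation of $D$, reconstructed in $\calO(|D|)$ time by propagating along the orbits of $R$ and $L$ — equivalently, by the corollary on testing in linear time whether an isomorphism sends a prescribed dart to a prescribed dart.

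It remains to compute generators of $\Aut(M')$ for a labeled cycle $M'$, whether $M'$ is the input itself or the one just produced. I would first convert the dart-labeling to a vertex-labeling as described, and then run the cycle algorithm: a generator of the cyclic rotation group $\Auto(M')$ is produced by the corollary on computing the generator of the rotation group of a labeled cycle, and whether $M'$ is reflexible — and, if so, one orientation-reversing isomorphism witnessing it — is decided by running the labeled-cycle isomorphism algorithm on the pair $(M',(M')\inv)$, exactly as in the lemma computing $\Iso(X_1,X_2)$. Both steps run in $\calO(n)$ time, where $n$ is the number of vertices of the cycle, and $n=\calO(|D|)$. Assembling the sporadic case, the reduction-to-cycle case, and the cycle case yields the claimed $\calO(|D|)$ bound.

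The main obstacle I anticipate is not any single computation but the bookkeeping needed to make sure that the chain of operations carrying a prism/antiprism/dipole/bouquet down to a cycle really tracks the full automorphism group $\Aut(M)$, and not merely the orientation-preserving part $\Auto(M)$, and that the permutations lifted back from the cycle are genuine automorphisms of $M$ rather than of some relabeled map. This is precisely where one must lean on the semiregularity of $\Auto$ (so that restriction to $D'$ is injective and extension is unique) and on the symmetry of every operation under $M\mapsto M\inv$ (so that the orientation-reversing part is handled by the same reductions).
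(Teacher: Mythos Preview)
Your proposal is correct and follows essentially the same route as the paper: the proposition is stated there as a summary of Section~\ref{sec:uniform_sphere}, whose content is exactly the case split you describe --- brute force for the sporadic solids, the reduction lemma carrying prisms/antiprisms/dipoles/bouquets to labeled cycles, and the Hopcroft--Wong-style cycle algorithm for the resulting cycles. Your explicit bookkeeping for passing from $\Auto$ to $\Aut$ (via commutation of the reductions with $M\mapsto M\inv$ and semiregular lifting) spells out what the paper leaves implicit and handles in its global summary by running the pipeline on both $M$ and $M\inv$.
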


\def\ZZ{{\mathbb Z}}

\subsection{Torus}
\label{sec:uniform_torus}

By definition, the toroidal irreducible maps are uniform face-normal maps.
The universal covers of uniform toroidal maps are uniform tilings (infinite maps with finite vertex and face degrees) of the Euclidean plane.
There are 12 of such tilings; see~\cite[page 63]{grunbaum}.
The corresponding local types are $(3,3,3,3,3,3)$, $(4,4,4,4)$, $(6,6,6)$, $2 \times (3,3,3,3,6)$, $(3,3,3,4,4)$, $(3,3,4,3,4)$, $(3,4,6,4)$, $(3,6,3,6)$, $(3,12,12)$, $(4,6,12)$, and $(4,8,8)$.
One type occurs in two forms, one is the mirror image of the other.
Each of these tilings $T$ gives rise to an infinite family of toroidal uniform maps as follows.
It is well-known that $\Auto(T)$ is isomorphic either to the triangle group $\Delta(4,4,2)$ or to $\Delta(6,3,2)$.
Each of these contains an infinite subgroup $H$ of translations generated by two shifts.
Every finite uniform toroidal map of the prescribed local type can be constructed as the quotient $T/K$, where $K$ is a subgroup of $H$ of finite index.

\heading{From uniform face-normal toroidal maps to homogeneous maps.}
We show that each of the uniform maps can be reduced to one of the two homogeneous types $\{4,4\}$ and $\{6,3\}$, while preserving the automorphism group.

\begin{lemma}
For every labeled uniform toroidal map $M$, there is a labeled homogeneous map $M'$ of type $\{4,4\}$ or $\{6,3\}$ such that $\Auto(M) \cong \Auto(M')$.
Moreover $M'$ can be constructed in linear time.
\end{lemma}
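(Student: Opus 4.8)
The plan is to reduce $M$ to a homogeneous map by a canonical local surgery, with one recipe per local type, arguing correctness as follows. Recall from the discussion above that $M$ is the quotient $T/K$ of one of the twelve uniform tilings $T$ of the Euclidean plane by a finite-index translation subgroup $K$; since $T$ is the simply connected universal cover of $M$, a standard covering argument gives $\Auto(M) \cong N_{\Auto(T)}(K)/K$. Call a surgery $M \mapsto M'$ \emph{admissible} if it is \emph{canonical} --- $M'$ is produced from the oriented combinatorial data of $M$ alone, the orientation being allowed to resolve any left/right ambiguity (e.g. to single out, inside a face, the edge that follows a distinguished one in the rotation) --- and \emph{reversible} --- $M$ can be reconstructed from $M'$ by a canonical procedure, any data that would otherwise be lost being recorded in the dart-labels via $\lab$, exactly as in Section~\ref{sec:reductions}. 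For an admissible surgery, canonicity gives $\Auto(M)\hookrightarrow\Auto(M')$ by restriction and reversibility gives $\Auto(M')\hookrightarrow\Auto(M)$; the two maps are mutually inverse, so $\Auto(M)\cong\Auto(M')$, with both directions computable in linear time. Linearity of the surgery itself is immediate, since each recipe below rewrites the map in one pass, possibly followed by a bounded number of applications of the $\loops$/$\dipoles$ reductions of Section~\ref{sec:reductions} to clean up.

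Three of the twelve local types are already homogeneous. For $(4,4,4,4)$ and $(3,3,3,3,3,3)$ there is nothing to do, and we set $M':=M$. For $(6,6,6)$, $M$ is homogeneous of type $\{3,6\}$, and we set $M':=M^*$, which is homogeneous of type $\{6,3\}$, carries the same labels, and has the same monodromy group up to the relabelling $R\mapsto R^{-1}L$ --- hence the same centraliser, so $\Auto(M^*)=\Auto(M)$.

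For each of the remaining eight types (counting the snub hexagonal tiling once, since its two chiral forms are treated identically) the tiling $T$ arises from $\{4,4\}$, $\{6,3\}$ or $\{3,6\}$ by one of the classical local operations on maps: truncation for $(3,12,12)$ and $(4,8,8)$, rectification for $(3,6,3,6)$, omnitruncation for $(4,6,12)$, cantellation for $(3,4,6,4)$, the snub operation for $(3,3,3,3,6)$, and a ``triangle-layer insertion'' for $(3,3,3,4,4)$ and $(3,3,4,3,4)$. Each such operation is local, hence commutes with the quotient by $K$, and we would realise its combinatorial inverse as an admissible surgery on $M$: collapse or expand the appropriate canonical substructure --- typically the set of faces of a prescribed degree, which every automorphism preserves, together where necessary with a single orientation-distinguished edge in each such face --- record the removed material in the labels, and, if the outcome is of type $\{3,6\}$, dualise it. One then checks type by type that the result is homogeneous of type $\{4,4\}$ or $\{6,3\}$; for instance, contracting all square faces of a $(4,8,8)$-map yields $\{4,4\}$, contracting all triangular faces of a $(3,12,12)$-map yields $\{3,6\}$ (then dualised), and contracting all hexagonal faces of a $(3,4,6,4)$-map, followed by $\dipoles$ to remove the $2$-gons so created, yields $\{6,3\}$.

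The main obstacle will be pushing this case analysis through for the least rigid tilings. The two chiral forms of the snub hexagonal tiling $(3,3,3,3,6)$ are the hardest: the snub is the most intricate of the classical operations and its inverse the least transparent, because the triangular faces of such a map occupy two combinatorially distinguishable roles relative to the base tiling (those sharing an edge with a hexagon versus those that do not), and one must use this split --- and the orientation --- to reconstruct the base tiling canonically. The elongated triangular tiling $(3,3,3,4,4)$ is awkward for a different reason: its symmetry group is strictly smaller than that of $\{4,4\}$, so the reduction cannot output $\{4,4\}/K$ for the same $K$; instead the layer-removal surgery must be orientation-sensitive and produces a quadrangular grid that is a genuinely different quotient, and one must check directly that the automorphism group is preserved. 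Beyond these, the remaining work is routine but lengthy: spelling out each of the eight surgeries, defining the matching label updates and checking reversibility, and confirming that every recipe together with its inverse runs in linear time.
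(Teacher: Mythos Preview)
Your framework of admissible surgeries is sound and would ultimately succeed, but you are working much harder than necessary and taking a genuinely different route from the paper. The paper's proof is a one-liner: take the dual $M^*$ and feed it back into the reduction machinery already built in Section~\ref{sec:reductions}. The point is that a uniform toroidal map has all vertex degrees equal but several face degrees; dualising swaps these roles, so $M^*$ has light vertices of non-homogeneous degree type and the reductions $\largetype$, $\aperiodic$, $\normal$ apply directly. Concretely, the paper records that $\normal(\largetype(M^*))$ is already homogeneous for local types $(4,8,8)$, $(3,4,6,4)$, $(3,6,3,6)$, $(3,12,12)$, $(4,6,12)$; that $\aperiodic(M^*)$ suffices for $(3,3,3,3,6)$ and $(3,3,3,4,4)$; and that $\normal(\largetype(\aperiodic(M^*)))$ handles $(3,3,4,3,4)$. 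Correctness and linear time then come for free from Lemmas~\ref{lem:loops}--\ref{lem:periodic}, with no new surgery, no new label discipline, and no new argument needed.

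This comparison also dissolves the two difficulties you flag. For the snub hexagonal type $(3,3,3,3,6)$, the dual has degree-$3$ vertices (from triangles) and degree-$6$ vertices (from hexagons); the triangles edge-adjacent to a hexagon have aperiodic degree type $(3,3,6)$, so a single $\aperiodic$ step canonically picks and contracts the edge toward the hexagon --- the ``two roles of triangles'' distinction you worry about is exactly what the degree type sees. For the elongated triangular type $(3,3,3,4,4)$, your concern that the target $\{4,4\}$ has too much symmetry is handled not by changing the quotient lattice but by the labels: the output is a \emph{labeled} homogeneous map, and the labels produced by $\aperiodic$ cut $\Auto(M')$ down to $\Auto(M)$ automatically. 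Your covering identity $\Auto(M)\cong N_{\Auto(T)}(K)/K$ is correct but never needed.
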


\begin{proof}
%\begin{packed_itemize}
%\item
%$3^4,6 \rightarrow 3^6$ using dual and $\aperiodic$.
%\item
%$3^3,4^2 \rightarrow 3^6$ using dual and $\aperiodic$.
%\item
%$4,8^2 \rightarrow 4^4$ using dual, $\largetype$, $\normal$.
%\item
%$3^2,4,3,4 \rightarrow 48^2$ using dual, $\aperiodic$, dual.
%\item
%$3,4,6,4 \rightarrow 3^6$ using dual, $\largetype$.
%\item
%$3,6,3,6 \rightarrow 3^6$ using dual, $\largetype$, $\normal$.
%\item
%$3,12,12 \rightarrow 3^6$ using dual, $\largetype$, $\normal$.
%\item
%$4,6,12 \rightarrow 3^6$ using dual, $\largetype$, $\normal$.
%\end{packed_itemize}
The idea of the proof is to take the dual $M^*$ of $M$ and apply the reductions defined in Section~\ref{sec:reductions}, following the order defined by the priorities.
It is straightforward to check that the following is true.
\begin{packed_itemize}
\item
The map $\normal(\largetype(M^*))$ is a homogeneous map if the local type of $M$ is one of the following:
$(4,8,8)$,
$(3,4,6,4)$,
$(3,6,3,6)$,
$(3,12,12)$,
$(4,6,12)$.
\item
The map $\aperiodic(M^*)$ is a homogeneous map if the local type of $M$ is one of the following:
$(3,3,3,3,6)$,
$(3,3,3,4,4)$.
\item
The map $\normal(\largetype(\aperiodic(M^*)))$ is a homogeneous map if the local type of $M$ is
$(3,3,4,3,4)$.
\end{packed_itemize}
\end{proof}

\paragraph{Homogeneous maps on torus.}
First, we describe how to compute the generators of the automorphism group of unlabeled homogeneous toroidal maps.
There are three possible homogeneous types of such maps: $\{6,3\}$, $\{4,4\}$, and $\{3,6\}$.
They admit a simple description in terms of three integer parameters $r,s,t$ where $0\le t < r$.
The 4-regular quadrangulation $Q(r,s,t)$ is obtained from the $(r+1)\times (s+1)$ grid with underlying graph $P_{r+1}\Box P_{s+1}$ (the Cartesian product of paths on $r+1$ and $s+1$ vertices) by identifying the ``leftmost" path of length $s$ with the ``rightmost" one (to obtain a cylinder) and identifying the bottom $r$-cycle of this cylinder with the top one after rotating the top clockwise for $t$ edges.
In other words, the quadrangulation $Q(r,s,t)$ is the quotient of the integer grid $\mathbb Z \Box \mathbb Z$ determined by the equivalence relation generated by all pairs $(x,y) \sim (x+r,y)$ and $(x,y) \sim (x+t,y+s)$.

This classification can be derived by considering appropriate fundamental polygon of the universal cover (which is isomorphic to the tessellation of the plane with unit squares).
This structure was known to geometers (Coxeter and Moser~\cite{coxeter2013generators}).
In graph theory, this was observed by Altschuler \cite{Altschuler}; several later works do the same (e.g.~\cite{Thomassen}).
Our notation comes from Fisk \cite{Fisk77}, who only considered 6-regular triangulations. The 6-regular triangulation $T(r,s,t)$ is obtained from $Q(r,s,t)$ by adding all diagonal edges joining $(x,y)$ with $(x+1,y+1)$.  And the 3-regular hexangulations $H(r,s,t)$ of the torus are just duals of triangulations $T(r,s,t)$.

\begin{figure}
\centering
\includegraphics{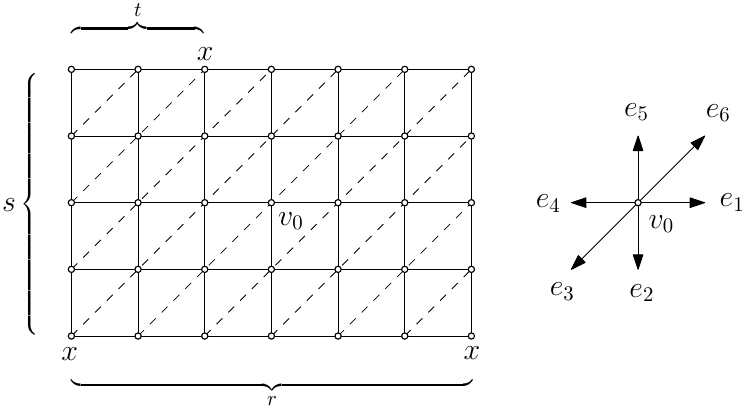}
\caption{The toroidal triangulation $T(6,4,2)$.
Its parameter list is the cyclic sequence of parameters (6,4,2), (12,2,6), (12,2,4), (6,4,4), (12,2,6), (12,2,10).
The stabilizers of the automorphism group action are trivial and its full automorphism group is isomorphic to $\ZZ_6\times \ZZ_4$.}
\label{fig:Trst}
\end{figure}

%These maps are vertex-transitive.
The parameters $r,s,t$ depend on the choice of one of the edges incident with a chosen reference vertex $v_0$.
Let us describe the 6-regular case $T(r,s,t)$ first. Let the clockwise order of the edges around $v_0$ be $e_1,e_2,\dots,e_6$. We start with $e_1$ and take the straight-ahead walk (when we arrive to a vertex $u$ using an edge $e$, we continue the walk with the opposite edge in the local rotation around $u$).
When we come back for the first time at the vertex we have already traversed, it can be shown that this vertex must be $v_0$ and that we arrive through the edge $e_4$, which is opposite to the initial edge $e_1$.
This way the straight-ahead walk closes up into a straight-ahead cycle $C=v_0v_1\dots v_{r-1}v_0$.
We let $r$ be the length of this cycle. Now, let us start a straight-ahead walk from $v_0$ with the initial edge $e_2$. Let $s$ be the number of steps on this walk until we reach a vertex, say $v_t$ ($0\le t<r$), on the cycle $C$, for the first time.
This determines the three parameters $r,s,t$ and it can be shown that the map is isomorphic to the map $T(r,s,t)$ described above.
The 4-regular case is the same except that we do not have the directions of the edges $e_3$ and $e_6$. See Figure~\ref{fig:Trst} for an example. 
In particular, this proves the well-known fact that $T(r,s,t)$ is vertex-transitive, for the abelian group $\langle a,b\rangle$ such that $ra = 0$ and $ta = sb$.

\paragraph{Labeled homogeneous maps on the torus.}
We give an algorithm that computes the generators of the automorphism groups of a labeled homogeneous toroidal map $M$ of type $\{4,4\}$ or $\{3,6\}$.
For technical reasons, we assume that the map $M$ is vertex-labeled instead of dart-labeled.
This transformations can be done easily by, for a given vertex, encoding the lables of the outgoing darts into the vertex.
The following lemma describes some important properties of $\Auto(M)$.

\begin{lemma}[\cite{such_torus}]
Let $M$ be a toroidal map of type $\{4,4\}$ or $\{6,3\}$.
The orientation-preserving automorphism group of a labeled map $M$ is a semidirect product $T\rtimes H$, where $T$ is a direct product of two cyclic groups, and $|H| \leq 6$.
Moreover, the action of $T$ is regular on the vertices of $M$.
\end{lemma}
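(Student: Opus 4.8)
The plan is to analyse $\Auto(M)$ through the universal cover. As recalled above, the universal cover of $M$ is one of the finitely many Euclidean tilings $\Theta$ of the prescribed homogeneous type, and $M\cong\Theta/K$, where $K$ is a subgroup of finite index in the translation subgroup $\Lambda$ of $\Gamma:=\Auto(\Theta)$. Here $\Lambda\cong\ZZ^2$ is generated by two independent shifts, $\Lambda$ is normal in $\Gamma$, and $\Gamma$ is one of the Euclidean triangle (von Dyck) groups $\Delta(4,4,2)$ or $\Delta(6,3,2)$; consequently the point group $P:=\Gamma/\Lambda$ is cyclic of order $4$ or $6$, so $|P|\le 6$. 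By the standard lifting correspondence for maps, an orientation-preserving automorphism of $M$ is precisely an element of $\Gamma$ that normalises the deck transformation group $K$, taken modulo $K$; hence $\Auto(M)\cong N_\Gamma(K)/K$.

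Next I would extract the semidirect-product structure. Since $\Lambda$ is abelian and $K\le\Lambda$, every translation normalises $K$, so $\Lambda\le N_\Gamma(K)$; put $T:=\Lambda/K$. As a quotient of $\ZZ^2$, the group $T$ is finite abelian and generated by two elements, hence by the structure theorem a direct product of at most two cyclic groups. These groups are symmorphic, i.e.\ $\Lambda$ admits a complement: $\Gamma=\Lambda\rtimes P_0$, where $P_0\cong P$ is realised as the stabiliser in $\Gamma$ of a rotation centre $c$ of maximal order (a vertex or a face-centre of $\Theta$, according to the type). Writing an arbitrary $g\in N_\Gamma(K)$ as $g=\lambda\pi$ with $\lambda\in\Lambda$ and $\pi\in P_0$, one checks that $\pi=\lambda\inv g$ again normalises $K$ (because $\lambda$ does and $\lambda K\lambda\inv=K$); hence $N_\Gamma(K)=\Lambda\rtimes\bigl(N_\Gamma(K)\cap P_0\bigr)$. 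Factoring out $K\le\Lambda$ and using $K\cap P_0=1$ gives $\Auto(M)\cong T\rtimes H$ with $H:=N_\Gamma(K)\cap P_0\le P_0$ cyclic and $|H|\le|P|\le 6$. Finally, $\Lambda$ acts freely and transitively on the vertex set of $\Theta$ for these tilings, so $T=\Lambda/K$ acts regularly on $V(M)=V(\Theta)/K$, which is the last assertion. For a labeled map the same analysis is applied to the label-preserving subgroup of $\Auto(M)$: it sits inside the group just described, so it is again a product of at most two cyclic groups extended by a cyclic group of order at most $6$.

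I expect the real work to be bookkeeping rather than a single deep step. The one part that must be pinned down carefully is the identification of $\Gamma$ with the correct triangle group together with its translation lattice $\Lambda$ and the cell stabilised by a maximal rotation — this is exactly the classical description of toroidal homogeneous maps as the quotients $Q(r,s,t)$, $T(r,s,t)$, $H(r,s,t)$ recalled in this subsection, so it can be quoted; one then has to verify that the symmorphic splitting $\Gamma=\Lambda\rtimes P_0$ survives intersection with $N_\Gamma(K)$ (done above) and, in the labeled case, with the label-preserving subgroup, where the splitting of the resulting cyclic extension has to be read off from the geometry (the complement being a cell-stabiliser, and the regular action of $T$ on the vertices forcing each such stabiliser to meet $T$ trivially and to act faithfully on the at most six rotational directions at the fixed cell). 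A complete proof is given in \cite{such_torus}.
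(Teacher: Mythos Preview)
The paper does not supply its own proof; the lemma is quoted from \cite{such_torus}. Your sketch is the standard and correct argument for the underlying \emph{unlabeled} homogeneous map: realise $M$ as $\Theta/K$ with $K\le\Lambda\cong\ZZ^2$, use $\Auto(M)\cong N_\Gamma(K)/K$, note $\Lambda\le N_\Gamma(K)$ since $\Lambda$ is abelian, and exploit the symmorphic splitting $\Gamma=\Lambda\rtimes P_0$ (with $P_0$ a vertex stabiliser, cyclic of order $4$ or $6$) to obtain $N_\Gamma(K)=\Lambda\rtimes(N_\Gamma(K)\cap P_0)$ and hence $\Auto(M)=T\rtimes H$ with $T=\Lambda/K$ regular on $V(M)$.

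One caveat concerns the word ``labeled'' in the statement. Read literally it cannot hold: for a generic labeling the label-preserving group need not be vertex-transitive, so no subgroup of it can act regularly on $V(M)$. The intended reading---confirmed by how the lemma is used immediately afterwards, where the algorithm checks which elements of $H$ are label-preserving and then searches for the label-preserving part of $T$---is that $T$ and $H$ refer to the unlabeled map. Your final paragraph attempts to push the semidirect decomposition down to the label-preserving subgroup, but that step is not automatic: a subgroup $G'\le T\rtimes H$ need not factor as $(G'\cap T)\rtimes(G'\cap H)$, and your appeal to a vertex stabiliser as complement does not force $G'\cap T$ to be transitive on the $G'$-orbit of that vertex. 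This is a wrinkle in the lemma's phrasing rather than a gap in your argument; the unlabeled case, which is what is actually required here, is handled correctly.
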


Since the order of $H$ is bounded by a constant, it takes linear time to check whether every element of $H$ is a label-preserving automorphism.
The main difficulty is to find $T$.
The subgroup $T$ is generated by $\alpha$ and $\beta$, where $\alpha$ is the horizontal, and $\beta$ is the vertical shift by the unit distance.
Now the meaning of the parameters $r,s,t$ is the following: $|\alpha| = r$, $\alpha^t = \beta^s$, and $s$ is the least power of $\beta$ such that $\beta^s \in \langle\alpha\rangle$.
The following lemma shows that $T$ can always be written as a direct product of two cyclic groups.

\begin{lemma}
\label{lem:decomposition_abelian}
There exists $\delta$ and $\gamma$ such that $T = \langle\delta\rangle \times \langle\gamma\rangle$.
Moreover, $\delta$ and $\gamma$ can be computed in time $\calO(rs)$.
\end{lemma}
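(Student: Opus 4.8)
The plan is to identify $T$ with a finite abelian group given by an explicit integer relation matrix, and to read off the decomposition $T=\langle\delta\rangle\times\langle\gamma\rangle$ from the Smith normal form of that matrix; existence of such a decomposition is of course immediate from the fundamental theorem of finite abelian groups (a $2$-generated abelian group is a product of at most two cyclic groups), so the real content is the effective version. By the structure recalled above, $T$ is generated by the horizontal unit shift $\alpha$ and the vertical unit shift $\beta$, and these satisfy $\alpha^{r}=\id$ and $\beta^{s}=\alpha^{t}$. By the lemma of~\cite{such_torus} quoted above, $T$ acts regularly on the $rs$ vertices of $M$, so $|T|=rs$. Hence the abelian group $A$ presented by $\langle\alpha,\beta\mid r\alpha=0,\ -t\alpha+s\beta=0\rangle$ surjects onto $T$, and since the relation matrix $R$ with rows $(r,0)$ and $(-t,s)$ satisfies $|\det R|=rs=|T|$, this surjection is an isomorphism. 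Thus $T\cong\ZZ^{2}/L$, where $L\leq\ZZ^{2}$ is the sublattice spanned by the rows of $R$, with the standard basis of $\ZZ^{2}$ corresponding to $(\alpha,\beta)$.

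First I would compute the Smith normal form $URV=\mathrm{diag}(d_{1},d_{2})$ with $U,V\in\mathrm{GL}_{2}(\ZZ)$; here $d_{1}=\gcd(r,s,t)$ and $d_{2}=rs/d_{1}$, so $d_{1}\mid d_{2}$, and the computation (an extended Euclidean computation on integers bounded by $rs$) also produces $V^{-1}$. Let $(p,q)$ and $(u,w)$ be the two rows of $V^{-1}$; since $V^{-1}$ is unimodular these form a $\ZZ$-basis of $\ZZ^{2}$, in which $L$ is spanned by $d_{1}(p,q)$ and $d_{2}(u,w)$. Setting $\delta:=\alpha^{p}\beta^{q}$ and $\gamma:=\alpha^{u}\beta^{w}$ we obtain $T=\langle\delta\rangle\times\langle\gamma\rangle$ with $|\delta|=d_{1}$ and $|\gamma|=d_{2}$; if $d_{1}=1$ the first factor is trivial and $T$ is cyclic, generated by $\gamma$.

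For the running time, the integer arithmetic producing $U,V,V^{-1},d_{1},d_{2}$ costs $\calO(\log rs)$ and is negligible. By semiregularity of $\Auto(M)$ an element of $T$ is determined by the image of a single reference dart, so it suffices to compute the images of one dart $x_{0}$ under $\delta$ and $\gamma$: after reducing $p,u$ modulo $r=|\alpha|$ and $q,w$ modulo $|T|=rs$ (legitimate since every element order divides $|T|$), each of $\alpha^{p},\beta^{q},\alpha^{u},\beta^{w}$ is obtained by iterating the corresponding unit shift at most $rs$ times, which yields the images of $x_{0}$ in $\calO(rs)$ time. Finally, from the image of $x_{0}$ one reconstructs $\delta$ and $\gamma$ as full permutations of the $\Theta(rs)$ darts in $\calO(rs)$ time by propagating along the map, exactly as one extends a generator of the automorphism group using semiregularity elsewhere in the paper.

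The argument is routine once the setup is in place; the two genuinely delicate points are verifying that the two-relation presentation really presents $T$ and not a proper quotient of it — which is exactly where the equality $|T|=rs$ coming from the regular action is used — and keeping the exponents in $\delta=\alpha^{p}\beta^{q}$ and $\gamma=\alpha^{u}\beta^{w}$ small, by reducing them modulo the relevant orders, so that evaluating the resulting automorphisms stays within the $\calO(rs)$ budget rather than costing a factor proportional to the exponents.
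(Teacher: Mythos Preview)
Your argument is correct and follows essentially the same route as the paper: both identify $T$ with $\ZZ^2$ modulo the relation lattice generated by $(r,0)$ and $(-t,s)$ and read off the cyclic decomposition from the Smith normal form, obtaining invariant factors $d_1=\gcd(r,s,t)$ and $d_2=rs/d_1$. The paper is terser and simply writes down explicit candidates $\delta=\alpha^{t/m}\beta^{-s/m}$, $\gamma=\beta^{t/m}$ (with $m=d_1$), whereas you extract the new generators from the columns/rows of the unimodular change-of-basis matrix $V^{-1}$; your version is the more robust of the two, and your justification that the two-relation presentation really has order $rs$ (via the regular action on vertices) is a point the paper leaves implicit.
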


\begin{proof}
Using the Smith Normal Form, we have that $T \cong \mathbb{Z}_m \times \mathbb{Z}_n$, where $m = \gcd(r, t, s)$ and $n = rs/\gcd(r,t,s)$.
Since $t$ divides $r$, we have $m = \gcd(t,s)$.
The respective generators of $T$ can be chosen to be $\delta = \alpha^{\frac{t}{m}}\beta^{\frac{-s}{m}}$ and $\gamma = \beta^{\frac{t}{m}}$.
\end{proof}

Lemma~\ref{lem:decomposition_abelian} can be viewed as a transformation of the shifted grid $\calG$ to the orthogonal grid $\calG^\perp$.
Note that the underlying graph may change, but both $\calG$ and $\calG^\perp$ are Cayley graphs based on the group $T$, therefore, the vertex-labeling naturally transfers.
Thus, we may assume that $t = 0$ and $T = \langle\alpha\rangle\times\langle\beta\rangle \cong \cyc_r\times \cyc_s$.
We need to compute generators of the label-preserving subgroup of $T$.

\heading{Subgroups of $\cyc_r\times \cyc_s$.}
From now on, we assume that we are given a \emph{cyclic orthogonal grid} $\calG$ of size $rs$, which is graph with vertices identified with $(i,j) \in G$, where $G  = \cyc_r\times\cyc_s$.
For every $(i,j)$, there is an edge between $(i,j)$ and $(i+1 \mod r, j)$, and between $(i,j)$ and $(i,j+1 \mod s)$.
Moreover, we are given an integer-labeling $\ell$ of the vertices of $\calG$.
Clearly, $\calG$ determines the \emph{$\ell$-preserving subgroup} $H$ of $G$, namely $$H = \{(x,y) : \forall (i,j) \in G, \ell(i,j) = \ell(i+x,j+y)\}.$$
The goal is to find the generators of $H$ in time $\calO(rs)$.

We give a description of any subgroup of the direct product of $G$ that is suitable for our algorithm.
First, we define four important mappings.
The two \emph{projections} $\pi_1\colon G \to \cyc_r$ and $\pi_2\colon G\to \cyc_s$ are defined by $\pi_1(x,y) = x$ and $\pi_2(x,y) = y$, respectively.
The two \emph{inclusions} $\iota_1\colon \cyc_r \to G$ and $\iota_2\colon \cyc_s\to G$ are defined by $\iota_1(x) = (x,0)$ and $\iota_2(y) = (0,y)$, respectively.

\begin{lemma}
\label{lem:subgroups_zr_zs}
Let $G = \cyc_r\times\cyc_s$ for $r,s\geq 1$, and let $H$ be a subgroup of $G$. Then
there are $a,c\in \cyc_r$ and $b\in\cyc_s$ such that
$$H = \{(ia + jc, jb) : i,j \in \cyc\} = \langle (a,0), (c,b)\rangle,$$
where $\langle a\rangle = \iota_1\inv(H)$, $\langle b\rangle = \pi_2(H)$, and $c < a$ is the minimum integer such that $(c,b)\in H$.
\end{lemma}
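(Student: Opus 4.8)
The plan is to describe the structure of an arbitrary subgroup $H \le G = \cyc_r \times \cyc_s$ by analyzing how it interacts with the two coordinate axes. First I would set $b$ to be the generator of the projection $\pi_2(H) \le \cyc_s$; that is, $\langle b \rangle = \pi_2(H)$, so $b$ is the least positive integer in $\pi_2(H)$ (with $b = 0$ if $\pi_2(H)$ is trivial). Since $b \in \pi_2(H)$, there exists at least one element of $H$ of the form $(c, b)$; among all such elements I pick the one with $c \in \cyc_r$ smallest, giving a generator $(c,b)$. Next I would look at the intersection of $H$ with the first axis, i.e. the subgroup $\iota_1\inv(H) = \{x \in \cyc_r : (x,0) \in H\} \le \cyc_r$, and let $a$ be its generator, so $\langle a \rangle = \iota_1\inv(H)$. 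This produces the claimed generators $(a,0)$ and $(c,b)$.

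The key step is to verify that these two elements generate all of $H$. One inclusion is immediate: $(a,0) \in H$ and $(c,b) \in H$, so $\langle (a,0),(c,b)\rangle \subseteq H$. For the reverse inclusion, take an arbitrary $(x,y) \in H$. Then $y \in \pi_2(H) = \langle b \rangle$, so $y = jb$ for some integer $j$. Subtracting $j \cdot (c,b)$ from $(x,y)$ yields $(x - jc, 0) \in H$, hence $x - jc \in \iota_1\inv(H) = \langle a \rangle$, so $x - jc = ia$ for some integer $i$. Therefore $(x,y) = i(a,0) + j(c,b)$, which shows $H \subseteq \langle (a,0),(c,b)\rangle$ and also gives the explicit parametrization $H = \{(ia + jc, jb) : i,j \in \cyc\}$.

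The main obstacle — really the only subtlety — is the well-definedness of the choice of $c$: a priori, the element of $H$ with second coordinate exactly $b$ is not unique, since $(c,b)$ and $(c + a, b)$ both lie in $H$ whenever $(a,0) \in H$. The statement resolves this by demanding $c < a$ minimal, so I would note that once $a$ is fixed, the set of valid $c$ is a coset of $\langle a \rangle$ in $\cyc_r$ and thus has a unique representative in $\{0,1,\dots,a-1\}$; one must also handle the degenerate cases where $\pi_2(H)$ or $\iota_1\inv(H)$ is trivial (so $b = 0$ or $a = 0$), where the formulas still make sense with the convention $\langle 0 \rangle = \{0\}$. Everything else is the routine divisibility bookkeeping sketched above, and the cyclic orthogonal grid $\calG$ realizes $G$ as a Cayley graph so that $H$ is exactly the label-preserving subgroup we want to compute.
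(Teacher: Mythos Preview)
Your argument is correct and follows essentially the same route as the paper: define $a$, $b$, $c$ via $\iota_1^{-1}(H)$, $\pi_2(H)$, and a minimal preimage of $b$, then verify both inclusions by the subtract-off-$j(c,b)$ trick. The paper's only additional remark is the explicit division-with-remainder computation showing the minimal $c$ satisfies $c<a$, which is exactly your observation that the admissible $c$'s form a coset of $\langle a\rangle$; your final sentence about the grid $\calG$ is context for the lemma's use rather than part of its proof.
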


\begin{proof}
Note that $\iota_1\inv(H)$ is a subgroup of $\cyc_r$, i.e., there is $a \in \cyc_r$ such that $\langle a\rangle = \iota_1\inv(H)$.
Similarly, $\pi_2(H)$ is a subgroup of $\cyc_s$, i.e., there is $b \in \cyc_s$ such that $\langle b\rangle = \pi_2(H)$.
Finally, let $c \in \cyc_r$ be minimum such that $(c,b) \in H$.
We prove that $H = \{(ia + jc, jb) : i,j \in \cyc\}$.

Clearly, for every $i,j \in \cyc$, we have
$$(ia + jc, jb) = (ia, 0) + (jc, jb) = i(a,0) + j(c,b).$$
By the definition $\iota_1$ and by the definition of $c$, we have $(a,0), (c,b) \in H$, and hence $i(a,0)+j(c,b) \in H$.

On the other hand, let $(x,y) \in H$.
Then $\pi_2(x,y) = y \in \cyc_s$, hence there is $j$ such that $y = jb$.
By the definition of $c$, we have $(c,b) \in H$, and therefore, $(x - jc,0) = (x,y) - j(c,b) \in H$.
By the definition of $\iota_1$, we have $\iota_1\inv(x-jc,0) = x - jc \in \cyc_r$.
There exits $i$ such that $x - jc = ia$.
We obtain $(x,y) = (ia + jc, jb)$.

Finally, we show that $c < a$.
Dividing $c$ by $a$, we get $c = ka + r$, for some $k\geq 0$ and $0\leq r < a$.
However,
$$(r,b) = (c - ka, b) = (c, b) - k(a,0) \in H.$$
Thus, by minimality of $c$, we obtain $c < a$.
\end{proof}

This description suggests an algorithm to find the generators of the given subgroup $H$ of $\cyc_r\times\cyc_s$.
In our setting, the subgroup $H$ is given on the input by a labeling function $\ell$, defined on the vertices of the $r\times s$ orthogonal grid.
The subgroup $H$ is the $\ell$-preserving subgroup of $\cyc_r\times\cyc_s$.

To compute the generators of $H$, it suffices, by Lemma~\ref{lem:subgroups_zr_zs}, to determine $a,c\in \cyc_r$ and $b\in\cyc_s$ such that $\langle a\rangle = \iota_1\inv(H)$, $\langle b\rangle = \pi_2(H)$, and $c$ is the smallest integer such that $(c,b) \in H$.
Then $H = \langle (a,0),(c,b)\rangle$.

\begin{lemma}
There is an $\calO(rs)$-time algorithm which computes the integers $a,b,c$ such that $\iota_1\inv(H) = \langle a\rangle$, $\pi_2(H) = \langle b\rangle$ and $c < a$ is the smallest integer such that $(c,b) \in H$.
\end{lemma}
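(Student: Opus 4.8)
The plan is to extract the integers $a$, $b$, $c$ of Lemma~\ref{lem:subgroups_zr_zs} from the labeled grid $\calG$ in linear time, using only linear-time string-periodicity primitives (the Knuth--Morris--Pratt prefix function and Booth's least-rotation algorithm) together with elementary modular arithmetic ($\gcd$ and Chinese remaindering). By Lemma~\ref{lem:decomposition_abelian} I may assume the grid is already orthogonal, so $G=\cyc_r\times\cyc_s$ and $\ell$ is a function on $G$; for $j\in\cyc_s$ let $R_j=(\ell(0,j),\ell(1,j),\dots,\ell(r-1,j))$ denote the $j$-th row, regarded as a cyclic word of length $r$, and let $\sigma$ be the cyclic left shift on such words.

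First I would compute $a$. Since $(x,0)\in H$ holds exactly when every $R_j$ is fixed by the rotation $\sigma^x$, the subgroup $\iota_1\inv(H)$ is the intersection over $j$ of the rotation-period subgroups of the rows; running Booth's algorithm on each row yields its least period $\pi_j$ (with $\pi_j\mid r$) in total time $\calO(rs)$, and then $a:=\lcm_j\pi_j$, so that $\langle a\rangle=\iota_1\inv(H)$. Next I would compute $b$. Membership $y\in\pi_2(H)$ forces $\widehat R_j=\widehat R_{j+y}$ for all $j$, where $\widehat R_j$ is the lexicographically least rotation of $R_j$ (again produced by Booth's algorithm), so applying the prefix function to the cyclic word $(\widehat R_0,\dots,\widehat R_{s-1})$ gives the least $b_0$ with $\widehat R_j=\widehat R_{j+b_0}$ for all $j$; then $\pi_2(H)\leq\langle b_0\rangle$, so $b$ is a multiple of $b_0$ dividing $s$. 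For each $j$ the rows $R_j$ and $R_{j+b_0}$ are rotations of one another, so a single string match gives an offset $\delta_j$ with $R_j=\sigma^{\delta_j}(R_{j+b_0})$, well defined modulo $\pi_j$, all of this in time $\calO(rs)$. Grouping the indices of $\cyc_s$ into the $b_0$ residue classes of $b_0$ — in one class all rows share a common period $q$ — the condition $(x,mb_0)\in H$ unwinds to the following: in every class the cyclic word $\delta_k,\delta_{k+b_0},\delta_{k+2b_0},\dots$ of length $s/b_0$ has period $m$, and the partial sums of those offsets at step $m$ (each $\calO(1)$ to read off once prefix sums are stored) agree, modulo the class periods, on one common value $x$. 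Hence $b/b_0$ is the least common period of these $b_0$ offset-words (found by the prefix function in $\calO(s)$ total), further enlarged to meet the solvability constraint for $x$ obtained by Chinese remaindering the $\leq b_0$ classes; then $b$ is that value times $b_0$, $x_*$ is the resulting offset, and $c:=x_*\bmod a$. Finally, by Lemma~\ref{lem:subgroups_zr_zs}, $H=\langle(a,0),(c,b)\rangle$, which is the required output.

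The main obstacle will be keeping the computation of $b$ genuinely linear: the naive idea of testing each divisor of $s/b_0$ separately costs an extra $d(s)$ factor, so the search must be folded into a single string-periodicity computation — this is exactly the role of the per-class offset-words above, whose periods are computed once in $\calO(s)$ and then merged by one pass of Chinese remaindering over the at most $b_0\leq s$ classes. A closely related point demanding care is that distinct residue classes of $b_0$ carry distinct moduli $\pi_j$, so the offset arithmetic and the consistency test for $x$ must be performed class by class and combined with $\gcd$/CRT rather than over a single modulus; correctness then reduces to verifying that the subgroup $\pi_2(H)$ is precisely the set of step sizes passing both tests. The remaining ingredients — reducing the sheared grid to an orthogonal one via Lemma~\ref{lem:decomposition_abelian}, with the labeling transported along the induced isomorphism of Cayley graphs, and the degenerate cases where the relevant period set is trivial ($b_0=s$, or $\pi_2(H)=\{0\}$, whence $c=0$) — are routine and contribute only $\calO(rs)$.
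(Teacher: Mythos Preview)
Your plan is correct and takes a genuinely different route from the paper's. Both arguments begin identically: compute $a$ as the $\lcm$ of the least periods of the rows, and then compute a preliminary vertical step $b_0$ as the least period of the cyclic sequence of rotation-equivalence classes of rows (the paper writes $\hat s$ for this quantity). The divergence is in how $b$ and $c$ are extracted once the rows have been grouped into the $b_0$ residue classes.

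The paper does precisely what you dismiss as the ``naive idea'': within each residue class it enumerates every divisor of the class length as a candidate vertical period, checks each candidate in time proportional to the class size, and then intersects over classes. The trivial bound ``number of divisors $\le$ the number itself'' together with the harmless symmetry assumption $s\le r$ absorbs the extra divisor factor and keeps the total at $\calO(rs)$. Your approach instead encodes the vertical structure of each class as an offset word in $\cyc_{q_k}$, reads off its least period via the prefix function, and then resolves the cross-class compatibility by a Chinese-remainder sweep to find the smallest multiplier $\ell$ for which the system $x\equiv \ell x_k\pmod{q_k}$ becomes solvable. This avoids divisor enumeration entirely (and with it the need for $s\le r$), at the price of a more delicate argument. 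The one place your sketch is thin is exactly this ``enlargement'' step: a single CRT pass in the ordinary sense only tests one fixed $\ell$, whereas you need the \emph{generator} of the subgroup of admissible $\ell$'s. This can indeed be done in one incremental pass --- after processing classes $0,\dots,j$ you maintain both the current divisibility constraint $D_j$ on $\ell$ and the combined residue $Y_j$ modulo $\lcm(q_0,\dots,q_j)$ realised at $\ell=D_j$, updating by a constant number of $\gcd$'s per class --- but that mechanism deserves to be spelled out rather than left implicit in the phrase ``one pass of Chinese remaindering''.
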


\begin{proof}
First, we compute $a$ in time $\calO(rs)$.
Let $X_1,\dots,X_s$ be the horizontal cycles of length $r$.
For each $X_j$, we compute in time $\calO(r)$ the integer $r_j$ such that $\Aut(X_j) \cong \cyc_{r_j}$.
We put $a := r/r'$, where $r' = \gcd(r_1,\dots,r_s)$.
We need to argue that $\iota_1\inv(H) = \langle a\rangle$.

Let $(x,0) \in H$.
There exist integers $i_j$, for $j = 1,\dots,s$, such that $x = i_ja_j = i_jr/r_j$.
We put $i := \gcd(i_1,\dots,i_s)$.
Then $x = ir/r' = ia \in \langle a\rangle$.

Conversely, let $x = ia$.
By the definition of $a$, we have $(a,0)\in H$.
Then for every $x',y'$, we have
$$\ell(x' + x, y') = \ell(x' + ia, y') = \ell(x' + (i-1)a, y') = \cdots = \ell(x' + a, y') = \ell(x', y'),$$
i.e., $(x,0) \in H$.

Before, dealing with $b$ and $c$, we first reduce the problem to a special case, where we have a grid $\calG'$ satisfying that the labeling $\ell$ of any horizontal cycle $X_j'$ of $\calG'$ is a rotation of the labeling of the cycle $X_0'$.
We do this as follows.
For a horizontal cycle $X_j$ of $\calG$ let $\Sigma_j$ denote the string
$$\ell((0,j)),\dots,\ell((r-1,j)).$$
We say that $X_j$ and $X_{j'}$ are equivalent if $\Sigma_j$ is a cyclic rotation of $\Sigma_{j'}$.
We assign and integer label to every $X_j$ such that $X_j$ and $X_{j'}$ have the same label if and only if they equivalent.
This defines an auxiliary labeled cycle, for which we compute the integer $s'$ such that its automorphism group is isomorphic to $\cyc_{s'}$.
We define new grids $\calG_i'$ consisting of cycles $X_{0+i}, X_{\hat s+i}, X_{2\hat s'+i}, \dots, X_{(s'-1)\hat s+i}$, for $\hat s = s/s'$ and $i = 0,\dots,s' - 1$.
%Denote these cycles by $X_0',\dots,X_{s'-1}'$.
%Clearly, $H$ restricted to $\calG'$ is isomorphic to $H$.
Each $\calG_i'$ is a grid of size $r\hat s$.

Now, for every fixed $i$, we compute $b_i$ and $c_i$ such that $H_i = \langle (a,0), (c_i,b_i) \rangle$ is the automorphism group of $\calG_i'$ in time $\calO(r\hat s)$.
Since $\langle b_i\rangle$ is a subgroup of $\cyc_{\hat s}$, we may assume that $b$ divides $\hat s$.
Given $b_i$, there exits a unique $c_i < a$ such that $\ell(c_i,b_i) = \ell(0,0)$.
Thus, it is possible to identify the set of candidate pairs $(c_i,b_i)$ in time $\calO(r\hat s)$.
Finally, the group $H = \bigcap_{i=0}^{s'} H_i$ is the automorphism group of $\calG$.
To compute $b$ and $c$ such that $H = \langle (c,b) \rangle$, we put $b = \lcm(b_0,\dots,b_{s'})$ and $c = \lcm(c_0,\dots,c_{s'})$.

Given $(c_i,b_i)$, we claim that it can be checked in time $\calO(\hat s)$ whether the group $\langle (c_i,b_i)\rangle$ is $\ell$-preserving in $\calG_i'$.
Consider the vectors $$v_j = (\ell(jc_i, jb_i), \ell(jc_i, jb_i+1), \dots, \ell(jc_i, (j+1)b_i - 1)),$$
for $j = 0, \dots, \hat s/b_i$.
Now, the group $\langle (c_i,b_i)\rangle$ is $\ell$-preserving in $\calG_i'$ if and only if all these vectors are equal.
To verify this this, we need $\calO(b\hat s/b) = \calO(\hat s)$ comparisons.

The number of all candidate pairs is at most the number of divisors of $\calO(\hat s)$.
Thus the total number of comparisons is
$$\sum_{d|\hat s}\calO(\hat s) = \hat s \sum_{d|\hat s}\calO(1) = \calO({\hat s}^2) = \calO(r\hat s).$$
The last equality holds if we assume that $s\leq r$, which is always possible without loss of generality.
Moreover, we do this for every $i = 0,\dots,s'-1$, thus, the total complexity is $\calO(s'r\hat s) = \calO(s'rs/s') = \calO(rs)$.
%Let $S = \{(x,y) : \ell(x,y) = \ell(0,0)\}$.
%It follows that $\langle (c,b)\rangle$ is $\ell$-preserving if and only if $S + (c,b) = S$.
%Note that $|S \cap V(X_j)| = r/a$.
%The the order of $|(c,b)|$ is at most $rs/ab$.
%Hence, to check whether $\langle (c,b)\rangle$ is $\ell$-preserving, we need to do at most $rs/ab$ comparisons.
%Thus, to identify all label preserving cyclic subgroups of the form $\langle (c,b)\rangle$, where $c < a$ and $b | s$, we need to do at most
%$$\sum_{b|s}\frac{rs}{ab} = \frac{r}{a}\sum_{b|s}\frac{s}{b} = \frac{r}{a}\sum_{d|s}d = \calO(rs)$$
%comparisons.
%The divisors of $s$ can be found in time $\calO(\sqrt{s})$.
%Finally, we pick the minimum $b$ such that $\langle (c,d)\rangle$ is $\ell$-preserving.
\end{proof}

The results of this subsection are summarized by the following.

\begin{theorem}
\label{thm:uniform_toroidal}
If $M = (D,R,L,\ell)$ is a uniform face-normal labeled toroidal map, then the generators of $\Aut(M)$ can be computed in time $\calO(|D|)$.
\end{theorem}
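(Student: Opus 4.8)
The plan is to assemble the machinery developed so far in this subsection. First I would apply the lemma reducing a labeled uniform toroidal map $M$ to a labeled homogeneous map $M'$ of type $\{4,4\}$ or $\{6,3\}$ in linear time with $\Auto(M)\cong\Auto(M')$; taking the dual once more if necessary (which leaves the dart set unchanged and preserves $\Auto$ exactly, since $\langle R^{-1}L,L\rangle=\langle R,L\rangle$ and hence the two monodromy groups have the same centraliser), I may assume $M'$ has type $\{4,4\}$ or $\{3,6\}$. Because duality does not change the number of darts and the elementary reductions of Section~\ref{sec:reductions} only delete darts, $\|M'\|\le\|M\|=|D|$; and since $M'$ is homogeneous of bounded type, $\|M'\|=\Theta(v(M'))=\Theta(rs)$ in the $Q(r,s,t)$/$T(r,s,t)$/$H(r,s,t)$ parametrization, so $rs=\calO(|D|)$ and every $\calO(rs)$ bound below is $\calO(|D|)$. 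Finally, via the pairwise form of the reduction lemmas of Section~\ref{sec:reductions} the set $\Isoo(M,M\inv)$ is recovered (by restriction to the surviving darts, then lifting) from $\Isoo$ of the associated labeled homogeneous maps; hence it suffices to compute generators of $\Aut(M')=\Auto(M')\cup\Isoo(M',(M')\inv)$.

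Next I would compute $\Auto(M')$. Straight-ahead walks from a reference vertex recover the parameters $r,s,t$ of the underlying homogeneous structure in $\calO(|D|)$ time. By Lemma~\ref{lem:decomposition_abelian} I pass from the shifted grid to the orthogonal grid, so I may assume $t=0$ and that the translation subgroup of $M'$ is $\cyc_r\times\cyc_s$ acting regularly on vertices; this costs $\calO(rs)$, and the vertex-labeling transfers since both grids are Cayley graphs on the same abelian group. I then run the $\calO(rs)$-time algorithm computing the $\ell$-preserving subgroup of $\cyc_r\times\cyc_s$ to obtain generators of the translation part $T_0\le\Auto(M')$; that $T_0$ is exactly this subgroup follows because a fixed-point-free element of $T_0\rtimes H_0$ lies in $T_0$. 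For the finite part, the cited structure lemma gives $\Auto(M')=T_0\rtimes H_0$ with $|H_0|\le 6$, and $H_0$ injects into the point group $H_{\mathrm{full}}$ of the underlying unlabeled homogeneous map, which is cyclic of order at most $6$ and determined by $r,s,t$. For each of the $\calO(1)$ elements $h\in H_{\mathrm{full}}$ I form the twisted labeling $\ell\circ h$ and test, by the same grid technique, whether some translation carries it back to $\ell$; if so I obtain a coset representative, and each such test runs in $\calO(|D|)$ time. The generators of $T_0$ together with the (at most six) feasible coset representatives generate $\Auto(M')$, and are computed in $\calO(|D|)$ time altogether.

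To obtain the full group $\Aut(M')$ I would decide whether $\Isoo(M',(M')\inv)$ is nonempty and, if so, produce one element $\varphi$ of it. Both maps are labeled homogeneous toroidal maps of the same type, with parameters read off by straight-ahead walks, so after passing to orthogonal grids the question becomes whether one vertex-labeling is obtained from the other by a translation composed with a point symmetry; the cycle-isomorphism algorithm of Section~\ref{sec:uniform_sphere} and the grid computations above can be adapted to answer this in $\calO(|D|)$ time and to output $\varphi$. Then $\Aut(M')=\langle\Auto(M'),\varphi\rangle$, or $\Aut(M')=\Auto(M')$ if no such $\varphi$ exists, so in either case $\Aut(M')$ has $\calO(1)$ generators. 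Finally, because every elementary reduction and duality satisfy $\Isoo(M_1,M_2)_{\restriction{D_1'}}=\Isoo(M_1',M_2')$ and $\Auto$ acts semiregularly --- so a map automorphism is determined by the image of a single dart --- each generator of $\Aut(M')$ extends uniquely, in $\calO(|D|)$ time, to a generator of $\Aut(M)$; doing this for the $\calO(1)$ generators completes the computation within the required $\calO(|D|)$ bound.

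I expect the main difficulty to lie in the linear-time bookkeeping rather than in a single new idea: the genuinely hard computational step --- finding the $\ell$-preserving subgroup of $\cyc_r\times\cyc_s$ in time $\calO(rs)$ --- has already been handled by the preceding lemma, so what remains is to verify carefully that (i) the reduction to a homogeneous map and the reconstruction of generators through the chain of reductions and dualities are linear and compatible with mirror images, (ii) the parameters $r,s,t$ are correctly recovered by straight-ahead walks (the walk from the reference vertex closing up at that vertex through the opposite edge), and (iii) the finite point-group part and the orientation-reversing extension contribute only $\calO(1)$ generators, each verifiable in linear time by semiregularity. Carrying the identity $\|M'\|=\Theta(rs)$ through all estimates is what keeps every $\calO(rs)$ bound equal to $\calO(|D|)$.
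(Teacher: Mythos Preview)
Your proposal is correct and follows essentially the same approach as the paper: Theorem~\ref{thm:uniform_toroidal} is stated there only as a summary of the subsection, so its proof consists precisely of chaining the reduction lemma to a homogeneous $\{4,4\}$ or $\{6,3\}$ map, the structure lemma $\Auto=T\rtimes H$ with $|H|\le 6$, Lemma~\ref{lem:decomposition_abelian} to pass to an orthogonal grid, and the $\calO(rs)$ computation of the $\ell$-preserving subgroup of $\cyc_r\times\cyc_s$. Your handling of the orientation-reversing part via $\Isoo(M',(M')\inv)$ and of the lift of generators back through the reductions is more explicit than the paper (which defers these points to Section~\ref{sec:labels}), but the route is the same.
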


\section{Non-orientable surfaces}
\label{sec:nonorientable}

For a map $M$ on a non-orientable surface $S$, we reduce the problem of computing the generators of $\Aut(M)$ to the problem of computing the generators of $\Auto(\widetilde{M})$, for some orientable map $\widetilde{M}$.
In particular, the map $\widetilde{M}$ is the antipodal double cover of $M$.

Given a map $M = (F,\lambda,\rho,\tau)$ on a non-orientable surface of genus $\gamma$, we define the antipodal double cover $\widetilde{M} = (D,R,L)$ by setting $D := F$, $R := \rho\tau$, and $L := \tau\lambda$.
Since $M$ is non-orientable, we have $\langle R,L\rangle = \langle\lambda,\rho,\tau\rangle$, so $\langle R,L\rangle$ is transitive and $\widetilde{M}$ is well-defined.
For more details on this construction see~\cite{nedela_double_cover}.
We note that $\widetilde{\chi} = 2\chi$, where $\widetilde{\chi}$ and $\chi$ is the Euler characteristic of the underlying surface of $\widetilde{M}$ and $M$, respectively.

\begin{lemma}
\label{lem:nonor_subgroup}
We have $\Aut(M) \leq \Auto(\widetilde{M})$.
\end{lemma}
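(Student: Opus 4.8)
The plan is to reduce everything to the algebraic characterizations of the two automorphism groups. Recall that $\Aut(M)$ consists precisely of the bijections $\psi\colon F\to F$ that commute with each of $\lambda$, $\rho$, and $\tau$, while $\Auto(\widetilde{M})$ is, by definition, the centralizer of the monodromy group $\langle R,L\rangle$ in $\Sym(D)$. By the construction of the antipodal double cover we have $D=F$, $R=\rho\tau$, and $L=\tau\lambda$, so the two groups act on the very same ground set and the asserted inclusion can be read literally. Since $R$ and $L$ are words in $\lambda,\rho,\tau$, any permutation commuting with the three generators automatically commutes with $R$, with $L$, and hence with every element of $\langle R,L\rangle$.

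Concretely, I would take $\psi\in\Aut(M)$, so that $\psi\lambda=\lambda\psi$, $\psi\rho=\rho\psi$, and $\psi\tau=\tau\psi$, and compute
$$\psi R=\psi\rho\tau=\rho\psi\tau=\rho\tau\psi=R\psi,\qquad \psi L=\psi\tau\lambda=\tau\psi\lambda=\tau\lambda\psi=L\psi.$$
Thus $\psi$ lies in the centralizer of $\langle R,L\rangle$ in $\Sym(D)$, which is exactly $\Auto(\widetilde{M})$, and therefore $\Aut(M)\le\Auto(\widetilde{M})$.

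There is essentially no technical obstacle here; the only points that deserve a word of care are bookkeeping ones. First, one should note that the identification $D=F$ furnished by the double-cover construction is what makes it meaningful to compare $\Aut(M)$ and $\Auto(\widetilde{M})$ as subgroups of one and the same symmetric group. Second, a priori an element of $\Aut(M)$ might have landed only in a group allowing orientation reversal (conjugating $R$ to $R^{-1}$); the displayed identity $\psi R=R\psi$ rules this out and confirms that the image lies in the orientation-preserving part. I would also remark in passing that, because $M$ is non-orientable, $\langle R,L\rangle=\langle\lambda,\rho,\tau\rangle$, so every element of $\Auto(\widetilde{M})$ centralizes $\lambda$, $\rho$, and $\tau$ individually and the reverse inclusion holds as well; however, only the stated inclusion is needed in what follows.
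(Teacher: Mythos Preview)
Your proof is correct and is essentially identical to the paper's: both take $\psi\in\Aut(M)$ and verify $\psi R=R\psi$ and $\psi L=L\psi$ directly from $R=\rho\tau$, $L=\tau\lambda$ and the fact that $\psi$ commutes with each of $\lambda,\rho,\tau$. Your closing observation that non-orientability gives $\langle R,L\rangle=\langle\lambda,\rho,\tau\rangle$, and hence the reverse inclusion, is also correct, though the paper does not record this and instead passes through Lemma~\ref{lem:nonor_tool}.
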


\begin{proof}
Let $\varphi \in \Aut(M)$.
Then we have
$R^\varphi = (\rho\tau)^\varphi = \rho^\varphi\tau^\varphi = \rho\tau = R$ and $L^\varphi = (\tau\lambda)^\varphi = \tau^\varphi\lambda^\varphi = \tau\lambda = L$.
\end{proof}

\begin{lemma}
\label{lem:nonor_tool}
We have $\Aut(M) = \{\varphi \in \Auto(\widetilde{M}) :  \varphi\tau = \tau\varphi\}$.
\end{lemma}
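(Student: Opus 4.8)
The plan is to prove the two inclusions separately; both are short permutation computations, and I would not expect either to be the hard part of the paper.

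\textbf{The inclusion $\Aut(M) \subseteq \{\varphi \in \Auto(\widetilde{M}) : \varphi\tau = \tau\varphi\}$.} Let $\varphi \in \Aut(M)$. By Lemma~\ref{lem:nonor_subgroup} we already have $\varphi \in \Auto(\widetilde{M})$. By the definition of an isomorphism of the non-oriented map $M = (F,\lambda,\rho,\tau)$, the bijection $\varphi$ commutes with each of $\lambda,\rho,\tau$; in particular $\varphi\tau = \tau\varphi$. Hence $\varphi$ lies in the right-hand side, and this inclusion is immediate.

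\textbf{The inclusion $\{\varphi \in \Auto(\widetilde{M}) : \varphi\tau = \tau\varphi\} \subseteq \Aut(M)$.} Here is the content of the lemma. Take $\varphi \in \Auto(\widetilde{M})$ with $\varphi\tau = \tau\varphi$. Recall $\widetilde{M} = (D,R,L)$ with $D = F$, $R = \rho\tau$, and $L = \tau\lambda$, so membership in $\Auto(\widetilde{M})$ means $\varphi R = R\varphi$ and $\varphi L = L\varphi$, i.e. $\varphi\rho\tau = \rho\tau\varphi$ and $\varphi\tau\lambda = \tau\lambda\varphi$. In the first identity, rewrite the right side using $\tau\varphi = \varphi\tau$ to get $\varphi\rho\tau = \rho\varphi\tau$, and cancel the bijection $\tau$ on the right to obtain $\varphi\rho = \rho\varphi$. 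In the second identity, rewrite the left side using $\varphi\tau = \tau\varphi$ to get $\tau\varphi\lambda = \tau\lambda\varphi$, and cancel $\tau$ on the left to obtain $\varphi\lambda = \lambda\varphi$. Together with the hypothesis $\varphi\tau = \tau\varphi$, this shows that $\varphi$ commutes with all three of $\lambda,\rho,\tau$, i.e. $\varphi \in \Aut(M)$. Combining the two inclusions gives the claimed equality.

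\textbf{Main obstacle.} There is essentially none: the only things to be careful about are that $\lambda,\rho,\tau,\varphi$ are permutations, so left- and right-cancellation are legitimate, and that $\widetilde{M}$ is a well-defined connected oriented map, which is exactly where non-orientability of $M$ is used (it forces $\langle R,L\rangle = \langle\lambda,\rho,\tau\rangle$, hence transitivity on $F$), as already observed in the paragraph preceding the statement.
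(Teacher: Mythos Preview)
Your proof is correct and follows essentially the same approach as the paper: both directions are handled by the observation that if $\varphi$ commutes with $R=\rho\tau$, $L=\tau\lambda$, and $\tau$, then it automatically commutes with $\rho$ and $\lambda$ separately. The paper phrases the computation via conjugates, deriving $\varphi\rho\varphi^{-1} = \rho\tau(\varphi\tau\varphi^{-1})$ and $\varphi\lambda\varphi^{-1} = (\varphi\tau\varphi^{-1})\tau\lambda$ and then substituting $\varphi\tau\varphi^{-1}=\tau$, whereas you cancel $\tau$ directly; these are the same argument.
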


\begin{proof}
Let $\varphi \in \Auto(\widetilde{M})$.
We have $\varphi R\varphi\inv = R$ and $\varphi L\varphi\inv = L$.
By plugging in $R = \rho\tau$ and $L = \tau\lambda$, we obtain
$$\varphi(\rho\tau)\varphi\inv = \rho\tau\quad\text{and}\quad \varphi(\tau\lambda)\varphi\inv = \lambda\tau.$$
From there, by rearranging the left-hand sides of the equations, we get
$$(\varphi\rho\varphi\inv)(\varphi\tau\varphi\inv) = \varphi(\rho\tau)\varphi\inv = \rho\tau\quad\text{and}\quad (\varphi\tau\varphi\inv)(\varphi\lambda\varphi\inv) = \varphi(\tau\lambda)\varphi\inv = \tau\lambda.$$
Finally, we obtain
$$\varphi\rho\varphi\inv = \rho\tau(\varphi\tau\varphi\inv)\quad\text{and}\quad \varphi\lambda\varphi\inv = (\varphi\tau\varphi\inv)\tau\lambda.$$
If $\varphi\in\Aut(M)$, then, in particular, it commutes with $\tau$.
On the other hand, if $\varphi$ commutes with $\tau$, then the last two equations imply that it also must commute with $\rho$ and $\lambda$, i.e., $\varphi\in\Aut(M)$.
\end{proof}

The previous lemma suggest an approach for computing the generators of the automorphism group of $M$.

\begin{lemma}
\label{lem:large_nonorientable}
Let $M = (F, \lambda,\rho,\tau)$ be a map on a non-orientable surface of genus $\gamma > 2$. Then it is possible to compute the generators of $\Aut(M)$ in time $f(\gamma)|F|$.
\end{lemma}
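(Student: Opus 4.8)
The plan is to transfer everything to the orientable double cover $\widetilde M=(D,R,L)$ introduced just above, and then extract $\Aut(M)$ from $\Auto(\widetilde M)$ using Lemma~\ref{lem:nonor_tool}. First I would build $\widetilde M$ from $M=(F,\lambda,\rho,\tau)$ in time $\calO(|F|)$ by setting $D:=F$, $R:=\rho\tau$, $L:=\tau\lambda$; this is a bona fide oriented map since $M$ is non-orientable, and $\widetilde{\chi}=2\chi(S)=2(2-\gamma)=4-2\gamma$, so $\widetilde M$ lies on an orientable surface of genus $\tilde g=\gamma-1$. The hypothesis $\gamma>2$ forces $\tilde g\ge 2$, equivalently $\widetilde{\chi}<0$.

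Next I would run the algorithm for oriented maps developed in Sections~\ref{sec:reductions} and~\ref{sec:uniform} on $\widetilde M$; it returns generators of $\Auto(\widetilde M)$, as permutations of $D=F$, in time $f_1(\tilde g)\,|D|=f_1(\gamma-1)\,|F|$. The decisive structural fact is that $|\Auto(\widetilde M)|$ is bounded in terms of $\gamma$ alone. Indeed, the reductions of Section~\ref{sec:reductions} transform $\widetilde M$ into an irreducible labeled oriented map $U$ with $\Auto(U)\cong\Auto(\widetilde M)$, and since the reductions do not change the underlying surface we have $\chi(U)=\widetilde{\chi}<0$, so $U$ is uniform and face-normal; as in the proof of Proposition~\ref{prop:uniform_negative_characteristic} (which rests on Hurwitz's theorem), the number of vertices of $U$ is bounded and, by Theorem~\ref{thm:light}, so is their degree, hence $U$ has at most $f_2(\gamma)$ darts, whence $|\Auto(\widetilde M)|=|\Auto(U)|\le f_2(\gamma)$ by semiregularity. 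Using the returned generators together with the semiregular action on $D$ --- an automorphism is determined by the image of a single fixed dart --- I can therefore list \emph{all} of $\Auto(\widetilde M)$ explicitly, as at most $f_2(\gamma)$ permutations of $F$, in time $f_3(\gamma)\,|F|$.

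Finally I would invoke Lemma~\ref{lem:nonor_tool}, which identifies $\Aut(M)$ with the centralizer $\{\varphi\in\Auto(\widetilde M):\varphi\tau=\tau\varphi\}$. For each of the at most $f_2(\gamma)$ listed elements $\varphi$ I test whether $\varphi\tau=\tau\varphi$ by comparing the two permutations of $F$, at cost $\calO(|F|)$ each, hence $f_4(\gamma)\,|F|$ altogether; the elements that pass the test are precisely the elements of $\Aut(M)$, which I then output (optionally first thinning the list down to at most $\log_2 f_2(\gamma)$ generators). Summing the four contributions, the whole procedure runs in time $f(\gamma)\,|F|$ for a computable $f$.

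The step I expect to require the most care is the a priori bound $|\Auto(\widetilde M)|\le f_2(\gamma)$: it is precisely this that makes the strategy ``list the group, then keep the elements commuting with $\tau$'' linear rather than quadratic in $|F|$, and it is where the assumption $\gamma>2$ (equivalently, $\widetilde{\chi}<0$) is used in an essential way --- when $\gamma\le 2$ the cover is spherical or toroidal, $\Auto(\widetilde M)$ need not be small, and one must instead compute the centralizer of $\tau$ by a dedicated routine (cf.\ Lemma~\ref{lem:centralizer}). A minor bookkeeping point is that the generators returned by the oriented-map algorithm are a priori permutations of the dart set of the irreducible map $U$, which is only a subset of $D$; each must be lifted to a permutation of $D=F$, which costs $\calO(|F|)$ per generator by semiregularity, as explained in Section~\ref{sec:reductions}.
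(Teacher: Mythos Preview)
Your proof is correct and follows essentially the same approach as the paper: pass to the antipodal double cover $\widetilde M$, run the oriented-map algorithm of Sections~\ref{sec:reductions}--\ref{sec:uniform} to obtain $\Auto(\widetilde M)$, use the Hurwitz bound (the paper cites Riemann--Hurwitz directly to get $|\Auto(\overline M)|\le 84(\tilde g-1)$ with $\tilde g=\gamma-1$, while you equivalently bound the size of the irreducible map and invoke semiregularity) to enumerate the whole group, and then filter by commutation with $\tau$ via Lemma~\ref{lem:nonor_tool}. Your write-up is, if anything, more explicit than the paper's about the lifting of each $\overline\varphi$ from the irreducible map back to a permutation of $D=F$ and about why the assumption $\gamma>2$ is exactly what makes the enumeration step linear.
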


\begin{proof}
First, we construct $\widetilde{M}$ in time $\calO(|F|)$.
Using the algorithm from Sections~\ref{sec:reductions} and~\ref{sec:uniform}, we construct the associated labeled uniform map $\overline{M}$ and compute the generators of $\Auto(\widetilde{M})$.

Suppose that $\gamma > 2$.
By Riemann-Hurwitz theorem, we have $|\Auto(\overline{M})|\leq 84(g-1)$, where $g = \gamma - 1$.
For each $\overline{\varphi} \in \Auto(\overline{M})$, we construct the unique extension $\varphi \in \Auto(\widetilde{M})$ and check whether $\varphi\tau\varphi\inv = \tau$ in time $\calO(|F|)$.
The previous Lemma~\ref{lem:nonor_tool} states that $\Aut(M)$ consists exactly of those $\varphi$.
\end{proof}

To proceed with maps on the projective plane and Klein bottle, we define the \emph{action diagram} for a permutation group $G \leq\Sym(\Omega)$ with a generating set $S$.
To every generator $g \in S$ we assign a unique color $c_g$.
The action diagram $\calA(G)$ of $G$ is an edge-colored oriented graph with the vertex set $\Omega$.
There is an oriented edge $x\to y$ of color $c_g$ if and only $gx = y$.
We first prove the following technical lemma.

\begin{lemma}
\label{lem:centralizer}
Let $C \leq\Sym(\Omega)$ be a semiregular cyclic group and let $\tau\in\Sym(\Omega)$ be fixed-point-free involution.
Then there is an algorithm which finds the subgroup $L\leq C$ centralizing $\tau$ in time $\calO(|\Omega|)$.
\end{lemma}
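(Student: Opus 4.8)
The plan is to reduce the whole computation to $k$ independent instances of the labeled-cycle rotation problem already solved (the $\calO(n)$-time algorithm producing the generator of the rotation group of a labeled $n$-cycle), where $k$ is the number of orbits of $C$, and then to combine the $k$ answers by a least common multiple. First fix a generator $\sigma$ of $C$ and put $n=|C|$. Since $C$ is semiregular, its orbits $O_1,\dots,O_k$ all have size $n$ and $\sigma$ acts on each $O_j$ as a single $n$-cycle, so every power $\sigma^d$ acts on each orbit as ``rotation by $d$''. Walking along each $\sigma$-cycle once, I precompute in time $\calO(|\Omega|)$, for every $x\in\Omega$, its orbit index $\col(x)\in\{1,\dots,k\}$ and its position $p(x)\in\cyc_n$ inside that orbit, and I fix base points $x_j\in O_j$ with $p(x_j)=0$. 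In the language just introduced, one is computing $C\cap\Aut(\calA)$, where $\calA$ is the action diagram of $\langle\sigma\rangle$ augmented by the perfect matching given by $\tau$; since $\sigma^d$ automatically preserves the $\sigma$-arcs, $L$ is exactly the set of those $\sigma^d$ that preserve the $\tau$-edges.

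The key point is the following reformulation. For $d\in\cyc_n$, the element $\sigma^d$ commutes with $\tau$ if and only if $\tau(\sigma^{i+d}x_j)=\sigma^d\,\tau(\sigma^i x_j)$ for every orbit $O_j$ and every $i\in\cyc_n$ --- this is just the pointwise relation $\sigma^d\tau=\tau\sigma^d$, because the points $\sigma^i x_j$ ($1\le j\le k$, $i\in\cyc_n$) exhaust $\Omega$. Writing $y^{(j)}_i:=\tau(\sigma^i x_j)$, the right-hand side $\sigma^d y^{(j)}_i$ lies in orbit $\col(y^{(j)}_i)$ at position $p(y^{(j)}_i)+d$, so the condition at $(j,i)$ says precisely that $\col(y^{(j)}_{i+d})=\col(y^{(j)}_i)$ and $p(y^{(j)}_{i+d})\equiv p(y^{(j)}_i)+d \pmod n$. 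Setting $q^{(j)}_i:=(p(y^{(j)}_i)-i)\bmod n$, the latter becomes $q^{(j)}_{i+d}=q^{(j)}_i$. Hence, if $Z_j$ denotes the labeled $n$-cycle whose vertex $i$ carries the label $\big(\col(y^{(j)}_i),\,q^{(j)}_i\big)$ (encoded as a single integer bounded by $|\Omega|$), then $\sigma^d$ commutes with $\tau$ if and only if rotation by $d$ is an automorphism of $Z_j$ for every $j$.

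I then run the cycle-rotation algorithm on each $Z_j$ to obtain, in time $\calO(n)$, the generator of its rotation group, i.e.\ its least period $d_j\mid n$. Since the set of periods of a cyclic string of length $n$ is a subgroup of $\cyc_n$, $\sigma^d$ commutes with $\tau$ precisely when $d_j\mid d$ for all $j$, that is, when $\lcm(d_1,\dots,d_k)\mid d$; therefore $L=\langle\sigma^{d^\ast}\rangle$ with $d^\ast=\lcm(d_1,\dots,d_k)$, and $\sigma^{d^\ast}$ is returned. Building all the cycles $Z_j$ from the precomputed tables for $\col$ and $p$ and running the $k$ cycle computations cost $\calO(kn)=\calO(|\Omega|)$, and the final $\lcm$ together with the explicit computation of the permutation $\sigma^{d^\ast}$ are again $\calO(|\Omega|)$, so the total running time is $\calO(|\Omega|)$. (If $C$ is presented by several generators rather than a single $\sigma$, a generator can be extracted beforehand in $\calO(|\Omega|)$ time by standard means.)

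The main obstacle is to see that the commuting relation really does convert into exact periodicity: the shift ``$q^{(j)}_i=p(y^{(j)}_i)-i$'' is essential, since the raw positions-within-an-orbit are not shift-invariant, and one must also notice that $\tau$ need not permute the orbits of $C$, which is why each vertex label has to additionally record the orbit index $\col(y^{(j)}_i)$; the fact that the $C$-orbits cover $\Omega$ is what makes the per-orbit conditions jointly equivalent to $\sigma^d\tau=\tau\sigma^d$. Everything else is routine, given the linear-time cycle-rotation subroutine established earlier.
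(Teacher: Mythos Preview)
Your argument is correct, and it is in fact cleaner than the paper's own proof.

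The paper proceeds in two stages. In the first stage it labels each point $x$ only by the $\varphi$-orbit containing $\tau(x)$, runs the labeled-cycle routine on every $\varphi$-orbit, and intersects the resulting cyclic groups (via a $\gcd$) to obtain the largest subgroup $K=\langle\varphi^{m'}\rangle\leq C$ for which $\tau$ permutes the $K$-orbits. In the second stage it restricts to the $K$-orbits (which $\tau$ now matches in pairs or fixes setwise), relabels points by signed cyclic distances $\pm|x-\tau(x)|$, and runs the cycle routine again to isolate the actual centralizer inside $K$.

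You collapse these two stages into one by attaching to the $i$-th vertex of the $j$-th cycle the pair $(\col(\tau(\sigma^i x_j)),\,p(\tau(\sigma^i x_j))-i\bmod n)$. The crucial trick that makes this work is exactly the one you highlight: subtracting $i$ from the raw position turns the condition $p(y^{(j)}_{i+d})=p(y^{(j)}_i)+d$ into pure shift-invariance $q^{(j)}_{i+d}=q^{(j)}_i$, so that commuting with $\tau$ becomes literally the period condition for the compound label. This avoids the paper's second pass and its somewhat delicate case analysis ($\tau(O'_i)=O'_i$ versus $\tau(O'_i)=O'_j$ with $i\ne j$). The price is that your labels range over $\{1,\dots,k\}\times\cyc_n$, i.e.\ up to $|\Omega|$ many values, rather than the smaller alphabets used in each of the paper's two steps; but the cycle routine handles integer labels bounded by $|\Omega|$ without difficulty, so nothing is lost.

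One small remark: you finish by taking $d^\ast=\lcm(d_1,\dots,d_k)$ and assert this is $\calO(|\Omega|)$. Since each $d_j\mid n$, the cleanest way to justify this within the paper's framework is to note that $\langle\sigma^{d^\ast}\rangle=\bigcap_j\langle\sigma^{d_j}\rangle$ has order $\gcd(n/d_1,\dots,n/d_k)$, so $d^\ast=n/\gcd(n/d_1,\dots,n/d_k)$, and the paper's $\gcd$ analysis in Section~\ref{sec:labels} then gives the linear bound directly.
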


\begin{proof}
Let $C = \langle\varphi\rangle$, where $|\varphi| = n$.
It is sufficient to find the smallest $m > 0$ such that $\varphi^m$ commutes with $\tau$.
Then the group $A=\langle \tau,\varphi^m\rangle$ is abelian of order $n/m$ or $2n/m$.
Since both $\varphi$ and $\tau$ are fixed-point-free permutations, the orbits of $A$ are either of size $n/m$, or of size $2n/m$ and the respective action diagrams
are either M\"{o}bius ladders, or ladders (prisms).

\emph{Step 1.}
We first determine the largest cyclic subgroup $K\leq \langle\varphi\rangle$ satisfying the property that the orbits 
of $\langle K,\tau\rangle$ have size $|K|$, or $2|K|$. If this is the case, then $\tau$ matches an orbit $O$ of $K$ to a unique orbit $\tau(O)$, where $\tau(O)=O$ may happen.

The group $\langle\varphi\rangle$ acts semiregularly on $\Omega$ and hence there are exactly $|\Omega|/n$ orbits $O_1,\dots,O_{|\Omega|/n}$ of size $n$.
We find the smallest $m' > 0$ such that for every $i = 1,\dots,|\Omega|/n$, and every $x \in O_i$,  there is $j$ such that $\{\tau(x), \tau\varphi^{m'}(x)\} \subseteq O_j$.
Let $X_i := \calA(\langle\varphi\rangle, O_i)$, for $i = 1,\dots,|\Omega|/n$.
Note that each $X_i$ is an oriented cycle, a Cayley graph of a cyclic group.
We further label the vertices of $X_i$ such that $\ell_i(x) = j$ if $\tau x \in O_j$.

For each labeled cycle $X_i$, we use the algorithm of Section~\ref{sec:uniform} to compute the integer $k_i$ such that $\Auto(X_i) \cong \cyc_{k_i}$.
We have $\Auto(X_i) = \langle\varphi^{m_i}\rangle$, where $m_i := n/k_i$, for $i = 1,\dots,|\Omega|/n$.
Each $\varphi^{m_i}$ is label-preserving on $X_i$, i.e., for every $x \in O_i$, $\ell(x) = \ell\varphi^{m_i}(x)$.
By the definition of $\ell$, the points $\tau(x)$ and  $\tau\varphi^{m_i}(x)$ belong to the same orbit of $\varphi$.
Clearly,
$$\langle\varphi^{m'}\rangle = \bigcap_{i=1}^{|D|/n}\langle\varphi^{m_i}\rangle.$$
This implies that $m' = n/k$, where $k = \gcd(k_1,\dots,k_{|\Omega|/n})$.

\emph{Step 2.}
We find $m$ such that $L = \langle\varphi^m\rangle$ commutes with $\tau$.
Clearly, $L$ is a subgroup of $K = \langle\alpha\rangle$, where $\alpha = \varphi^{m'}$.
Let $O_1',\dots,O_{|\Omega|/k}'$ be the orbits of $K$, and we define $X_i' := \calA(\langle\varphi^{m'}\rangle, O_i')$, for $i = 1,\dots,|D|/k$.
From the definition of $m'$, it follows that $\tau(O_i') = O_j'$, for some $j$.
In other words, $\tau$ defines a perfect matching between the points of $O_i'$ and $O_j'$.
We distinguish two cases.
\begin{itemize}
\item
$\tau(O_i') = O_i'$.
We identify the points of $O_i$ with $\cyc_k = \{0,\dots,k-1\}$.
For a point $x \in O_i$, with $|x - \tau(x)| = k/2$, we define $\ell(x) := k/2$ and $\ell\tau(x) := k/2$.
For a point $x \in O_i$, with $|x - \tau(x)| < k/2$, we define $\ell(x) := +|x - \tau(x)|$ and $\ell\tau(x) := -|x - \tau(x)|$.

\item
$\tau(O_i') = O_j'$, for $i\neq j$.
We identify the points of $O_i'\cup O_j'$ with $\cyc_k \cup \cyc_k = \{0,\dots,k-1\} \cup \{0,\dots,k-1\}$ as follows.
First, we identify $O_i'$ with $\cyc_k$.
Then, for $x \in O_i'$ identified with $0$, we identify $\tau(x)$ with $0$, and extend uniquely using the action of $(\cyc_{k}, +)$.
Similarly, as in the previous case we define a labeling $\ell$ of $O_i$ and $O_j$.
For a point $x \in O_i$, with $|x - \tau(x)| = k/2$, we define $\ell(x) := k/2$ and $\ell\tau(x) := k/2$.
For a point $x \in O_i$, with $|x - \tau(x)| < k/2$, we define $\ell(x) := +|x - \tau(x)|$ and $\ell\tau(x) := -|x - \tau(x)|$.
\end{itemize}

We use the algorithm of Section~\ref{sec:uniform} to compute the integer $k_i'$ such that $\Auto(X_i) \cong \cyc_{k_i'}$, for $i = 1,\dots,|\Omega|/k$.
Let $m_i' := k/{k_i'}$.
By the definition of $\ell$, we have, for every $x \in O_i$,
\begin{align*}
\ell(x) &= \ell\alpha^{m_i'}(x),\\
\pm |x - \tau(x)| &= \pm |\alpha^{m_i'}(x) - \tau\alpha^{m_i'}(x)|.
\end{align*}
This exactly means that $\tau\alpha^{m_i'}(x) = \alpha^{m_i'}\tau(x)$.
Finally, $L = \langle\varphi^m\rangle$ is the intersection
$$\bigcap_{i = 1}^{|\Omega|/k}\langle\alpha^{m_i'}\rangle.$$
This implies that $m = k/\gcd(k_1',\dots,k_{|\Omega|/k}')$.

Both Step 1 and Step 2 can be computed in linear time.
In Step 1, we have $|\Omega|/n$ cycles of size $n$,
The time spent by the algorithm of Section~\ref{sec:uniform} on each of the cycles is $\calO(|\Omega|/n)$.
The greatest common divisor of $|\Omega|/n$ numbers in the interval $[1,|\Omega|]$ can be computed in time $\calO(|\Omega|)$.
Thus the overall complexity of Step 1 is $\calO(|\Omega|)$.
Note that exactly the same argument works for Step 2.
This completes to proof for $\Aut(\widetilde{M})\cong \cyc_n$.
\end{proof}

%\emph{M\"{o}bius ladder} is an even cycle of length $2k$ with every two vertices at distance $k$ connected by an edge.
%The following lemma is obvious.

%\begin{lemma}
%Let $G = \langle g, h\rangle \leq \Sym(\Omega)$ be semiregular.
%Then, $gh = hg$ if and only if every connected component of the action diagram $\calA(G)$ is either a prism or a M\"{o}bius ladder.
%\end{lemma}
%
%\begin{proof}
%By semiragularity, the subgraph of color $c_g$ consists of directed cycles of length $2k$, and the subgrpah of color $c_h$ is a matching.
%If a connected component of $\calA(G)$ consits of just one cycle, then it is a M\"{o}bius ladder, otherwise it is a prism.
%\end{proof}

\begin{lemma}
\label{lem:projective_plane}
For a map $M = (F, \lambda,\rho,\tau)$ on the projective plane, it is possible to compute the generators of $\Aut(M)$ in time $\calO(|F|)$.
\end{lemma}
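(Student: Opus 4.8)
The plan is to pass to the antipodal double cover, where the surface becomes the sphere, and then to reduce the computation of $\Aut(M)$ to the cyclic-centralizer routine of Lemma~\ref{lem:centralizer}.

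First I would build $\widetilde{M}=(D,R,L)=(F,\rho\tau,\tau\lambda)$ in time $\calO(|F|)$. Since the projective plane has Euler characteristic $1$ and $\widetilde{\chi}=2\chi$, the cover $\widetilde{M}$ is an oriented spherical map on $|F|$ darts. Applying the algorithm of Sections~\ref{sec:reductions}--\ref{sec:uniform} to $\widetilde{M}$ produces, in time $\calO(|F|)$, a generating set of $\Auto(\widetilde{M})$, which is a finite group of orientation-preserving automorphisms of the sphere; reading off the irreducible spherical map produced by the reductions, $\Auto(\widetilde{M})$ is either of bounded order (Platonic solids, Archimedean solids, the pseudo-rhombicuboctahedron) or isomorphic to $\cyc_n$ (bouquets) or to $\dih_n$ (cycles, dipoles, prisms, antiprisms). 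By Lemma~\ref{lem:nonor_tool} we have $\Aut(M)=C_{\Auto(\widetilde{M})}(\tau)$, so it remains to compute this centralizer from the generators of $G:=\Auto(\widetilde{M})$ and the fixed-point-free involution $\tau$, staying within $\calO(|F|)$; recall that $G$ is semiregular on $D$ because $\Aut^+$ always is.

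If $|G|$ is bounded, I would enumerate the constantly many elements of $G$, each as a bounded word in the generators, evaluate them as permutations of $D$, and retain those commuting with $\tau$; this costs $\calO(|F|)$. If $G=\cyc_n$, then $G$ is a semiregular cyclic group and $\tau$ a fixed-point-free involution, so Lemma~\ref{lem:centralizer} applies directly and returns a generator of $C_G(\tau)=\Aut(M)$ in time $\calO(|F|)$. The remaining case $G=\dih_n=\langle\varphi\rangle\rtimes\langle\sigma\rangle$, with $\langle\varphi\rangle$ the index-two cyclic subgroup (identified from any generating set in linear time), is the delicate one. Since $[G:\langle\varphi\rangle]=2$, we have $[C_G(\tau):C_G(\tau)\cap\langle\varphi\rangle]\le 2$, and Lemma~\ref{lem:centralizer} applied to the semiregular cyclic group $\langle\varphi\rangle$ and $\tau$ yields a generator $\varphi^m$ of $C_G(\tau)\cap\langle\varphi\rangle$ in linear time. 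It then suffices to decide whether some reflection $\varphi^a\sigma$ commutes with $\tau$; multiplying the relation $\tau\varphi^a\sigma=\varphi^a\sigma\tau$ on the right by $\sigma$ shows this holds if and only if $\varphi^a\mu\varphi^{-a}=\tau$, where $\mu:=\sigma\tau\sigma$ is a fixed fixed-point-free involution. If $\varphi^{a_0}$ and $\varphi^{a_1}$ both conjugate $\mu$ to $\tau$, then $\varphi^{a_1-a_0}$ centralizes $\tau$, so the set of such exponents $a$ is empty or a single residue class modulo $m$; a representative $a_0$ (or a certificate that none exists) can be found by the same two-phase device as in Lemma~\ref{lem:centralizer} --- attaching, on each $\langle\varphi\rangle$-orbit, integer labels that record the positions of $\mu$- and $\tau$-partners and running the labeled-cycle algorithm of Section~\ref{sec:uniform_sphere} across all orbits simultaneously --- in time $\calO(|D|)=\calO(|F|)$. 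Then $C_G(\tau)$ is $\langle\varphi^m\rangle$ if no such $a_0$ exists and the dihedral group $\langle\varphi^m,\varphi^{a_0}\sigma\rangle$ otherwise, and its generators are output as permutations of $F$.

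The main obstacle is exactly the dihedral reflection coset: locating the exponent $a_0$ (or ruling it out) without testing all $n$ reflections individually. This is why the reduction to the single conjugation problem $\varphi^a\mu\varphi^{-a}=\tau$ and the adaptation of the orbit-labelling argument behind Lemma~\ref{lem:centralizer} form the technical core; the cyclic, bounded-order, and bookkeeping parts are then routine.
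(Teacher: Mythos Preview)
Your approach is the paper's: pass to the spherical double cover, compute $\Auto(\widetilde{M})$ via the reduction machinery, dispose of the bounded-order irreducibles by brute force, and invoke Lemma~\ref{lem:centralizer} for the cyclic part. The one substantive difference lies in the dihedral case $\Auto(\widetilde{M})\cong\dih_n=\langle\varphi,\psi\rangle$. The paper simply tests whether the chosen reflection generator $\psi$ commutes with $\tau$ and then runs the cyclic-centralizer routine on $\langle\varphi\rangle$; it does not say how to detect a reflection $\varphi^a\psi\in C_G(\tau)$ when $\psi$ itself fails to commute with $\tau$. You flag precisely this gap, reduce the search over the reflection coset to the single conjugation equation $\varphi^a\mu\varphi^{-a}=\tau$ with $\mu=\sigma\tau\sigma$, observe that its solution set is either empty or a coset of $\langle\varphi^m\rangle$, and propose to solve it by the same orbit-labelling device that underlies Lemma~\ref{lem:centralizer}. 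That reduction is correct (your derivation and the coset observation both check out, and $\mu$ is fixed-point-free since $\sigma$ is a bijection), so your treatment of the dihedral case is more complete than the paper's while remaining within $\calO(|F|)$.
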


\begin{proof}
Note that in this case $\widetilde{M}$ is a spherical map.
If the reduced map $\overline{M}$ from $\widetilde{M}$ does not belong to one of the infinite families, then we may use the same approach as in the case when $\gamma > 2$.
Otherwise, $\overline{M}$ is one of the following: bouquet, dipole, cycle, prism, antiprism.
We only deal with the cycle, since the other cases are reduced to it.
If $\overline{M}$ is a cycle, then we have that either $\Auto(\overline{M}) \cong \Auto(\widetilde{M})\cong \dih_n$ or $\Auto(\overline{M}) \cong \Auto(\widetilde{M}) \cong \cyc_n$, for some $n$.

First, suppose that $\Auto(\widetilde{M}) \cong \cyc_n$.
Let $\varphi$ be the generator of $\Auto(\widetilde{M})$ , i.e., $\langle\varphi\rangle = \Auto(\widetilde{M})$ and $\varphi^n = \id$.
By Lemma~\ref{lem:nonor_subgroup}, $\Aut(M)\leq \Auto(\widetilde{M})$ and therefore, $\Aut(M)$ is also a cyclic group.
By Lemma~\ref{lem:nonor_tool}, it is sufficient to find the smallest $m > 0$ such that $\varphi^m$ commutes with $\tau$.
By Lemma~\ref{lem:centralizer}, this can be done in time $\calO(|D|)$.

Suppose that $\Aut(\widetilde{M}) \cong \dih_n$.
It is known that  $\dih_n$ can be written as the semidirect product\footnote{A group $G$ is a semidirect product of $N$ and $H$ if $N,H\leq G$, $N\cap H = \{1\}$, and $N$ is a normal subgroup of $G$. We write $G = N\rtimes H$.} $\cyc_n \rtimes \cyc_2$, i.e., $\dih_n$ has two generators, one of order $n$ and the other of order $2$.
There are $\varphi,\psi \in \Auto(\widetilde{M})$ such that $\varphi^n = \id$, $\psi^2 = \id$, and $\langle\varphi,\psi\rangle = \Auto(\widetilde{M})$.
By Lemma~\ref{lem:nonor_subgroup}, $\Aut(M) \leq \Auto(\widetilde{M})$, i.e., there is $k$ dividing $n$ such that $\Aut(M) \cong \dih_k$ or $\Aut(M)\cong \cyc_k$.
To check whether $\psi \in \Aut(M)$, by Lemma~\ref{lem:nonor_tool}, it suffices to check if $\psi\tau\psi\inv = \tau$, which can be done in linear time.
To investigate the cyclic subgroup $\langle \psi\rangle$, we proceed as in the cyclic case above.
\end{proof}

\begin{lemma}
\label{lem:klein_bottle}
Let $M = (F, \lambda,\rho,\tau)$ be a map on the Klein bottle. Then it is possible to compute the generators of $\Aut(M)$ in time $\calO(|F|)$.
\end{lemma}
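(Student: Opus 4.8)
The plan is to mirror the projective–plane argument (Lemma~\ref{lem:projective_plane}), replacing the spherical double cover by a toroidal one and the cyclic‑group tool (Lemma~\ref{lem:centralizer}) by the grid–subgroup machinery of Section~\ref{sec:uniform_torus}. Since the Klein bottle has $\chi=0$, the relation $\widetilde\chi=2\chi$ shows that the antipodal double cover $\widetilde M=(F,\rho\tau,\tau\lambda)$ is an orientable map with $\chi=0$, i.e.\ a toroidal map, and it is built in $\calO(|F|)$ time. Running the algorithm of Sections~\ref{sec:reductions} and~\ref{sec:uniform} on $\widetilde M$ produces a uniform toroidal labeled map $\overline M$ with $\Auto(\overline M)\cong\Auto(\widetilde M)$; by Theorem~\ref{thm:uniform_toroidal} together with the structure lemma of~\cite{such_torus} (and the reduction of uniform toroidal maps to homogeneous ones of type $\{4,4\}$ or $\{6,3\}$) we obtain, in linear time, generators of $\Auto(\widetilde M)$ in the form $T\rtimes H$, where $T\cong\cyc_r\times\cyc_s$ acts regularly on the vertices and $|H|\le 6$. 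By semiregularity these lift in linear time to generators of $\Auto(\widetilde M)$ as a subgroup of $\Sym(F)$ (we need the action on all of $F$, since $\tau$ acts on $F$ and need not descend to the reduced dart set). If $rs$, hence $|\Auto(\widetilde M)|$, is bounded by an absolute constant, we simply enumerate $\Auto(\widetilde M)$ and keep those $\varphi$ with $\varphi\tau=\tau\varphi$; by Lemma~\ref{lem:nonor_tool} this is $\Aut(M)$, and we are done.

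In the main case $rs$ is large. The key structural observation is that $\tau$ normalizes $\langle R,L\rangle$: a direct computation using $\lambda\tau=\tau\lambda$ gives $\tau L\tau=\tau(\tau\lambda)\tau=\lambda\tau=\tau\lambda=L$ and $\tau R\tau=\tau(\rho\tau)\tau=\tau\rho=R^{-1}$, so $\tau$ maps $\langle R,L\rangle=\langle R^{-1},L\rangle$ onto itself and therefore normalizes its centralizer $\Auto(\widetilde M)$. Hence conjugation by $\tau$ induces an automorphism $\theta$ of $\Auto(\widetilde M)$, and Lemma~\ref{lem:nonor_tool} states $\Aut(M)=C_{\Auto(\widetilde M)}(\tau)=\operatorname{Fix}(\theta)$. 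Moreover $\theta$ preserves the translation subgroup $T$: passing to the universal cover, $\tau$ lifts to an orientation‑reversing affine transformation of the Euclidean plane, which normalizes the lattice of translations, so $\theta(T)=T$ (the degenerate cases in which $T$ might fail to be $\tau$‑invariant are absorbed by the bounded case above). Consequently $C_T(\tau)=\operatorname{Fix}(\theta|_T)$ is a subgroup of $T$, and the computation splits into (i) determining $C_T(\tau)$, and (ii) for each of the $\le 6$ cosets of $T$ deciding whether it meets $\operatorname{Fix}(\theta)$ and, if so, producing a representative.

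To carry this out I first make $\theta$ explicit. Fixing a base point and performing a breadth‑first search on a single $T$‑orbit of $F$ --- which, being a Cayley graph of $T$, is exactly a cyclic orthogonal grid in the sense of Section~\ref{sec:uniform_torus} --- I read off the $\cyc_r\times\cyc_s$‑coordinates of $\tau\alpha\tau$ and $\tau\beta\tau$ for the two generators $\alpha,\beta$ of $T$, obtaining a $2\times2$ integer matrix for $\theta|_T$; similarly I compute, for each coset representative $h$ of $T$, the coset and the $T$‑coordinate of $\tau h\tau$. All of this is $\calO(|F|)$. Then $C_T(\tau)=\ker(\theta|_T-\operatorname{id})$ is obtained by solving a $2\times2$ system of linear congruences modulo $r$ and $s$ via the Smith normal form, producing two generators $\delta_1,\delta_2$; this is the two‑dimensional analogue of the cyclic centralizer computation in Lemma~\ref{lem:centralizer} and is the step that replaces it here. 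For (ii), the coset $Th$ meets $\operatorname{Fix}(\theta)$ iff $\theta$ fixes $Th$ (that is, $\theta(h)h^{-1}\in T$) and $\theta(h)h^{-1}$ lies in the image of the homomorphism $t\mapsto t\,\theta(t)^{-1}$ of the abelian group $T$; when this holds one representative $th$ is found by solving one further linear congruence. Collecting $\delta_1,\delta_2$ together with the at most five coset representatives found yields generators of $\Aut(M)$, in total time $\calO(|F|)$.

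The step I expect to require the most care is justifying that $\tau$ normalizes the translation subgroup $T$ of $\Auto(\widetilde M)$ --- and, relatedly, pinning down $T$ canonically from the output of the reduction algorithm --- together with the bookkeeping needed to lift every automorphism consistently from $\overline M$ back to the flag set $F$. Once $\theta|_T$ is available as an integer matrix on $\cyc_r\times\cyc_s$, the remaining linear algebra over $\mathbb{Z}$ is routine.
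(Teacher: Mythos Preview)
Your proposal is correct and sound, but the paper takes a more elementary route. Instead of computing the conjugation action $\theta|_T$ as a $2\times 2$ matrix and solving $\ker(\theta|_T-\id)$ via the Smith normal form, the paper simply applies the cyclic centralizer Lemma~\ref{lem:centralizer} \emph{twice}---once to $\langle\varphi\rangle$ and once to $\langle\psi\rangle$---to obtain the minimal $m,n$ with $\varphi^m,\psi^n\in C_T(\tau)$, and then argues via the direct-product decomposition $T=\langle\varphi\rangle\times\langle\psi\rangle$ that $C_T(\tau)=\langle\varphi^m\rangle\times\langle\psi^n\rangle$. For the complement, the paper just brute-forces $H\cap G_v$ rather than handling each coset of $T$ separately as you do. Your approach is more robust: the paper's factor-by-factor argument implicitly assumes that conjugation by $\tau$ preserves each cyclic direct factor of $T$ (and that $H=(H\cap T)(H\cap G_v)$, which needs $\tau$ to normalize the chosen vertex stabilizer $G_v$), whereas you only need $\tau$ to normalize $T$ as a whole---precisely the step you correctly flag as the delicate one, and which the paper does not justify either. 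In exchange, the paper avoids any new linear-algebraic machinery and reuses Lemma~\ref{lem:centralizer} verbatim.
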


\begin{proof}
We form the antipodal double-cover $\widetilde{M} = (D,R,L)$ of $M$, which is in this case a toroidal map, and compute the generators of $G = \Auto(\widetilde{M}) = T\rtimes G_v$, where $T = \langle\varphi\rangle\times \langle\psi\rangle$ and $G_v$ is the vertex-stabilizer with $|G_v|\leq 6$.
Further, we assume that $|\varphi| = a$ and $|\varphi| = b$.
By Lemma~\ref{lem:nonor_tool}, we need to determine the subgroup $H$ of $G$ centralized by $\tau$.
For $H\cap G_v$, this is done by brute-force, checking the commutativity with $\tau$ for every element individually.
We show how to find in linear time the generators of $K = H\cap T$.
By Lemma~\ref{lem:centralizer}, we find minimal $m > 0$ and $n > 0$ such that $\varphi^m$ and $\psi^n$ commute with $\tau$.

By definition, every element of $\langle\varphi^m,\psi^n\rangle = \langle\varphi^m\rangle \times \langle\psi^n\rangle$ commutes with $\tau$.
Conversely, let $\pi = \varphi^k\psi^\ell \in T$ be such that $\tau\pi\tau\inv = \pi$.
By plugging in, we get
$$\tau\varphi^k\psi^\ell\tau\inv = \tau\varphi\tau\inv\tau\psi^\ell\tau\inv = \varphi^k\psi^\ell.$$
Since $T = \langle\varphi\rangle \times\langle\psi\rangle$, the last equality holds only if $\tau\varphi^k\tau\inv = \varphi^k$ and $\tau\psi^\ell\tau = \psi^\ell$.
It follows that $\varphi^k \in \langle\varphi^m\rangle$ and $\psi^\ell\in\langle\psi^n\rangle$, and consequently, $\pi \in \langle\varphi^m,\psi^n\rangle$.
\end{proof}

\section{Complexity of the algorithm and summary}
\label{sec:labels}

In this section, we investigate the complexity of various parts of our algorithm.
We argue that it runs  in time linear in the size of the input, i.e., in  time $\calO(|D_1|+|D_2|)$.
We show a representation of the function $\ell$ such that $\ell(x)$ and $\ell(y)$ can be compared in constant time.
We also describe an implementation of the function $\lab$ that computes the new label in  time proportional to the number of its arguments.
At the end, we give a summary of the whole algorithm.

\heading{Reductions using degree type.}
We analyze the complexity of the reductions from Section~\ref{sec:reductions} when they use degree type.
If a reduction reduces the sum $v(M) + e(M)$ by $k$, then the reduction must be executed in time $\calO(k)$.
This is obvious for all reductions except $\periodic$.
In this case, the submap $N$ affected by the reduction is a disjoint union of connected components.
The sum $v(M) + e(M)$ decreases exactly by $v(N)$.
The submap $N$ can be located using breadth-first search in time $\calO(v(N) + e(N)) = \calO(v(N))$.

\heading{Reductions using refined degree type.}
The analysis here is exactly the same as for the case when the reduction uses just the degree type.
The only difficulty is with updating the refined degree type for every vertex.
If there is a large face $f$ of size $\calO(v(M))$ incident to a vertex of small degree type, we cannot afford to update the refined degree type of every vertex incident to $f$, since the degree of $f$ may decrease just by one. To overcome this obstacle we use another trick.

We define the \emph{vertex-face incidence map} $\Gamma(M)$ of $M$
which is a bipartite quadrangular map associated to $M$. Its vertices are the vertices and centers of faces of $M$.
For every vertex $v\in V(M)$ and face-center $f\in F(M)$ of a face incident to $v$
there is an edge joining $v$ to $f$. Note that $f$ can be multiply
incident to $v$, for each such incidence there is an edge in $\Gamma(M)$. 
The map $\Gamma(M)$ can be alternatively obtained as the dual of the medial map.
Every reduction easily translates to a reduction in $\Gamma(M)$.
%\begin{itemize}
%\item
%$\normal$ removes vertices of degree one and two in $\Gamma$,
%\item
%$\adt$ deletes some vertices,
%\item
%$\ldt$ and $\pdt$ replace some vertices from $V$ by the same %number of vertices in $F$.
%\end{itemize}
We update $\Gamma(M)$ after every elementary reduction.
The important property of $\Gamma(M)$ is that if $v$ is a vertex of $M$, then the degree type of $v$ in $\Gamma(M)$ is exactly the refined degree type of $v$ in $M$.
To update the refined degree of a light vertex after a reduction it suffices to look at its degree type in $\Gamma$.
To update the refined degree type of a light vertex we look at the degree types of vertices in $\Gamma$ that correspond to its neighbors in $M$. This allows us to update the lists $\calL$ employed
by the reductions.

\heading{Greatest common divisor.}
At several places in the text, we compute the greatest common divisor of more than two numbers.
For positive integers $a,b$, the complexity of computing $\gcd(a,b)$ can be bounded by $\calO(\log(\min(a,b)/\gcd(a,b)))$.

If we have a sequence $a_0,\dots,a_{n-1}$ of integers in range $[1,N]$, for some $N$, then the complexity of computing $\gcd(a_0,\dots,a_{n-1})$ can be bounded as follows.
Put $g_0 := a_0$.
Then put $g_i := \gcd(g_{i-1}, a_i)$, for $i = 1,\dots,n-1$.
The complexity of computing $g_i$ is $\calO(\log(\min(g_{i-1},a_i)/g_i))$.
In the worst case, this is $\calO(\log(g_{i-1}/a_i)) = \calO(\log(g_{i-1}) - \log(g_i))$.
We have
$$\sum_{i=1}^{n-1}\log(g_{i-1})-\log(g_{i-1}) = \log(g_0) - \log(g_{n-1}) \leq \log(g_0) \leq \log(N).$$
The overall complexity is therefore $\calO(n + \log(N))$.
In our context, $n$ is the number of darts, i.e.~$n=|D|$, and $N$ is at most $n$, so we can compute the $\gcd$ of $n$ numbers in linear time.

\heading{Labels.}
In Section~\ref{sec:reductions}, we were using the function $\ell$ as the labeling of a map $M$ and the injective function $\lab(t,a_1,\dots,a_m)$, where $t \in \mathbb{N}$ denotes the step and every $a_i$ is a label, for constructing new labels.

First, we describe the implementation of labels, i.e., the images of $\ell$.
Every label is implemented as a \emph{rooted planted tree} with integers assigned to its nodes.
A rooted planted tree is a rooted tree embedded in the plane, i.e., by permuting the children of a node we get different trees; see Figure.~\ref{fig:labels}.
Every planted tree with $n$ nodes can be uniquely encoded by a 01-string of length $2n$.
Further, we require that the children of every node $N$ have smaller integers than their nodes.
This type of tree is also called a \emph{maximum heap}.
Such a tree can be  uniquely encoded by a string (sequence) of integers.

\begin{figure}
\centering
\includegraphics{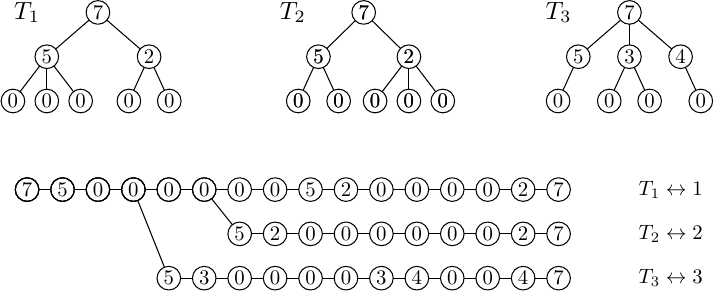}
\caption{Labels represented as planted trees together with the associated prefix tree.}
\label{fig:labels}
\end{figure}

Now we define $\lab$.
The integer $t$ represents the current step of the algorithm.
At the start, we have $t = 0$ and the map $M$ has constant labeling -- every dart is labeled by a one-vertex tree with $0$ assigned to its only vertex.
Performing a reduction increments $s$ by $1$.  For labels (rooted planted trees) $a_1,\dots,a_m$, the function $\lab(t,a_1,\dots,a_m)$ constructs a new rooted planted tree with $t$ in the root and the root joined to the roots of $a_1,\dots,a_m$.
Clearly this function is injective and can be implemented in the same running time as the corresponding reduction.

Finally, we relabel homogeneous maps by integers.
This is necessary mainly for the case when the reductions terminate at cycles since in this case we need to be able to compare labels in constant time.
Suppose that we have two homogeneous maps $M_1$ and $M_2$ with the corresponding sets of labels $\calT = \{T_1,\dots,T_k\}$ and $\calT' = \{T_1',\dots,T_k'\}$.
We construct bijections $\sigma\colon \calT \to \{1,\dots,k\}$ and $\sigma'\colon \calT'\to \{1,\dots,k\}$ such that after replacing $T_i$ by $\sigma(T_i)$ and $T_i'$ by $\sigma'(T_i')$, we get isomorphic maps.
To  construct $\sigma$ and $\sigma'$, we replace every tree in $\calT$ and $\calT'$ by a string of integers.
Then we find the lexicographic ordering of $\calT$ and $\calT'$ by  constructing two \emph{prefix trees} (sometimes in literature called \emph{trie}); see Figure~\ref{fig:labels}.
This lexicographic ordering gives the bijections $\sigma$ and $\sigma'$.
Finally, we need to check if the pre-images of every $i$ under $\sigma$ and $\sigma'$ are the same planted trees, otherwise the maps are not isomorphic.
This can be easily implemented in linear time.

\heading{Summary of the algorithm.}
The input of the whole algorithm is a non-oriented map $N = (F,\lambda,\rho,\tau)$.
First, we compute its Euler characteristic by performing a breadth-first search.
Then, we test whether it is orientable using Theorem~\ref{thm:orientable}.

If $N$ is orientable, then we construct the associated oriented maps $M = (D,R,L)$ and $M\inv = (D,R\inv,L)$.
We use the algorithms from Section~\ref{sec:reductions} and~\ref{sec:uniform} to compute $\Auto(M)$ and to find any $\varphi \in \Isoo(M,M\inv)$.
The group $\Aut(N)$ is reconstructed from $\Auto(M)$ and $\varphi$.

If $N$ is not orientable, then we construct the associated antipodal double-cover $\widetilde{M} = (F.\rho\tau,\tau\lambda)$, which is an oriented map, and use again algorithms from Section~\ref{sec:reductions} and~\ref{sec:uniform} to compute the generators of $\Auto(\widetilde{M})$.
Finally, we use the algorithms from Section~\ref{sec:nonorientable} to find the subgroup of $\Auto(\widetilde{M})$ which is equal to $\Aut(N)$.

\heading{Finding all isomorphisms between two maps.}
Our algorithm can be easily adapted for the problem of finding all isomorphisms between two maps $M_1$ and $M_2$, using the relation $\Iso(M_1,M_2) = \Aut(M_1)\varphi$, where $\varphi\colon M_1\to M_2$ is an arbitrary isomorphism.
To compute $\varphi$, we find the reduction for the maps $M_1$, $M_2$, and $M_2\inv$.
Testing isomorphism of the reduced maps can be done by a brute-force algorithm if they do not belong to an infinite family.
Otherwise, the Euler characteristic is non-negative.
For the sphere or the projective plane, we apply the algorithm of Section~\ref{sec:uniform}.
For the torus and the Klein bottle, the described algorithms can be easily adapted.

\bibliographystyle{plain}
\bibliography{aut_maps}

\begin{thebibliography}{10}

\bibitem{Altschuler}
Amos Altshuler.
\newblock Construction and enumeration of regular maps on the torus.
\newblock {\em Discrete Math.}, 4:201--217, 1973.

\bibitem{babai_vertex_transitive}
L{\'a}szl{\'o} Babai.
\newblock Vertex-transitive graphs and vertex-transitive maps.
\newblock {\em Journal of graph theory}, 15(6):587--627, 1991.

\bibitem{babai_quasipolynomial}
L{\'a}szl{\'o} Babai.
\newblock Graph isomorphism in quasipolynomial time.
\newblock In {\em Proceedings of the forty-eighth annual ACM symposium on
  Theory of Computing}, pages 684--697. ACM, 2016.

\bibitem{babai_eigenvalue}
L{\'a}szl{\'o} Babai, D~Yu Grigoryev, and David~M Mount.
\newblock Isomorphism of graphs with bounded eigenvalue multiplicity.
\newblock In {\em Proceedings of the fourteenth annual ACM symposium on Theory
  of computing}, pages 310--324. ACM, 1982.

\bibitem{Biggs_White}
N.~L. Biggs and A.~T. White.
\newblock {\em Permutation groups and combinatorial structures}, volume~33.
\newblock Cambridge University Press, 1979.

\bibitem{simultaneous_lower_bound}
A.~Brodnik, A.~Malni{\v{c}}, and R.~Po{\v{z}}ar.
\newblock Lower bounds on the simultaneous conjugacy problem in the symmetric
  group.
\newblock In {\em 4th annual Mississippi Discrete Mathematics Workshop}, 2015.

\bibitem{rok_simultaneous}
A.~Brodnik, A.~Malni{\v{c}}, and R.~Po{\v{z}}ar.
\newblock Fast permutation-word multiplication and the simultaneous conjugacy
  problem.
\newblock {\em arXiv preprint arXiv:1907.07889}, 2019.

\bibitem{colbourn_booth}
Charles~J Colbourn and Kellogg~S Booth.
\newblock Linear time automorphism algorithms for trees, interval graphs, and
  planar graphs.
\newblock {\em SIAM Journal on Computing}, 10(1):203--225, 1981.

\bibitem{coxeter2013generators}
Harold~SM Coxeter and William~OJ Moser.
\newblock {\em Generators and relations for discrete groups}, volume~14.
\newblock Springer Science \& Business Media, 2013.

\bibitem{Fisk77}
Steve Fisk.
\newblock Geometric coloring theory.
\newblock {\em Advances in Math.}, 24(3):298--340, 1977.

\bibitem{fontet_simultaneous}
Max Fontet.
\newblock Calcul de centralisateur d’un groupe de permutations.
\newblock {\em Bull. Soc. Math. France M{\'e}m.,(49-50)}, pages 53--63, 1977.

\bibitem{schweitzer_valence}
Martin Grohe, Daniel Neuen, and Pascal Schweitzer.
\newblock A faster isomorphism test for graphs of small degree.
\newblock In {\em 2018 IEEE 59th Annual Symposium on Foundations of Computer
  Science (FOCS)}, pages 89--100. IEEE, 2018.

\bibitem{tucker_gross}
Jonathan~L Gross and Thomas~W Tucker.
\newblock {\em Topological graph theory}.
\newblock Dover, 2001.

\bibitem{GY2013}
Jonathan~L Gross, Jay Yellen, and Ping Zhang.
\newblock {\em Handbook of graph theory}.
\newblock Chapman and Hall/CRC, 2013.

\bibitem{grunbaum}
Branko Gr{\"u}nbaum and Geoffrey~Colin Shephard.
\newblock {\em Tilings and patterns}.
\newblock Freeman, 1987.

\bibitem{hoffmann_simultaneous}
Christoph~M Hoffmann.
\newblock Subcomplete generalizations of graph isomorphism.
\newblock {\em Journal of Computer and System Sciences}, 25(3):332--359, 1982.

\bibitem{HT1973}
John~E. Hopcroft and Robert~Endre Tarjan.
\newblock Av log v algorithm for isomorphism of triconnected planar graphs.
\newblock {\em Journal of Computer and System Sciences}, 7(3):323--331, 1973.

\bibitem{wong}
John~E. Hopcroft and J.K. Wong.
\newblock Linear time algorithm for isomorphism on planar graphs.
\newblock In {\em Proc. 6th Annual ACM Symp. on Theory of Computing.}, 1974.

\bibitem{jones_singerman}
Gareth~A Jones and David Singerman.
\newblock Theory of maps on orientable surfaces.
\newblock {\em Proceedings of the London Mathematical Society}, 3(2):273--307,
  1978.

\bibitem{kawarabayashi_genus}
Ken-ichi Kawarabayashi.
\newblock Graph isomorphism for bounded genus graphs in linear time.
\newblock {\em arXiv preprint arXiv:1511.02460}, 2015.

\bibitem{KM2008}
Ken-ichi Kawarabayashi and Bojan Mohar.
\newblock Graph and map isomorphism and all polyhedral embeddings in linear
  time.
\newblock In {\em Proceedings of the fortieth annual ACM symposium on Theory of
  computing}, pages 471--480. ACM, 2008.

\bibitem{pilipczuk_treewidth}
Daniel Lokshtanov, Marcin Pilipczuk, Micha{\l} Pilipczuk, and Saket Saurabh.
\newblock Fixed-parameter tractable canonization and isomorphism test for
  graphs of bounded treewidth.
\newblock {\em SIAM Journal on Computing}, 46(1):161--189, 2017.

\bibitem{luks_valence}
Eugene~M Luks.
\newblock Isomorphism of graphs of bounded valence can be tested in polynomial
  time.
\newblock {\em Journal of computer and system sciences}, 25(1):42--65, 1982.

\bibitem{Ma1971}
Peter Mani.
\newblock Automorphismen von polyedrischen {G}raphen.
\newblock {\em Mathematische Annalen}, 192(4):279--303, 1971.

\bibitem{mathon}
Rudolf Mathon.
\newblock A note on the graph isomorphism counting problem.
\newblock {\em Information Processing Letters}, 8(3):131--136, 1979.

\bibitem{miller}
Gary Miller.
\newblock Isomorphism testing for graphs of bounded genus.
\newblock In {\em Proceedings of the twelfth annual ACM symposium on Theory of
  computing}, pages 225--235. ACM, 1980.

\bibitem{mohar_thomassen}
Bojan Mohar and Carsten Thomassen.
\newblock {\em Graphs on surfaces}, volume~10.
\newblock Johns Hopkins University Press, 2001.

\bibitem{nedela_double_cover}
Roman Nedela and Martin \v{S}koviera.
\newblock Exponents of orientable maps.
\newblock {\em Proceedings of the London Mathematical Society}, 75(1):1--31,
  1997.

\bibitem{pultr_trnkova}
Ale{\v{s}} Pultr and V{\v{e}}ra Trnkov{\'a}.
\newblock {\em Combinatorial, algebraic and topological representations of
  groups, semigroups and categories}, volume 1980.
\newblock Prague, 1980.

\bibitem{schoning}
Uwe Sch{\"o}ning.
\newblock Graph isomorphism is in the low hierarchy.
\newblock {\em Journal of Computer and System Sciences}, 37(3):312--323, 1988.

\bibitem{such_torus}
Ondrej {\v{S}}uch.
\newblock Vertex-transitive maps on a torus.
\newblock {\em Acta Math. Univ. Comenianae}, 80(1):1--30, 2011.

\bibitem{Thomassen}
Carsten Thomassen.
\newblock Tilings of the torus and the {K}lein bottle and vertex-transitive
  graphs on a fixed surface.
\newblock {\em Trans. Amer. Math. Soc.}, 323(2):605--635, 1991.

\bibitem{whitney}
Hassler Whitney.
\newblock 2-isomorphic graphs.
\newblock {\em American Journal of Mathematics}, 55(1):245--254, 1933.

\end{thebibliography}

\end{document}